\newtheorem{theorem}{Theorem}[section]
\newtheorem{lemma}[theorem]{Lemma}
\newtheorem{proposition}[theorem]{Proposition}
\newtheorem{corollary}[theorem]{Corollary}
\newtheorem{example}[theorem]{Example}
\newtheorem{definition}[theorem]{Definition}
\theoremstyle{definition}
\newcommand\tr{ \mbox{Tr} }
\newcommand\povm[2]{\operatorname{POVM}_{{#1}}(#2)}
\newcommand\qpm[2]{\operatorname{POVM}_{\mathcal{#1}}^1(#2)}
\newcommand{\braket}[2]{\langle #1 | #2 \rangle}
\begin{document}

\title{Bistochastic operators and quantum random variables}

\author[Sarah Plosker]{Sarah Plosker \textsuperscript{1}}

\author[Christopher Ramsey]{Christopher Ramsey \textsuperscript{1,2}}

\thanks{\textsuperscript{1}Department of Mathematics and Computer Science, Brandon University,
Brandon, MB R7A 6A9, Canada}
\thanks{\textsuperscript{2}Department of Mathematics and Statistics, MacEwan University, Edmonton, AB T5J 4S2, Canada}

\keywords{positive operator valued measure (POVM),  quantum probability measure, quantum random variable,  Radon-Nikod\'ym derivative, Bistochastic operator, majorization}
\subjclass[2010]{ 
46B22, 
46C50,   	
46G10, 
 47G10, 
 81P15 
 }


\maketitle

\begin{abstract}    
Given a positive operator-valued measure $\nu$ acting on the Borel sets of a locally compact Hausdorff space $X$, with outcomes in the algebra $\mathcal B(\mathcal H)$ of all bounded operators on a (possibly infinite-dimensional) Hilbert space $\mathcal H$, one can consider $\nu$-integrable functions  $X\rightarrow \mathcal B(\mathcal H)$ that are positive quantum random variables. 
We define a seminorm on the span of such functions which in the quotient leads to a Banach space. We consider bistochastic operators acting on this space and majorization of quantum random variables is then defined with respect to these operators. 
As in classical majorization theory, we relate majorization in this context to an inequality involving all possible convex functions of a certain type. Unlike the classical setting, continuity and convergence issues arise throughout the work.    
\end{abstract}

\maketitle
\tableofcontents

\section{Introduction} 
In this work, we provide some mathematical---specifically, operator theoretic---foundational underpinnings to positive operator-valued measures and functions that are integrable with respect to these objects. It is our desire to make connections between quantum information theory and operator theory and so we first introduce the quantum context and then the pure mathematical context below.

A physical system in quantum mechanics is described mathematically by a complex separable Hilbert space $\mathcal{H}$. The observable properties of the physical system are represented by a positive, operator-valued measure (or the normalized version of such an object, called a quantum probability measure), which arises from the measurement theoretical analysis of quantum mechanics: If a quantum system undergoes a series of preparation procedures so that it is in state $\rho$, measurements are made, giving rise to a set $X$ of  outcomes, with associated $\sigma$-algebra   $\mathcal{O}(X)$   of Borel sets of $X$. One then considers a  map $\nu$ acting on $\mathcal{O}(X)$ and taking values in  the positive cone of bounded operators acting on $\mathcal H$, with the property that it is ultraweakly countably additive. The measurement outcome statistics associated to $\nu$ are given by the induced complex measure $\nu_\rho$, defined by $\nu_\rho(E) = \tr(\rho \nu(E))$ for all $E\in \mathcal O(X)$. 
See \cite{BuschOp} for a systematic presentation of the probabilistic structure of quantum mechanics. 

In more mathematical terms, we are concerned with positive operator-valued measures from the Borel sets of a locally compact Hausdorff space $X$ into $\mathcal B(\mathcal H)$ for a finite or separable Hilbert space $\mathcal H$. One considers the so-called quantum random variables with respect to $\nu$, that is, measurable functions $\psi: X\rightarrow \mathcal B(\mathcal H)$ and their integrals against $\nu$. The motivation behind this is stated in \cite{FK} as the desire for a notion of an operator-valued averaging, i.e., the quantum expected value of $\psi$. This theory has been developed in \cite{MPR} and \cite{PRLyapunov}. Other variants of this setup in the literature include \cite{pol} 
and \cite{chiri2007, chiri2010}, the latter pointing out the connection between regular operator-valued expectations and quantization maps in geometric quantization. 

One of the main goals of this paper is to bring a theorem of Komiya's \cite{Komiya} into this context. Komiya proves, for matrix majorization, that for $X, Y\in M_{m,n}(\mathbb C)$ that $X \prec Y$ if and only if $\psi(X) \leq \psi(Y)$ for every real-valued, permutation-invariant, convex function $\psi$ on $M_{m,n}(\mathbb C)$. This is shown by using the fact that the bistochastic matrices are the convex hull of the permutation matrices. Our main theorem then is that one quantum random variable is majorized by another if and only if the evaluation of the first is always smaller than or equal to the second under every real-valued, permutation-invariant, convex function. This goal is the driving force of the paper, since to define majorization one needs bistochastic operators, which in turn need a robust $L^1$ function theory. However, while this $L^1$ theory can be established in general, we are only able to introduce bistochastic operators in a much more limited way, specifically for classical measures, $\mu I_\mathcal H$, where the bistochastic operators are inherited from the classical $L^1(X,\mu)$. Even so, this variation of Komiya's theorem takes some work to develop. The more general question to define and characterize majorization in arbitrary $L^1$ spaces of quantum random variables is not discussed.

There is a rich area of study in non-commutative majorization. Besides Komiya's discussions of matrix majorization \cite{Komiya} there are many directions of majorization of operators discussed by Hiai \cite{Hiai}. Of particular interest, is the log-majorization (Araki-Lieb-Thirring inequality) of Kosaki \cite{Kosaki} on noncommutative $L^p$-spaces over arbitrary von Neumann algebras, developed by Haagerup \cite{Haagerup}. As far as the authors are aware, the $L^1$ space defined in this paper is unrelated to that of Haagerup.

The paper is organized as follows: In Section \ref{sec:prelim} we review  operator-valued measures and majorization. In Section \ref{sec:L1Norm}, we consider the span of POVM-integrable quantum random variables,  revealing certain natural candidates for a norm to be unsuitable before defining a seminorm which leads to a good choice for the $L^1$ functions into $\mathcal B(\mathcal H)$ which we show has desirable properties.  In Section \ref{sec:boundedmult}, we define a $\mathcal B(\mathcal H)$-valued bracket (a generalized inner product) between elements of our constructed $L^1$ space and the von Neumann algebra of essentially bounded quantum random variables, focusing on the issue of when multiplication of elements of these two sets yields a bounded operator. This leads to a weak topology. In Section \ref{sec:Bistoch}, prompted by classical majorization on vectors in $\mathbb{R}^n$ as well as majorization in the $L^1([0,1])$ setting, where majorization is equivalent to the existence of a stochastic or doubly stochastic matrix (depending on the context), we consider bistochastic operators on  the aforementioned Banach space.   In Section \ref{sec:MD}, we define majorization in this new context and prove the analogue of Komiya's theorem that was discussed above. 


\section{Preliminaries}\label{sec:prelim}
\subsection{Positive operator-valued measures}

Let $\mathcal B(\mathcal H)$ be the algebra of all bounded operators on $\mathcal H$ for a finite-dimensional or separable Hilbert space $\mathcal H$. 
Define $\mathcal{T}(\mathcal{H})$ as the Banach space  of all trace-class operators:  all operators  in $\mathcal B(\mathcal H)$ which have a finite trace under any orthonormal basis; the norm of this space is the trace norm $\|T\|_1=\tr\sqrt{T^*T}$. Note that the dual of $\mathcal T(\mathcal H)$ is 
$\mathcal B(\mathcal H)$:  $\mathcal T(\mathcal H)^* = \mathcal{B}(\mathcal H)$, with the duality given by $\braket{T}{A}=\tr(TA)$ for all $T\in \mathcal T(\mathcal H)$ and all $A\in \mathcal B(\mathcal H)$. The set $\mathcal S(\mathcal H)$ of all positive, trace-one trace-class operators $\rho$ (called \emph{states} or density operators) is a convex subset of $\mathcal T(\mathcal H)$. It should be emphasized here to avoid confusion that in infinite dimensions there are many states (unital, positive, linear functionals) that do not arise in this way. However, it should be noted that $\mathcal S(\mathcal H)$ is still separating for $\mathcal B(\mathcal H)$. See \cite[Chapter 1]{Holevo} for further details of these fundamental mathematical underpinnings of quantum theory. 

Throughout,   $X$ is a locally compact Hausdorff space and  $\mathcal{O}(X)$   is the $\sigma$-algebra of Borel sets of $X$.

%

\begin{definition}\cite{Larson, Paulsen, MPR}
A map $\nu : \mathcal{O}(X) \to \mathcal{B}(\mathcal{H})$ is an \emph{operator-valued measure (OVM)} if it is ultraweakly countably additive: for every countable collection $\{E_k\}_{k \in \mathbb N} \subseteq \mathcal{O}(X)$ with $E_i \cap E_j = \emptyset$ for $i \neq j$ we have
\[
\nu\left(\bigcup_{k\in \mathbb N} E_k \right) = \sum_{k \in \mathbb N} \nu(E_k)\,,
\]
where the convergence on the right side of the equation above is with respect to the ultraweak topology of $\mathcal{B}(\mathcal{H})$, that is,
\[
\tr\left(s \sum_{k=1}^n \nu(E_k)\right) \rightarrow \tr\left(s \sum_{k=1}^\infty \nu(E_k)\right), \quad \forall s\in \mathcal S(\mathcal H).
\] 
An OVM  $\nu$ is 
\begin{enumerate}[(i)]
    \item \emph{bounded} if $\sup\{\|\nu(E)\| : E\in \mathcal O(X)\} < \infty$,
    \item \emph{positive} if $\nu(E) \in \mathcal{B}(\mathcal H)_+$, for all $E\in \mathcal O(X)$; such an OVM is  called a \emph{positive operator-valued measure} (POVM),
   \item \emph{regular} if the induced complex measure $\tr(\rho\nu(\cdot))$ is regular for every $\rho\in \mathcal T(\mathcal H)$.
   \item a \emph{positive operator-valued probability measure}  or \emph{quantum probability measure} if it is positive and $\nu(X) = I_{\mathcal{H}}$. 
\end{enumerate}
\end{definition}

Note:  A POVM is necessarily bounded. Often the word \emph{observable} is used interchangeably with POVM \cite{BuschOp}, although occasionally it is used to refer to normalized POVMs \cite[Chapter 3]{Davies}. We also note that some authors consider normalization as part of the definition of a POVM \cite{BuschQT}. To avoid confusion, we use the terminology \emph{quantum probability measure} to refer to a normalized POVM, which is consistent with e.g.\ \cite{clean4, FK, FKP, MPR}. We adopt the notation $\povm{\mathcal H}{X}$ to refer to the set of all POVMs 
 $\nu : \mathcal{O}(X) \to \mathcal{B}(\mathcal{H})_+$ and $\qpm{\mathcal H}{X}$ to refer to the set of all quantum probability measures $\nu : \mathcal{O}(X) \to \mathcal{B}(\mathcal{H})_+$.

A (classical or operator-valued) measure $\omega_1$ is \emph{absolutely continuous} with respect to either a classical or operator-valued measure $\omega_2$, denoted $\omega_1\ll_{\rm ac}\omega_2$, if $\omega_1(E)=0$ whenever $\omega_2(E)=0$, where $E\in\mathcal{O}(X)$ (for classical measures, $\mathcal{O}(X)$ is typically denoted by $\Sigma$ in the literature) and 0 is interpreted as either the scalar zero or the zero operator, as applicable.   Let $\nu\in \povm{\mathcal H}{X}$.  For a fixed state $\rho\in \mathcal S(\mathcal H)$, the induced complex measure $\nu_\rho$ on $X$ is defined by $\nu_\rho(E) = \tr(\rho \nu(E))$ for all $ E\in \mathcal O(X)$. 
As discussed in \cite{MPR}, $\nu$ and $\nu_\rho$ are  mutually absolutely continuous for any full-rank $\rho\in \mathcal S(\mathcal H)$.

Let $\nu_{i,j}$ be the complex measure defined by $\nu_{i,j}(E) = \langle \nu(E)e_j,e_i\rangle, E\in \mathcal O(X)$, where $\{e_k\}$ form an orthonormal basis for $\mathcal H$. Let  $\rho\in\mathcal S(\mathcal H)$ be full-rank, that is, injective. Then  $\nu_{i,j} \ll_{\rm ac} \nu_\rho$ and so, by the classical Radon-Nikod\'ym theorem, there is a unique $\frac{d\nu_{i,j}}{d\nu_\rho} \in L_1(X, \nu_\rho)$ such that
\[
\nu_{i,j}(E) = \int_E \frac{d\nu_{i,j}}{d\nu_\rho} d\nu_\rho, \ E\in \mathcal O(X).
\]
One can then define the  {\em Radon-Nikod\'ym derivative} of $\nu$ with respect to $\nu_\rho$   to be \cite{MPR, FPS}
\[
\frac{d\nu}{d\nu_\rho} = \sum_{i,j\geq 1} \frac{d\nu_{i,j}}{d\nu_\rho} \otimes e_{i,j}.
\]

An operator-valued function  $f: X \rightarrow \mathcal{B}(\mathcal H)$ that is Borel measurable (that is,  the associated complex-valued functions $x\to \tr(s f(x))$
are Borel measurable functions for every state $s \in \mathcal S(\mathcal H)$)    is known as a {\em quantum random variable}. 
The Radon-Nikod\'ym derivative  $\frac{d\nu}{d\nu_\rho}$ is said to exist if it is a quantum random variable; i.e.\ it takes every $x$ to a bounded operator. By \cite[Corollary 2.13]{MPR}, if $\frac{d\nu}{d\nu_{\rho_0}}$ exists for some full-rank   $\rho_0\in \mathcal S(\mathcal H)$, then $\frac{d\nu}{d\nu_{\rho}}$ exists for all full-rank  $\rho\in \mathcal S(\mathcal H)$, so there is no need to specify a particular full-rank $\rho_0$.

Integrability of a quantum random variable with respect to a positive operator-valued measure is defined as follows. 
\begin{definition}\cite{FPS, MPR}
Let $\nu : \mathcal O(X) \rightarrow \mathcal B(\mathcal H)$ be a POVM such that $\frac{d\nu}{d\nu_\rho}$ exists, for a full-rank $\rho \in \mathcal S(\mathcal H)$.
A positive quantum random variable $f: X \rightarrow \mathcal{B}(\mathcal H)$ is  {\em $\nu$-integrable} if the function
\[
f_s(x) = \tr\left( s\left(\frac{d\nu}{d\nu_\rho}(x) \right)^{1/2} f(x) \left(\frac{d\nu}{d\nu_\rho}(x) \right)^{1/2} \right) 
\]
is $\nu_\rho$-integrable for every state $s\in \mathcal S(\mathcal H)$. 
If $f$ is $\nu$-integrable  then the integral of $f$ with respect to $\nu$, denoted $\int_X f d\nu$, is implicitly defined by the formula
\[
\tr\left( s\int_X f d\nu\right) = \int_X f_s d\nu_\rho.
\]
\end{definition} 

If $\nu = \mu I_\mathcal H$ for a positive complex measure $\mu$ then we know that $\frac{d\nu}{d\nu_\rho} = I_\mathcal H$ and if $f = [f_{i,j}]$ is taken with respect to an orthonormal basis in $\mathcal H$ then integration is defined entrywise:
\[
\int_X f d\nu = \left[ \int_X f_{i,j} d\mu \right].
\]

The properties of the integral are explored in depth in \cite{MPR, PRLyapunov}. 

Note that any   quantum random variable $f: X \rightarrow \mathcal{B}(\mathcal H)$ can be decomposed as the sum of four positive quantum random variables, for instance $({\rm Re} f)_+, ({\rm Re} f)_-, ({\rm Im} f)_+$, and $({\rm Im} f)_-$ but other choices as well; the definition of  $\nu$-integrable can thus be extended to arbitrary quantum random variables provided all four positive functions are $\nu$-integrable.


\subsection{Majorization}
Majorization is a preorder first defined on vectors in $\mathbb R^n$.
Let $x, y\in \mathbb{R}^n$. Then $x$ is \emph{majorized} by $y$, denoted $x\prec y$, if \begin{eqnarray*}
\sum_{j=1}^{k}x^{\downarrow}_{j}\leq \sum_{j=1}^{k}y^{\downarrow}_{j}\quad \forall k\in \{1,\dots,n-1\}
\end{eqnarray*}
with equality when $k=n$, where $x$ has been reordered so that $x^{\downarrow}_{1}\geq x^{\downarrow}_{2}\geq \cdots \geq x^{\downarrow}_{n}$ (and similarly for $y$). Alternatively, $x\prec y$ if and only if there exists a doubly stochastic matrix $S$ such that $x=Sy$ (this is a well-known result of Hardy-Littlewood-P\'{o}lya \cite[Theorem 8]{HLP}).

One can define continuous majorization in the context of functions in $L^1$:

\begin{definition}\label{def:classicdecarrange}
Let $(X, \mathcal O(X), \mu)$ be a finite positive measure space and $f\in L^1(X, \mu)$. The \emph{distribution function} of $f$ is $d_f : \mathbb R \rightarrow [0,\mu(X)]$ defined by
\[
d_f(s) = \mu(\{x : f(x) > s\})
\]
and the \emph{decreasing rearrangement} of $f$ is $f^\downarrow : [0,\mu(X)] \rightarrow \mathbb R$ defined by
\[
f^\downarrow(t) = \sup\{ s : d_f(s) \geq t\}.
\]
\end{definition}

\begin{definition}\label{def:cont} Let $(X_i, \mathcal{O}(X_i), \mu_i)$, $i=1,2$, be finite measure spaces for which $a=\mu_1(X_1)=\mu_2(X_2)$. Then $f\in L^1(X_1,\mu_1)$ is \emph{majorized} by $g\in L^1(X_2, \mu_2)$, denoted $f\prec g$, 
if 
\begin{eqnarray*}
\int_0^t f^\downarrow d x&\leq & \int_0^t g^\downarrow d x\quad \forall\, 0\leq  t\leq  a\\
\textnormal{and }\int_0^a g^\downarrow d x&=& \int_0^a f^\downarrow d x, 
\end{eqnarray*}
where integration is against Lebesgue measure.
\end{definition}
This is also called the \emph{strong spectral order} \cite{Chong}. As in the vector case, majorization is related to a certain class of operators. In particular, an operator $B: L^1(X_1,\mu_1) \rightarrow L^1(X_2,\mu_2)$ between finite measure space where $\mu_1(X_1) = \mu_2(X_2)$ is called \textit{bistochastic, doubly stochastic}, or \textit{Markov}, if 
\begin{enumerate}
    \item $B$ is positive
    \item $\displaystyle{\int_{X_2} Bf d\mu_2 = \int_{X_1} f d\mu_1}$, \ and
    \item B1 = 1
\end{enumerate}
where $1$ here refers to the constant function 1 in each of the spaces $L^1(X_i, \mu_i), i=1,2$.

The following is a combination of the well-known result by Hardy-Littlewood-P\'{o}lya \cite[Theorem 10]{HLP} extended by Chong \cite[Theorem 2.5]{Chong} and that of Ryff \cite{Ryff1965} and Day \cite{Day}

\begin{theorem}\label{thm:continuousmajorization}
Let $(X_i, \mathcal{O}(X_i), \mu_i)$, $i=1,2$, be finite measure spaces for which $\mu_1(X_1)=\mu_2(X_2)$.
If $f\in L^1(X_1, \mu_1)$ and $g\in L^1(X_2, \mu_2)$ then the following are equivalent:
\begin{itemize}
\item $f \prec g$

\item $\displaystyle{\int_{X_1} \psi(f(x))dx \leq \int_{X_2} \psi(g(x))dx}$
for all convex functions $\psi : \mathbb R\rightarrow \mathbb R$
\\
\item There is a bistochastic operator $B$ such that $Bg = f$.
\end{itemize}
\end{theorem}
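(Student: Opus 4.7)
The plan is to prove the cycle $(3) \Rightarrow (2) \Rightarrow (1) \Rightarrow (3)$.

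For $(3) \Rightarrow (2)$, the key tool is Jensen's inequality applied to the bistochastic operator $B$. Since $B$ is positive, bounded on $L^1$, and satisfies $B 1 = 1$, it admits (or can be approximated by) an integral representation against a Markov kernel, so a pointwise Jensen inequality yields $\psi(f(x)) = \psi((Bg)(x)) \leq (B(\psi \circ g))(x)$ for every convex $\psi : \mathbb{R} \to \mathbb{R}$. Integrating over $X_1$ and invoking the measure-preservation property (2) of a bistochastic operator gives $\int_{X_1} \psi(f)\, d\mu_1 \leq \int_{X_2} \psi(g)\, d\mu_2$.

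For $(2) \Rightarrow (1)$, I would specialize to the convex cutoff functions $\psi_c(t) = (t-c)_+$ for $c \in \mathbb{R}$ and to $\psi(t) = \pm t$. The layer-cake identity $\int_{X_i} (h - c)_+ \, d\mu_i = \int_0^{\mu_i(X_i)} (h^\downarrow(x) - c)_+ \, dx$, which follows directly from Definition \ref{def:classicdecarrange}, turns the hypothesis into $\int_0^a (f^\downarrow - c)_+ \, dx \leq \int_0^a (g^\downarrow - c)_+ \, dx$ for every $c$, while the choice $\psi(t) = \pm t$ yields the required equality $\int_0^a f^\downarrow \, dx = \int_0^a g^\downarrow \, dx$. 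A standard Hardy-lemma argument (take $c = g^\downarrow(t)$, bound $f^\downarrow - c$ by $(f^\downarrow - c)_+$ on $[0,t]$, and combine with the full-integral equality on $[t,a]$) then extracts the partial inequalities $\int_0^t f^\downarrow \, dx \leq \int_0^t g^\downarrow \, dx$ for all $t \in [0,a]$, giving $f \prec g$.

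The main work is $(1) \Rightarrow (3)$. The strategy is a sandwich construction that reduces to Ryff's theorem on $L^1([0,a])$. First, build bistochastic operators $B_2 : L^1(X_2, \mu_2) \to L^1([0,a])$ with $B_2 g = g^\downarrow$ and $B_1 : L^1([0,a]) \to L^1(X_1, \mu_1)$ with $B_1 f^\downarrow = f$; these are obtained from a measure-theoretic realization of the decreasing rearrangement, implemented by suitable measure-preserving maps or conditional expectations (if necessary, after passing to the enlarged space $X_i \times [0,1]$ with the product measure in order to split atoms). Next, invoke Ryff's theorem on $L^1([0,a])$, which furnishes a bistochastic $R : L^1([0,a]) \to L^1([0,a])$ with $R g^\downarrow = f^\downarrow$, using the majorization $f^\downarrow \prec g^\downarrow$ already established on the interval. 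Finally set $B := B_1 \, R \, B_2$; a composition of bistochastic operators is bistochastic (each of positivity, measure preservation, and $B1 = 1$ is preserved by composition), and by construction $Bg = f$.

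The hardest step is Ryff's theorem itself, together with the careful handling of the decreasing rearrangement as a measure-preserving operation. Ryff's proof shows that $\{h \in L^1([0,a]) : h \prec g^\downarrow\}$ is the $L^1$-weakly closed convex hull of the orbit of $g^\downarrow$ under measure-preserving bijections (with the extreme points identified as precisely those rearrangements), from which the bistochastic $R$ is produced; Chong's alternative realizes $f^\downarrow$ as a conditional expectation of $g^\downarrow$ against a suitably chosen filtration. Atoms in $\mu_i$ are the only genuine measure-theoretic nuisance, and they are handled by the product-space trick above. Since the statement is classical, in the actual exposition one would cite Hardy-Littlewood-P\'olya, Chong, Ryff, and Day for the detailed arguments and focus only on confirming that the framework is compatible with what follows.
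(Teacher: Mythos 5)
The paper gives no proof of this theorem: it is quoted as a known classical result attributed to Hardy--Littlewood--P\'olya, Chong, Ryff, and Day, and your outline is a correct reconstruction of exactly those standard arguments (Jensen's inequality for positive unital $L^1$-operators, the Hardy lemma via the cutoffs $(t-c)_+$ together with $\psi(t)=\pm t$, and Ryff's theorem conjugated by rearrangement operators, with atoms split by passing to $X_i\times[0,1]$). So your approach matches the one the paper defers to, and I see no genuine gap beyond the standard care you already flag.
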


\section{The L$^1$-norm}\label{sec:L1Norm}

We wish to find a generalization of the L$^1$-norm in the POVM context. Recall that $X$ is a locally compact Hausdorff space and $\mathcal H$ is finite-dimensional or separable. First, we consider the following inequalities.

\begin{lemma}
Suppose $\nu \in \povm{\mathcal H}{X}$ such that $\frac{d\nu}{d\nu_\rho}$ exists and $f : X \rightarrow \mathcal B(\mathcal H)$ is $\nu$-integrable. Then
\[
\left\| \int_X f(x) d\nu(x)\right\| \leq \int_X \|f(x)\|\left\|\frac{d\nu}{d\nu_\rho}(x)\right\| d\nu_\rho(x).
\]
Furthermore, if $\nu = \mu I$ where $\mu$ is a positive classical measure on $X$ then
\[
\left\| \int_X f(x) d\nu(x)\right\| \leq \int_X \|f(x)\| d\mu(x).
\]
\end{lemma}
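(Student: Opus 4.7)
The natural approach is to use the duality $\mathcal{T}(\mathcal{H})^* = \mathcal{B}(\mathcal{H})$, giving $\|T\| = \sup\{|\tr(tT)| : t \in \mathcal{T}(\mathcal{H}),\ \|t\|_1 \leq 1\}$ for every $T \in \mathcal{B}(\mathcal{H})$. Applying this to $T = \int_X f\, d\nu$, the first inequality will follow once we can bound $|\tr(t \int_X f\, d\nu)|$ uniformly in $t$ with $\|t\|_1 \leq 1$.

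To exploit this, I first need to extend the defining identity $\tr(s \int_X f\, d\nu) = \int_X f_s\, d\nu_\rho$, stated in the Definition only for positive $f$ and states $s$, to arbitrary $\nu$-integrable quantum random variables and arbitrary trace-class $t$. The first extension uses the decomposition $f = ({\rm Re}\,f)_+ - ({\rm Re}\,f)_- + i(({\rm Im}\,f)_+ - ({\rm Im}\,f)_-)$ together with the bilinearity of both sides in $f$. The second extension uses that every $t \in \mathcal{T}(\mathcal{H})$ is a complex linear combination of four positive trace-class operators (each a non-negative scalar multiple of a state) and linearity in $t$. The resulting identity reads $\tr(t \int_X f\, d\nu) = \int_X f_t\, d\nu_\rho$, where
\[
f_t(x) = \tr\!\left( t \left(\tfrac{d\nu}{d\nu_\rho}(x)\right)^{1/2} f(x) \left(\tfrac{d\nu}{d\nu_\rho}(x)\right)^{1/2} \right).
\]

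Now the standard trace-class/operator inequality $|\tr(tA)| \leq \|t\|_1 \|A\|$, combined with the sandwich bound $\|A^{1/2} B A^{1/2}\| \leq \|A\|\,\|B\|$ for positive $A$, yields the pointwise estimate
\[
|f_t(x)| \leq \|t\|_1 \left\|\tfrac{d\nu}{d\nu_\rho}(x)\right\| \|f(x)\|.
\]
Integrating against $\nu_\rho$ and taking the supremum over $\|t\|_1 \leq 1$ gives the first inequality. The second follows immediately: in the classical case $\nu = \mu I_{\mathcal{H}}$ the excerpt notes $\frac{d\nu}{d\nu_\rho} = I_{\mathcal{H}}$, so the norm factor equals $1$, and $\nu_\rho(E) = \tr(\rho\,\mu(E) I_{\mathcal{H}}) = \mu(E)$, so $\nu_\rho = \mu$. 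The only step that requires any care is the bookkeeping for the linearity extension of the integration formula from states and positive $f$ to arbitrary trace-class pairings; everything else is direct duality and elementary operator-norm inequalities.
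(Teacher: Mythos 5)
Your proposal is correct and follows essentially the same route as the paper: express the operator norm of $\int_X f\,d\nu$ by duality against the trace class, push the trace through the integral via the defining formula (extended by linearity in both $f$ and the trace-class argument), and bound pointwise using $|\tr(tA)|\leq\|t\|_1\|A\|$ together with $\bigl\|\bigl(\tfrac{d\nu}{d\nu_\rho}\bigr)^{1/2}f\bigl(\tfrac{d\nu}{d\nu_\rho}\bigr)^{1/2}\bigr\|\leq\|f\|\bigl\|\tfrac{d\nu}{d\nu_\rho}\bigr\|$. The only difference is that you take the supremum over the full unit ball of $\mathcal T(\mathcal H)$ rather than over states as the paper does, which is in fact the more careful choice, since $\sup_{s\in\mathcal S(\mathcal H)}|\tr(sT)|$ recovers $\|T\|$ only when $T$ is self-adjoint.
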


\begin{proof}
Recall that the dual norm on $\mathcal B(\mathcal H)$ induced by the predual is the operator norm.
One then calculates that
\begin{align*}
    \left\| \int_X f(x) d\nu(x)\right\|
   & \ = \sup_{s\in \mathcal S(\mathcal H)}  \left|\tr\left(s\int_X f(x) d\nu(x)\right)\right|
    \\ & \ = \sup_{s\in \mathcal S(\mathcal H)} \left|\int_X \tr\left( s \left(\frac{d\nu}{d\nu_\rho}(x)\right)^{1/2}f(x)\left(\frac{d\nu}{d\nu_\rho}(x)\right)^{1/2} \right)d\nu_\rho(x)\right|
    \\ & \ \leq \sup_{s\in\mathcal  S(\mathcal H)} \int_X \left|\tr\left( s \left(\frac{d\nu}{d\nu_\rho}(x)\right)^{1/2}f(x)\left(\frac{d\nu}{d\nu_\rho}(x)\right)^{1/2} \right)\right|d\nu_\rho(x)
    \\ & \ \leq \int_X \left\| \left(\frac{d\nu}{d\nu_\rho}(x)\right)^{1/2}f(x)\left(\frac{d\nu}{d\nu_\rho}(x)\right)^{1/2}\right\|d\nu_\rho(x)
    \\ & \ \leq \int_X \|f(x)\|\left\|\left(\frac{d\nu}{d\nu_\rho}(x)\right)^{1/2}\right\|^2d\nu_\rho(x)
\end{align*}
which establishes the desired inequality. The second inequality in the statement of the lemma is immediate after observing that for any full-rank $\rho$ one has that $\nu = \mu I$ implies that $\frac{d\nu}{d\nu_\rho} = I$.
\end{proof}

In the case of self-adjoint quantum random variables we can say slightly more, but the following lemma is unlikely to be true in general.

\begin{lemma}
Suppose $\nu \in \povm{\mathcal H}{X}$ such that $\frac{d\nu}{d\nu_\rho}$ exists and $f : X \rightarrow \mathcal B(\mathcal H)$ is $\nu$-integrable and self-adjoint. Then 
\[
\left\| \int_X f(x) d\nu(x)\right\| \leq \left\| \int_X \|f(x)\| I_\mathcal H d\nu(x)\right\|.
\]
\end{lemma}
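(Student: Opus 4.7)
The plan is to prove the stated operator-norm inequality by monotonicity of the integral together with the elementary fact that a self-adjoint operator squeezed between $-B$ and $B$ (for a positive $B$) has norm at most $\|B\|$.

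First I would record the pointwise operator inequalities
\[
-\|f(x)\| I_\mathcal H \ \leq \ f(x) \ \leq \ \|f(x)\| I_\mathcal H,
\]
which hold for every $x \in X$ since $f(x)$ is self-adjoint. Call $A := \int_X f\, d\nu$ and $B := \int_X \|f(x)\| I_\mathcal H\, d\nu$. Note $A = A^*$ and $B \geq 0$.

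The next step is to show that the $\nu$-integral is monotone: if $g_1, g_2 : X \to \mathcal B(\mathcal H)$ are self-adjoint $\nu$-integrable quantum random variables with $g_1(x) \leq g_2(x)$ pointwise, then $\int g_1\, d\nu \leq \int g_2\, d\nu$. This follows directly from the definition of the integral: for any state $s \in \mathcal S(\mathcal H)$ and any $x$,
\[
\operatorname{tr}\!\left( s \bigl(\tfrac{d\nu}{d\nu_\rho}\bigr)^{1/2}(x)\, g_1(x)\, \bigl(\tfrac{d\nu}{d\nu_\rho}\bigr)^{1/2}(x)\right) \leq \operatorname{tr}\!\left( s \bigl(\tfrac{d\nu}{d\nu_\rho}\bigr)^{1/2}(x)\, g_2(x)\, \bigl(\tfrac{d\nu}{d\nu_\rho}\bigr)^{1/2}(x)\right),
\]
because sandwiching preserves the operator order and $s$ is positive; integrating against $\nu_\rho$ and using that $\mathcal S(\mathcal H)$ separates $\mathcal B(\mathcal H)$ (and in particular separates self-adjoints by their numerical behaviour on states) gives $\int g_1\, d\nu \leq \int g_2\, d\nu$. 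Applying this to the pointwise inequalities above yields $-B \leq A \leq B$.

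Finally I would conclude using the elementary norm bound: for a self-adjoint $A$ and a positive $B$ with $-B \leq A \leq B$, one has $\|A\| \leq \|B\|$. Indeed, for every unit vector $\xi \in \mathcal H$, the sandwich $-\langle B\xi,\xi\rangle \leq \langle A\xi,\xi\rangle \leq \langle B\xi,\xi\rangle$ gives $|\langle A\xi,\xi\rangle| \leq \langle B\xi,\xi\rangle \leq \|B\|$, and since $A$ is self-adjoint, $\|A\| = \sup_{\|\xi\|=1}|\langle A\xi,\xi\rangle| \leq \|B\|$. This is exactly the claimed inequality.

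The only step that requires care is the monotonicity of the integral, since the definition only gives scalar information against states; however, because both $\int g_1\, d\nu$ and $\int g_2\, d\nu$ are self-adjoint (as $g_1, g_2$ are), checking $\operatorname{tr}(s\,\cdot\,) \leq \operatorname{tr}(s\,\cdot\,)$ on all states is enough to deduce the operator inequality. No further subtlety arises, so this is essentially the whole argument.
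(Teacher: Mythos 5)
Your argument is correct and is essentially the paper's own proof: both establish the pointwise squeeze $-\|f(x)\|I_\mathcal H \leq f(x) \leq \|f(x)\|I_\mathcal H$, pass it through the integral (the paper cites this monotonicity as ``the comparison theorem,'' which you instead verify directly from the definition via states), and conclude from $-B \leq A \leq B$ that $\|A\| \leq \|B\|$. No gaps; you have merely spelled out the two steps the paper leaves implicit.
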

\begin{proof}
For all $x\in X$ we have that
\[
-\|f(x)\|I_\mathcal H \leq f(x) \leq \|f(x)\|I_\mathcal H.
\]
Therefore, by the comparison theorem we have that 
\[
-\int_X \|f(x)\|I_\mathcal H d\nu \leq \int_X f(x)d\nu \leq \int_X\|f(x)\|I_\mathcal H d\nu
\]
and the conclusion follows.
\end{proof}

One may believe that $\left\|\int_X \|f(x)\|I_\mathcal H d\nu(x)\right\|$ would be a good candidate for an L$^1$-norm. Namely, it reminds one of the Lebesgue-Bochner norm 
$\int_X \|f(x)\| d\mu$ on $L^1(X,\mu)\hat\otimes_\pi \mathcal B(\mathcal H)$, where $\hat\otimes_\pi$ is the projective tensor product. Indeed we can say more in finite dimensions.
\begin{lemma}\label{lemma:norm-estimate}
Suppose $\nu \in \povm{{\mathbb C}^{n}}{X}$ and $f: X \rightarrow M_n$ is $\nu$-integrable. Then
\[
\left\|\int_X \|f(x)\|I_n d\nu(x)\right\| \leq \left\|\int_X \sum_{1\leq i,j\leq n} |f_{i,j}(x)|I_n d\nu(x)\right\| \leq n^2\left\|\int_X \|f(x)\|I_n d\nu(x)\right\|.
\]
\end{lemma}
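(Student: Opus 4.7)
The plan is to reduce the problem to a pointwise matrix inequality and then push it through the integral using the monotonicity of $\nu$-integration and of the operator norm on positive operators.

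First, I would establish the underlying scalar-matrix inequality. For any $A = [a_{i,j}] \in M_n$, writing $A = \sum_{i,j} a_{i,j} E_{i,j}$ where $E_{i,j}$ are the standard matrix units of norm $1$, the triangle inequality gives $\|A\| \leq \sum_{i,j} |a_{i,j}|$. Conversely, each entry satisfies $|a_{i,j}| = |\langle A e_j, e_i\rangle| \leq \|A\|$, so $\sum_{i,j} |a_{i,j}| \leq n^2 \|A\|$. Applying this pointwise at every $x \in X$ and multiplying by the positive operator $I_n$ yields
\[
\|f(x)\|\, I_n \;\leq\; \sum_{1\leq i,j\leq n} |f_{i,j}(x)|\, I_n \;\leq\; n^2 \|f(x)\|\, I_n
\]
as an inequality of positive operators at each point.

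Next, I would integrate this sandwich against $\nu$. Each of the three scalar-multiples-of-$I_n$ is positive, and one needs that each is $\nu$-integrable: this follows because $f$ is $\nu$-integrable (so each $f_{i,j}$ is), the functions $|f_{i,j}|$ are integrable by splitting into positive/negative real and imaginary parts, and $\|f(x)\|\, I_n$ is sandwiched below by the middle term (so integrability follows from monotonicity of the integral; measurability of $x \mapsto \|f(x)\|$ comes from the entries being measurable and the norm being a continuous function of the entries). Monotonicity of the integral against a POVM (developed in \cite{MPR}, and invoked earlier in the paper for the comparison theorem) then gives
\[
\int_X \|f(x)\|\, I_n\, d\nu(x) \;\leq\; \int_X \sum_{i,j} |f_{i,j}(x)|\, I_n\, d\nu(x) \;\leq\; n^2 \int_X \|f(x)\|\, I_n\, d\nu(x).
\]

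Finally, since all three integrals are positive operators and for positive operators the implication $0 \leq A \leq B \Rightarrow \|A\| \leq \|B\|$ holds, taking operator norms preserves the chain and yields the claim. The only mild subtlety, which I would address explicitly but briefly, is the integrability and measurability of $x \mapsto \|f(x)\|$; everything else is a direct consequence of the elementary matrix inequality combined with monotonicity.
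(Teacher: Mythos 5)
Your proof is correct and follows essentially the same route as the paper: the pointwise inequality $\|f(x)\| \leq \sum_{i,j}|f_{i,j}(x)| \leq n^2\|f(x)\|$ in $M_n$, pushed through the integral by monotonicity and then through the operator norm on positive operators. The paper simply states this as immediate, whereas you spell out the (correct) justifications for each step.
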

\begin{proof}
It is immediate after recalling that in $M_n$ we have that for all $x\in X$
\[
\|f(x)\| \leq \sum_{1\leq i,j\leq n} |f_{i,j}(x)| \leq n^2\|f(x)\|.
\]
\end{proof}

This shows that in finite dimensions, $\nu$-integrability is equivalent to this proposed norm being finite.
However, this quantity is too much of an overestimate in general and many good functions will not be bounded.

\begin{example}\label{example:Linftynotnormdense}
Let $X = [0,1]$, $\mathcal H$ be countably infinite dimensional, and $\nu = \mu I_\mathcal H$ where $\mu$ is Lebesgue measure. Consider $f(x) = \sum_{n\geq1} 2^{n}\chi_{(\frac{1}{2^{n}}, \frac{1}{2^{n-1}})}(x)e_{n,n}$.
This results in $\int_X f(x) d\nu = I_{\mathcal H}$ but $\left\|\int_X \|f(x)\|I_\mathcal H d\nu\right\| = \infty$.
\end{example}

A second possibility for an L$^1$-norm by analogy seems to be $\left\| \int_X |f(x)| d\nu(x) \right\|$. However, this cannot be a norm as it does not satisfy the triangle inequality, (cf.~\cite{BK} for many more oddities about the operator absolute value):

\begin{example}
Let $A=\left[\begin{matrix}1 & 0 \\ 0 &0\end{matrix}\right]$ and $B=\left[\begin{matrix}0 & 1 \\ 0 &0\end{matrix}\right]$. Then $\|A+B\| = \sqrt 2$ but $|A| + |B| = I$, 
thus 
\[
|A+B| \nleq |A| + |B| \quad  \textrm{and} \quad \|A+B\| \nleq \||A|+|B|\|.
\]
We can turn this into a counterexample to the above proposed norm by letting $X=\{0,1\}$, $\nu(0) = \nu(1) = I_2$, $f(0) = g(1) = A$  and $g(0) = f(1) = B$. Hence,
\begin{align*}
\||f(0) + g(0)| + |f(1) + g(1)|\| & = 2\|A+B\| 
\\ &\nleq  2\||A|+|B|\| 
\\ &= \||f(0)| + |f(1)|\| + \||g(0)| + |g(1)|\|. 
\end{align*}
Therefore, $\left\| \int_X |f(x)| d\nu(x)\right\|$ does not satisfy the triangle inequality.
\end{example}

We now develop an L$^1$-norm that is better adapted to POVM-integrable quantum random variables.

\begin{definition}
Let $\nu \in \povm{\mathcal H}{X}$ and define
$$\mathcal L^1_\mathcal H(X,\nu)= {\rm span}\{f:X\rightarrow \mathcal B(\mathcal H) : \nu\textrm{-integrable, positive quantum random variable}\}.$$
For every $f\in \mathcal L^1_\mathcal H(X,\nu)$ define
\[
\|f\|_1 = \inf\left\{ \left\| \int_X \sum_{k=1}^4 f_k \ d\nu \right\| : f=f_1 - f_2 + i(f_3 - f_4), f_k\in \mathcal L, f_k\geq 0, k=1,\dots, 4 \right\}.
\]
\end{definition}
We may write \ $\|f\|_{1, \nu}$ to emphasize the POVM $\nu$ that $f$ is being integrated against. 
Notice that this is a similar idea to the previous non-norm $\left\| \int_X |f(x)| d\nu(x)\right\|$ but, as we will see, with the added benefit that it actually leads to a norm.

\begin{proposition}
Let $\nu \in \povm{\mathcal H}{X}$ such that $\frac{d\nu}{d\nu_\rho}$ exists. Then $\|\cdot\|_1$ is a semi-norm on $\mathcal L^1_\mathcal H(X,\nu)$ such that $\|f^*\|_1 = \|f\|_1$.
\end{proposition}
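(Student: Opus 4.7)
The plan is to verify each of the seminorm axioms---positivity, scalar homogeneity, and the triangle inequality---directly from the definition, and then handle $\|f^*\|_1 = \|f\|_1$ by exploiting the self-adjointness of positive operators. For finiteness, any $f \in \mathcal L^1_\mathcal H(X,\nu)$ is by construction a finite complex linear combination of positive $\nu$-integrable quantum random variables, so distributing coefficients into real and imaginary and then positive and negative parts produces an admissible quadruple $(f_1,f_2,f_3,f_4)$ with each $f_k \geq 0$ and $f = f_1 - f_2 + i(f_3 - f_4)$, and each $\int_X f_k\,d\nu$ is a bounded positive operator; hence $\|f\|_1$ is a well-defined infimum in $[0, \infty)$.

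For the triangle inequality, given $\varepsilon > 0$ I would pick admissible decompositions $(f_k)$ and $(g_k)$ of $f$ and $g$ realizing the respective infima to within $\varepsilon$, observe that $(f_k + g_k)_{k=1}^4$ is an admissible decomposition of $f+g$, and apply linearity of the POVM integral together with the triangle inequality on $\mathcal B(\mathcal H)$ to the positive operator $\int_X \sum_k(f_k+g_k)\,d\nu$; sending $\varepsilon \to 0$ yields $\|f+g\|_1 \leq \|f\|_1 + \|g\|_1$.

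For absolute scalar homogeneity $\|\alpha f\|_1 = |\alpha|\|f\|_1$ I would proceed by cases. For $\alpha = r \geq 0$ the quadruple $(rf_k)$ is admissible for $rf$ and the summed integral scales by $r$; for $\alpha = -1$ the quadruple $(f_2,f_1,f_4,f_3)$ is admissible for $-f$; for $\alpha = i$ the identity $if = f_4 - f_3 + i(f_1 - f_2)$ shows that $(f_4,f_3,f_1,f_2)$ is admissible for $if$; in each case the summed integral is identical or scales correctly, and the reverse inequality comes from $f = \alpha^{-1}(\alpha f)$. Combining these cases gives $\|\alpha f\|_1 = |\alpha|\|f\|_1$ for every $\alpha$ on the real or imaginary axis. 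The main obstacle is arbitrary unimodular $\alpha = e^{i\theta}$: substituting a decomposition of $f$ into $\alpha f$ only yields $\|\alpha f\|_1 \leq (|\cos\theta|+|\sin\theta|)\|f\|_1$, which equals $|\alpha|=1$ only when $\theta \in \frac{\pi}{2}\mathbb Z$. This is the delicate point, and handling it would seem to require either constructing a finer admissible decomposition of $\alpha f$ not simply inherited from that of $f$ (using the freedom one has in choosing non-canonical positive parts), or reading the statement as a real-scalar seminorm with the four invariances under $\{\pm 1, \pm i\}$ covering the rotations.

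Finally, for the adjoint identity: since each $f_k$ is positive and hence self-adjoint, $f^* = f_1 - f_2 - i(f_3 - f_4) = f_1 - f_2 + i(f_4 - f_3)$, so the quadruple $(f_1, f_2, f_4, f_3)$ is admissible for $f^*$ with the same summed integral. Thus $\|f^*\|_1 \leq \|f\|_1$, and applying the same observation to $f^*$ gives the reverse inequality.
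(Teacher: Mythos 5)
Your verification of the triangle inequality and of $\|f^*\|_1=\|f\|_1$ is exactly the paper's argument: combine admissible quadruples of $f$ and $g$ into one for $f+g$ and take infima, and swap the third and fourth components to pass between $f$ and $f^*$ with the same summed integral. Where you go beyond the paper is scalar homogeneity, which the paper's proof does not address at all, and your hesitation there is justified rather than a defect of your write-up: in the scalar model $\mathcal H=\mathbb C$, $X$ a point, $\nu=1$, the infimum evaluates to $\|z\|_1=|\mathrm{Re}\,z|+|\mathrm{Im}\,z|$, so $\|e^{i\pi/4}\|_1=\sqrt2\,\|1\|_1$ and full complex homogeneity genuinely fails; no cleverer choice of decomposition can rescue it. The correct reading is the one you propose in your last alternative, namely a real-linear seminorm that is in addition invariant under multiplication by $\pm 1,\pm i$ (hence equivalent, within a factor of $\sqrt2$, to a complex seminorm), and this is consistent with the paper's own later convention in Section 5, where the compatible norm on $L^1(X,\mu)$ is taken to be $\int_X\bigl(|\mathrm{Re}\,f|+|\mathrm{Im}\,f|\bigr)\,d\mu$. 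So your proof is correct, follows the paper's route on the parts the paper proves, and correctly diagnoses the one point the paper leaves implicit.
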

\begin{proof}
Suppose $f,g\in \mathcal L^1_\mathcal H(X,\nu)$. For every $f_k, g_k\in \mathcal L_\nu, 1\leq k\leq 4$ such that $f_k,g_k\geq 0$, $f = f_1 - f_2 + i(f_3 - f_4)$ and $g = g_1 - g_2 + i(g_3 - g_4)$ we have that 
\begin{align*}
\|f+g\|_1 & \leq \left\| \int_X \sum_{k=1}^4 f_k+g_k \ d\nu\right\|
\\ & \leq \left\| \int_X \sum_{k=1}^4 f_k \ d\nu\right\| + \left\| \int_X \sum_{k=1}^4 g_k \ d\nu\right\|.
\end{align*}
Therefore, by taking infimums on the right, we obtain  $\|f+g\|_1 \leq \|f\|_1 + \|g\|_1$.
Lastly, we have
\begin{align*}
\|f\|_1 & =  \inf\left\{ \left\| \int_X \sum_{k=1}^4 f_k \ d\nu \right\| : f=f_1 - f_2 + i(f_3 - f_4), f_k\in \mathcal L, f_k\geq 0, 1\leq k \leq 4 \right\}
\\ & = \inf\left\{ \left\| \int_X \sum_{k=1}^4 f_k \ d\nu \right\| : f^*=f_1 - f_2 + i(f_4 - f_3), f_k\in \mathcal L, f_k\geq 0, 1\leq k\leq 4 \right\}
\\ & = \|f^*\|_1.
\end{align*}
\end{proof}

\begin{lemma}\label{lemma:equivalencetoclassical}
Let $\nu \in \povm{\mathcal H}{X}$ such that $\frac{d\nu}{d\nu_\rho}$ exists. For all $\nu$-integrable quantum random variables $f:X\rightarrow \mathcal B(\mathcal H)$
\[
\int_X f d\nu = \int_X\left( \frac{d\nu}{d\nu_\rho}^{1/2}f\frac{d\nu}{d\nu_\rho}^{1/2}\right) d\nu_\rho I_\mathcal H
\]
and so
\[
\|f\|_{1,\nu} \geq \left\| \frac{d\nu}{d\nu_\rho}^{1/2}f\frac{d\nu}{d\nu_\rho}^{1/2}\right\|_{1,\nu_\rho I_\mathcal H}.
\]
Furthermore, this is an equality if $\frac{d\nu}{d\nu_\rho}(x) \in \mathcal B(\mathcal H)^{-1}$.
\end{lemma}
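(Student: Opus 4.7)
The plan has three stages: prove the integral identity by testing against states; derive the seminorm inequality by transporting positive decompositions through conjugation by $D^{1/2}$, where $D := \frac{d\nu}{d\nu_\rho}$; and handle the equality claim by pulling such decompositions back when $D$ is invertible.

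For the identity I would fix an arbitrary state $s \in \mathcal S(\mathcal H)$ and apply the trace pairing $T \mapsto \tr(sT)$ to both sides. The defining formula of the $\nu$-integral yields
\[
\tr\!\left(s \int_X f\,d\nu\right) = \int_X \tr\!\left(s\, D(x)^{1/2} f(x)\, D(x)^{1/2}\right) d\nu_\rho(x).
\]
On the other hand, $\nu_\rho I_\mathcal H$ is a scalar POVM whose Radon--Nikod\'ym derivative with respect to $\nu_\rho$ is $I_\mathcal H$, so integration against it is entrywise (as recalled just before the definition of $\|\cdot\|_1$) and therefore commutes with $\tr(s\,\cdot)$, producing exactly the same expression for the right-hand side. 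Since $\mathcal S(\mathcal H)$ separates $\mathcal B(\mathcal H)$, the two integrals agree as operators.

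For the seminorm inequality, I would take any admissible decomposition $f = f_1 - f_2 + i(f_3 - f_4)$ with positive $f_k \in \mathcal L^1_\mathcal H(X,\nu)$ and form $h_k := D^{1/2} f_k D^{1/2}$. Each $h_k$ is a positive quantum random variable (conjugation by $D^{1/2} \geq 0$ preserves positivity), and the elementary computation $(h_k)_s(x) = \tr(s D^{1/2} f_k D^{1/2}) = (f_k)_s(x)$ shows that $h_k$ lies in $\mathcal L^1_\mathcal H(X, \nu_\rho I_\mathcal H)$. Applying the first-stage identity to $\sum_k f_k$ gives $\int_X \sum_k f_k\,d\nu = \int_X \sum_k h_k\,d(\nu_\rho I_\mathcal H)$, so the operator norms coincide; taking the infimum over admissible decompositions of $f$ yields the desired inequality.

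For the equality statement, assume $D(x)$ is invertible in $\mathcal B(\mathcal H)$ for every $x$, so that $D(x)^{-1/2}$ is pointwise defined via the Borel functional calculus. Given any admissible positive decomposition $D^{1/2} f D^{1/2} = g_1 - g_2 + i(g_3 - g_4)$ in $\mathcal L^1_\mathcal H(X, \nu_\rho I_\mathcal H)$, set $f_k := D^{-1/2} g_k D^{-1/2}$. Each $f_k$ is positive, measurable (since the functional calculus on the measurable QRV $D$ preserves measurability), decomposes $f$, and satisfies $(f_k)_s = \tr(s D^{1/2} f_k D^{1/2}) = \tr(s g_k) = (g_k)_s$, so it is $\nu$-integrable. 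The integral identity then produces $\int_X \sum_k f_k\,d\nu = \int_X \sum_k g_k\,d(\nu_\rho I_\mathcal H)$, and taking infima on both sides gives $\|f\|_{1,\nu} \le \|D^{1/2} f D^{1/2}\|_{1,\nu_\rho I_\mathcal H}$, hence equality. I expect the main obstacle to be the pullback step: because $D^{-1/2}(x)$ need not be uniformly bounded in $x$, the $f_k$ may fail to be essentially bounded, so one must rely solely on pointwise invertibility and on the fact that only $\nu$-integrability (not norm-boundedness) is required for $f_k$ to enter the infimum defining $\|f\|_{1,\nu}$.
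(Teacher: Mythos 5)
Your proposal is correct and follows essentially the same route as the paper: the integral identity is verified by testing against the separating family of states, the inequality is obtained by pushing each positive decomposition of $f$ forward under conjugation by $\left(\frac{d\nu}{d\nu_\rho}\right)^{1/2}$ to a decomposition of the conjugated function, and equality under pointwise invertibility is obtained by pulling decompositions back via $\left(\frac{d\nu}{d\nu_\rho}\right)^{-1/2}$ and checking $\nu$-integrability of the resulting pieces. Your closing remark that only $\nu$-integrability, not essential boundedness, is needed for the pulled-back pieces matches exactly the check the paper performs.
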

\begin{proof}
For all $s\in \mathcal S(\mathcal H)$ one has that
\begin{align*}
\tr\left( s\int_X f d\nu \right) & = \int_X \tr\left(s\frac{d\nu}{d\nu_\rho}^{1/2}f\frac{d\nu}{d\nu_\rho}^{1/2}  \right)d\nu_\rho
\\ & \tr\left(s\int_X \frac{d\nu}{d\nu_\rho}^{1/2}f\frac{d\nu}{d\nu_\rho}^{1/2} d\nu_\rho I_\mathcal H\right).
\end{align*}
Thus because these states are separating, i.e.\ when  $f\in \mathcal L^1_\mathcal H(X,\nu)$ is such that $\|f\|_1 \neq 0$, then there exists a state $s\in \mathcal S(\mathcal H)$ such that $f_s \neq 0 \in L^1(X, \nu_\rho)$, the two integrals are equal.

As for the norm inequality, if $f_k\geq 0 \in \mathcal L^1_\mathcal H(X,\nu), 1\leq k\leq 4$, such that $f = f_1 - f_2 + i(f_3 - f_4)$ then by the equality above
\begin{align*}
\left\|\int_X \sum_{k=1}^4 f_k d\nu \right\|
& = \left\| \int_X \frac{d\nu}{d\nu_\rho}^{1/2}\left(\sum_{k=1}^4  f_k\right)\frac{d\nu}{d\nu_\rho}^{1/2} d\nu_\rho I_\mathcal H \right\|
\\ & = \left\| \int_X \sum_{k=1}^4 \frac{d\nu}{d\nu_\rho}^{1/2} f_k\frac{d\nu}{d\nu_\rho}^{1/2} d\nu_\rho I_\mathcal H \right\|
\\ & \geq \left\| \frac{d\nu}{d\nu_\rho}^{1/2}f_1\frac{d\nu}{d\nu_\rho}^{1/2} - \frac{d\nu}{d\nu_\rho}^{1/2}f_2\frac{d\nu}{d\nu_\rho}^{1/2} \right. \\ & \left.\quad \quad \quad \quad
+\ i\left(\frac{d\nu}{d\nu_\rho}^{1/2}f_3\frac{d\nu}{d\nu_\rho}^{1/2} - \frac{d\nu}{d\nu_\rho}^{1/2}f_4\frac{d\nu}{d\nu_\rho}^{1/2}\right) \right\|_{1,\nu_\rho I_\mathcal H}
\\ & = \left\| \frac{d\nu}{d\nu_\rho}^{1/2}f\frac{d\nu}{d\nu_\rho}^{1/2}\right\|_{1,\nu_\rho I_\mathcal H}.
\end{align*}
Taking the infimum over all possible $f_k$ we get that 
\[
\|f\|_{1,\nu} \geq  \left\| \frac{d\nu}{d\nu_\rho}^{1/2}f\frac{d\nu}{d\nu_\rho}^{1/2}\right\|_{1,\nu_\rho I_\mathcal H}.
\]
Now suppose that $g_k\in \mathcal L^1_\mathcal H(X,\nu_\rho I_\mathcal H), 1\leq k\leq 4$, such that $\frac{d\nu}{d\nu_\rho}^{1/2}f\frac{d\nu}{d\nu_\rho}^{1/2} = g_1 - g_2 + i(g_3 - g_4)$. If $\frac{d\nu}{d\nu_\rho}(x) \in \mathcal B(\mathcal H)^{-1}$ for all $x\in X$ then define $f_k = \frac{d\nu}{d\nu_\rho}^{-1/2}g_k\frac{d\nu}{d\nu_\rho}^{-1/2} \geq 0$ for $1\leq k\leq 4$. These are in $\mathcal L^1_\mathcal H(X,\nu)$ since 
\begin{align*}
\|f_k\|_{1,\nu} & = \left\| \int_X \frac{d\nu}{d\nu_\rho}^{-1/2}g_k\frac{d\nu}{d\nu_\rho}^{-1/2} d\nu \right\|
\\ & = \left\| \int_X \frac{d\nu}{d\nu_\rho}^{1/2}\left(\frac{d\nu}{d\nu_\rho}^{-1/2}g_k\frac{d\nu}{d\nu_\rho}^{-1/2}\right) \frac{d\nu}{d\nu_\rho}^{1/2} d\nu_\rho I_\mathcal H\right\|
\\ & = \left\| \int_X g_k d\nu_\rho I_\mathcal H \right\|
\\ & = \|g_k\|_{1,\nu_\rho I_\mathcal H} < \infty.
\end{align*}
Moreover,
\begin{align*}
f_1 - f_2 + i(f_3 - f_4) & = \frac{d\nu}{d\nu_\rho}^{-1/2}(g_1 - g_2 + i(g_3 - g_4))\frac{d\nu}{d\nu_\rho}^{-1/2}
\\ & = \frac{d\nu}{d\nu_\rho}^{-1/2}\left(\frac{d\nu}{d\nu_\rho}^{1/2}f\frac{d\nu}{d\nu_\rho}^{1/2}\right)\frac{d\nu}{d\nu_\rho}^{-1/2}
\\ & = f.
\end{align*}
Using the same calculations as earlier in this proof, we have 
\[
\|f\|_{1,\nu} \leq \left\| \sum_{k=1}^4 f_k \right\|_{1,\nu} = \left\|\sum_{k=1}^4 g_k\right\|_{1,\nu_\rho I_\mathcal H}
\]
and taking the infimum over all $g_k$, we obtain 
\[
\|f\|_{1,\nu} \leq \left\| \frac{d\nu}{d\nu_\rho}^{1/2}f\frac{d\nu}{d\nu_\rho}^{1/2}\right\|_{1,\nu_\rho I_\mathcal H}.
\]
\end{proof}



To further illustrate how this semi-norm behaves consider the following example which arises in \cite{BK}.
\begin{example}
Let $X=\{0,1\}$ and $\nu(i) = I_2, i=0,1$. Consider the function $f: \{0,1\} \rightarrow M_2$ given by 
\[
f(0) = \left[\begin{matrix} 4 & 4\\ 4& 4\end{matrix}\right]\quad \textrm{and} \quad f(1) = \left[\begin{matrix}3 & 0 \\ 0&-3\end{matrix}\right].
\]
Then
\[
\left\| \int_X f(x) d\nu\right\| = \|f(0) + f(1)\| =  \left\| \left[\begin{matrix} 7&4\\ 4&1\end{matrix}\right] \right\| = 9, \quad \textrm{and}
\]
\[
\left\| \int_X |f(x)| d\nu\right\| = \|f(0) + |f(1)|\| =  \left\| \left[\begin{matrix} 7&4\\ 4&7\end{matrix}\right] \right\| = 11.
\]
However, consider $f_1,f_2:\{0,1\} \rightarrow M_2$ given by
\[
f_1(0) = f(0), \ f_1(1) = \left[\begin{matrix} 4& -2\\-2& 1\end{matrix}\right], \ f_2(0) = 0_2, \ \textrm{and} \ f_2(1) = \left[\begin{matrix} 1&-2 \\ -2&4\end{matrix}\right].
\]
This gives that $f_1,f_2 \geq 0$ and $f = f_1 - f_2$ and so
\[
\|f\|_1 \leq \left\|\int_X f_1(x) + f_2(x) d\nu\right\| = \left\| f(0) + f_1(1) + f_2(1) \right\| = \left\| \left[\begin{matrix} 9&0\\ 0&9\end{matrix}\right] \right\| = 9.
\]
\end{example}

Throughout this paper an important dynamic is how this proposed 1-topology compares against various forms of the ultraweak topology. 

\begin{lemma}\label{lemma:statetopologyweaker}
Let $\nu \in \povm{\mathcal H}{X}$ such that $\frac{d\nu}{d\nu_\rho}$ exists.
If $f\in \mathcal L^1_\mathcal H(X,\nu)$ and $s\in \mathcal S(\mathcal H)$ then 
\[
\int_X |f_s|d\nu_\rho \ \leq \ \|f\|_1.
\]
\end{lemma}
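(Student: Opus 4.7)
The plan is to bound $\int_X |f_s|\,d\nu_\rho$ from above by $\|\int_X \sum_{k=1}^4 f_k\,d\nu\|$ for every admissible decomposition $f = f_1 - f_2 + i(f_3 - f_4)$, and then take the infimum.

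First I would fix any decomposition $f = f_1 - f_2 + i(f_3 - f_4)$ with $f_k \in \mathcal{L}^1_\mathcal{H}(X,\nu)$ and $f_k \geq 0$. Since the map $A \mapsto \tr(s (\tfrac{d\nu}{d\nu_\rho})^{1/2} A (\tfrac{d\nu}{d\nu_\rho})^{1/2})$ is positive and linear when $s \in \mathcal{S}(\mathcal{H})$, each $(f_k)_s$ is a nonnegative real-valued function and $f_s = (f_1)_s - (f_2)_s + i\bigl((f_3)_s - (f_4)_s\bigr)$ pointwise $\nu_\rho$-a.e. The key pointwise inequality is the elementary estimate
\[
|a - b + i(c - d)| = \sqrt{(a-b)^2 + (c-d)^2} \ \leq \ a + b + c + d
\]
for $a,b,c,d \geq 0$. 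Applied to $f_s(x)$, this yields $|f_s(x)| \leq \sum_{k=1}^4 (f_k)_s(x)$ for $\nu_\rho$-a.e.\ $x$.

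Next I would integrate and push $s$ through. By the definition of $\int_X f_k\,d\nu$, we have $\int_X (f_k)_s\,d\nu_\rho = \tr\bigl(s \int_X f_k\,d\nu\bigr)$. Summing over $k$ and combining with the pointwise bound,
\[
\int_X |f_s|\,d\nu_\rho \ \leq \ \sum_{k=1}^4 \tr\!\left(s \int_X f_k\,d\nu\right) = \tr\!\left(s \int_X \sum_{k=1}^4 f_k\,d\nu\right).
\]
Because $\sum_k f_k \geq 0$, the operator $A := \int_X \sum_{k=1}^4 f_k\,d\nu$ is positive, so $\tr(sA) \leq \|s\|_1\|A\| = \|A\|$ (using $\|s\|_1 = 1$). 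This gives
\[
\int_X |f_s|\,d\nu_\rho \ \leq \ \left\| \int_X \sum_{k=1}^4 f_k\,d\nu \right\|.
\]

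Finally, taking the infimum of the right-hand side over all admissible decompositions yields $\int_X |f_s|\,d\nu_\rho \leq \|f\|_1$, as desired. No real obstacle arises here; the only subtlety is ensuring that every piece of the decomposition is genuinely $\nu$-integrable so that the identity $\int (f_k)_s\,d\nu_\rho = \tr(s\int f_k\,d\nu)$ is legitimate, but this is built into the requirement $f_k \in \mathcal{L}^1_\mathcal{H}(X,\nu)$ appearing in the infimum defining $\|\cdot\|_1$.
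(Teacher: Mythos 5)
Your argument is correct and is essentially the paper's own proof: the pointwise bound $|f_s|\le\sum_{k=1}^4 (f_k)_s$ (which the paper obtains via $|(f_1-f_2)_s|\le (f_1+f_2)_s$ and $|(f_3-f_4)_s|\le (f_3+f_4)_s$), followed by $\int_X (f_k)_s\,d\nu_\rho=\tr\bigl(s\int_X f_k\,d\nu\bigr)$, the estimate $\tr(sA)\le\|A\|$ for positive $A$, and the infimum over decompositions. No changes needed.
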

\begin{proof}
Let $f_i\in \mathcal L^1_\mathcal H(X,\nu), f_i\geq 0, 1\leq i\leq 4$ such that $f= f_1-f_2 + i(f_3 - f_4)$. We have that 
\[
-f_1 - f_2 \leq f_1 - f_2 \leq f_1 + f_2 \quad \textrm{and} \quad -f_3 -f_4 \leq f_3 - f_4 \leq f_3+f_4.
\]
Recalling that $g_s = \tr\left(s\left(\frac{d\nu}{d\nu_\rho}\right)^{1/2}g\left(\frac{d\nu}{d\nu_\rho}\right)^{1/2}\right)$,
this implies that 
\[
-(f_1 + f_2)_s \leq (f_1 - f_2)_s \leq (f_1+f_2)_s \quad \textrm{and} \quad -(f_3 + f_4)_s \leq (f_3 - f_4)_s \leq (f_3 + f_4)_s
\]
and so $|(f_1-f_2)_s| \leq (f_1+f_2)_s$ and $|(f_3 -f_4)_s| \leq (f_3+f_4)_s$.
Therefore,
\begin{align*}
\int_X |f_s| d\nu_\rho & \leq \int_X |(f_1-f_2)_s| + |(f_3 - f_4)_s|\ d\nu_\rho
\\ & \leq \int_X (f_1 + f_2)_s + (f_3+f_4)_s\ d\nu_\rho
\\ & = \int_X (f_1+f_2+f_3+f_4)_s\ d\nu_\rho
\\ & = \tr\left( s\int_X f_1 + f_2 + f_3 + f_4 \ d\nu \right)
\\ & \leq \left\|\int_X f_1 + f_2 + f_3 + f_4 \ d\nu \right\|
\end{align*}
and the conclusion follows by taking the infimum over all such decompositions.
\end{proof}

In finite dimensions, with some conditions on the Radon-Nikod\'ym derivative, we get that the two semi-norms developed in this section are equivalent. In \cite{PRLyapunov} the authors introduced the von Neumann algebra of essentially bounded quantum random variables 
\begin{align*}
L^\infty_\mathcal H(X,\nu) & = \{ h:X\rightarrow \mathcal B(\mathcal H) \ \textrm{qrv} \ : \exists M\geq 0, \|h(x)\| \leq M \ \textrm{a.e wrt} \ \nu \}
\\ & = L^\infty(X,\nu_\rho) \ \bar\otimes\  \mathcal B(\mathcal H)
\end{align*}
which is needed in the following proposition and throughout the rest of the paper. Note that the norm this comes with is defined as
\[
\|f(x)\|_\infty := \Big\| \|f(x)\| \Big\|_{L^\infty(X,\nu_\rho)}
\]
since $\|f(x)\| \in L^\infty(X,\nu_\rho)$.

\begin{proposition}\label{prop:finitecomparable}
Suppose $\mathcal H= \mathbb C^n$, $\nu\in\povm{\mathcal H}{X}$ such that $\frac{d\nu}{d\nu_\rho} \in M_n$ is invertible almost everywhere ($\frac{d\nu}{d\nu_\rho} \in M_n^{-1}$ a.e.), and $\frac{d\nu}{d\nu_\rho}, \frac{d\nu}{d\nu_\rho}^{-1} \in L^\infty_\mathcal H(X,\nu)$. For $f\in \mathcal L^1_\mathcal H(X,\nu)$ self-adjoint we have
\[
\|f\|_1 \leq \left\| \int_X |f(x)| d\nu\right\| \leq \left\| \int_X \|f(x)\| I_n d\nu\right\| \leq n \left\|\frac{d\nu}{d\nu_\rho}\right\|_\infty \left\|\frac{d\nu}{d\nu_\rho}^{-1}\right\|_\infty \|f\|_1.
\]
\end{proposition}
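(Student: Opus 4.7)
The plan is to verify the three inequalities in order; the first two are essentially routine, while the third carries the main difficulty and is where the factor of $n$ appears.

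For $\|f\|_1 \leq \|\int_X |f(x)| d\nu\|$, use the Jordan decomposition $f = f_+ - f_-$ available because $f$ is self-adjoint. Any representation $f = g_1 - g_2 + i(g_3 - g_4)$ with positive $\nu$-integrable $g_k$ forces $g_3 = g_4$ almost everywhere by self-adjointness, so $f = g_1 - g_2$; since $0 \leq f_\pm \leq g_1 + g_2$, comparison places $f_\pm$ in $\mathcal L^1_\mathcal H(X,\nu)$. Plugging $f_1 = f_+$, $f_2 = f_-$, $f_3 = f_4 = 0$ into the definition of $\|\cdot\|_1$ yields the inequality, since $|f| = f_+ + f_-$. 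For $\|\int_X |f| d\nu\| \leq \|\int_X \|f\| I_n d\nu\|$, use the pointwise operator inequality $|f(x)| \leq \|f(x)\| I_n$ (valid for self-adjoint $f(x)$ by the spectral theorem), combine with the comparison theorem for POVM integrals, and apply monotonicity of the operator norm on positive operators.

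For the third inequality, the strategy is to transfer everything to the classical measure $\nu_\rho I_\mathcal H$ via Lemma~\ref{lemma:equivalencetoclassical}. Setting $g = (d\nu/d\nu_\rho)^{1/2} f (d\nu/d\nu_\rho)^{1/2}$, the invertibility of the Radon--Nikod\'ym derivative yields $\|g\|_{1,\nu_\rho I_\mathcal H} = \|f\|_{1,\nu}$. The lemma rewrites $\int_X \|f(x)\| I_n d\nu$ as the entrywise classical integral $\int_X \|f(x)\| (d\nu/d\nu_\rho)(x) d\nu_\rho$, and the pointwise operator estimate $\|f(x)\| (d\nu/d\nu_\rho)(x) \leq \|d\nu/d\nu_\rho\|_\infty \|f(x)\| I_n$ together with comparison extracts the first factor $\|d\nu/d\nu_\rho\|_\infty$ out front and leaves a scalar integral $\int_X \|f\| d\nu_\rho$. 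Inverting the similarity gives $\|f(x)\| \leq \|(d\nu/d\nu_\rho)^{-1}\|_\infty \|g(x)\|$ pointwise, which absorbs the second factor $\|(d\nu/d\nu_\rho)^{-1}\|_\infty$ and reduces the problem to showing $\int_X \|g\| d\nu_\rho \leq n \|g\|_{1,\nu_\rho I_\mathcal H}$.

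This last estimate is the pivotal step and the source of the factor $n$. Using $\|A\| = \||A|\| \leq \tr|A|$ for self-adjoint $A \in M_n$, and noting that any decomposition $g = h_1 - h_2$ with $h_k \geq 0$ satisfies $|g(x)| \leq h_1(x) + h_2(x)$, one obtains $\tr|g(x)| \leq \tr(h_1(x) + h_2(x))$. Integrating and commuting the trace with the entrywise classical integral gives $\int_X \|g\| d\nu_\rho \leq \tr\int_X (h_1+h_2) d\nu_\rho I_\mathcal H \leq n \|\int_X (h_1+h_2) d\nu_\rho I_\mathcal H\|$, where the final step uses the standard bound $\tr A \leq n\|A\|$ for positive $A \in M_n$. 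Taking the infimum over all such $h_1, h_2$ finishes the bound, and chaining the three estimates produces the third inequality. The main obstacle is the careful bookkeeping of Radon--Nikod\'ym transitions between $\nu$ and $\nu_\rho I_\mathcal H$ and the recognition that the linear factor $n$, rather than the quadratic $n^2$ that Lemma~\ref{lemma:norm-estimate} would give, emerges naturally from the trace-versus-operator-norm comparison on $M_n$ applied after replacing $|g|$ by $h_1 + h_2$.
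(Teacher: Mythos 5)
Your strategy coincides with the paper's: the first two inequalities come from the Jordan decomposition and $|f(x)|\le\|f(x)\|I_n$, and the third from transferring to $\nu_\rho$ via Lemma \ref{lemma:equivalencetoclassical}, peeling off $\left\|\frac{d\nu}{d\nu_\rho}\right\|_\infty$ and $\left\|\frac{d\nu}{d\nu_\rho}^{-1}\right\|_\infty$ by conjugation, and producing the factor $n$ from the comparison of trace and operator norm on $M_n$ (the paper realizes the same trace as $\sum_{i=1}^n(f_1+f_2)_{e_{i,i}}$ and applies Lemma \ref{lemma:statetopologyweaker} to each of the $n$ diagonal states). So the architecture and the source of the constant are exactly the paper's.

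There is, however, a concretely false step, invoked twice: the claim that $g=h_1-h_2$ with $h_1,h_2\ge 0$ forces $|g|\le h_1+h_2$ (equivalently, that $-D\le C\le D$ with $D\ge0$ implies $|C|\le D$). This fails already in $M_2$: for
\[
C=\begin{pmatrix}1&1\\1&-1\end{pmatrix},\qquad D=\begin{pmatrix}3/2&0\\0&7/5\end{pmatrix},
\]
one checks $D\pm C\ge0$, yet $|C|=\sqrt2\,I_2\not\le D$ since $7/5<\sqrt2$. This is precisely the pathology of the operator absolute value that the paper itself flags (the example showing $\left\|\int_X|f(x)|\,d\nu\right\|$ fails the triangle inequality, citing \cite{BK}). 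The two consequences you draw from it are both true but require different justifications. First, the inequality you actually use, $\tr|g(x)|\le\tr\bigl(h_1(x)+h_2(x)\bigr)$, follows from the triangle inequality for the trace norm, $\tr|h_1-h_2|=\|h_1-h_2\|_{\mathrm{tr}}\le\|h_1\|_{\mathrm{tr}}+\|h_2\|_{\mathrm{tr}}=\tr h_1+\tr h_2$; alternatively the paper bypasses $|g|$ altogether using the elementary fact that $A,B\ge0$ gives $\|A-B\|\le\max\{\|A\|,\|B\|\}\le\|A+B\|\le\tr(A+B)$, which is all the third inequality needs. Second, the integrability of $f_\pm$ cannot be deduced from the false domination $f_\pm\le g_1+g_2$; in the finite-dimensional setting of this proposition it follows instead from Lemma \ref{lemma:norm-estimate} (there $\nu$-integrability is equivalent to $\left\|\int_X\|h(x)\|I_n\,d\nu\right\|<\infty$, and $\|f_\pm(x)\|\le\|f(x)\|$). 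With these two repairs your chain of estimates closes and is essentially the paper's proof.
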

\begin{proof}
The first two inequalities are true in general without the finite-dimensional or boundedness conditions. This is because $f$ can be written as the sum of its positive and negative parts, $f = f_+ - f_-$, and so 
\[
\|f\|_1 \leq \|f_+ + f_-\|_1 = \||f|\|_1 = \left\| \int_X |f(x)| d\nu\right\|.
\]
The second inequality follows easily since $|f(x)| \leq \|f(x)\|I_\mathcal H$.

Towards the last inequality, first consider
\begin{align*}
\left\| \int_X \|f(x)\| I_n d\nu\right\| & = \sup_{s\in \mathcal S(\mathcal H)} \tr\left( s \int_X \|f(x)\| I_n d\nu\right)
\\ & = \sup_{s\in \mathcal S(\mathcal H)} \int_X \tr\left(s \left(\frac{d\nu}{d\nu_\rho}(x)\right)^{1/2}\|f(x)\|I_n \left(\frac{d\nu}{d\nu_\rho}(x)\right)^{1/2} \right) d\nu_\rho
\\ & = \sup_{s\in \mathcal S(\mathcal H)} \int_X \|f(x)\| \tr\left(s \frac{d\nu}{d\nu_\rho}(x)\right)d\nu_\rho
\\ & \leq \left\| \frac{d\nu}{d\nu_\rho}\right\|_\infty \int_X \|f(x)\| d\nu_\rho.
\end{align*}
Here we are using the fact that $\nu$ and $\nu_\rho$ are mutually absolutely continuous, meaning $L^\infty_\mathcal H(X,\nu) = L^\infty_\mathcal H(X,\nu_\rho)$.

We will need the following nice fact about positive operators: if $A,B \geq 0$ then $-B \leq A-B \leq A$ and so $\|A-B\| \leq \max\{\|A\|,\|B\|\} \leq \|A+B\|$.
Now for each $\epsilon > 0$ there exists $f_1, f_2 \geq 0 \in L^1_\mathcal H(X,\nu)$ such that $f = f_1 - f_2$ and $\|f_1 + f_2\|_1 < \|f\|_1 + \epsilon$. For all $x\in X$ we have that 
\begin{align*}
& \|f(x)\| 
\\ & = \|f_1(x) - f_2(x)\| 
\\ &  \leq \|f_1(x) + f_2(x)\|
\\ & = \left\| \left(\frac{d\nu}{d\nu_\rho}(x)\right)^{-1/2}\left(\frac{d\nu}{d\nu_\rho}(x)\right)^{1/2}(f_1(x) + f_2(x))\left(\frac{d\nu}{d\nu_\rho}(x)\right)^{1/2}\left(\frac{d\nu}{d\nu_\rho}(x)\right)^{-1/2}  \right\|
\\ & \leq \left\|\frac{d\nu}{d\nu_\rho}^{-1} \right\|_\infty\left\|\left(\frac{d\nu}{d\nu_\rho}(x)\right)^{1/2}(f_1(x) + f_2(x))\left(\frac{d\nu}{d\nu_\rho}(x)\right)^{1/2}\right\|
\\ & \leq \left\|\frac{d\nu}{d\nu_\rho}^{-1} \right\|_\infty \tr\left(\left(\frac{d\nu}{d\nu_\rho}(x)\right)^{1/2}(f_1(x) + f_2(x))\left(\frac{d\nu}{d\nu_\rho}(x)\right)^{1/2} \right)
\\ & = \left\|\frac{d\nu}{d\nu_\rho}^{-1} \right\|_\infty\sum_{i=1}^n\tr\left(e_{i,i}\left(\frac{d\nu}{d\nu_\rho}(x)\right)^{1/2}(f_1(x) + f_2(x))\left(\frac{d\nu}{d\nu_\rho}(x)\right)^{1/2}\right)
\\ & = \left\|\frac{d\nu}{d\nu_\rho}^{-1} \right\|_\infty\sum_{i=1}^n (f_1 + f_2)_{e_{i,i}}(x).
\end{align*}
Therefore, by the comparison theorem, an earlier calculation and Lemma \ref{lemma:statetopologyweaker} we get that
\begin{align*}
\left\|\int_X \|f(x)\|I_\mathcal H d\nu\right\| & \leq \left\| \frac{d\nu}{d\nu_\rho}\right\|_\infty \int_X \|f(x)\| d\nu_\rho
\\ & \leq \left\| \frac{d\nu}{d\nu_\rho}\right\|_\infty\left\|\frac{d\nu}{d\nu_\rho}^{-1} \right\|_\infty \sum_{i=1}^n \int_X (f_1 + f_2)_{e_{i,i}} d\nu_\rho
\\ & \leq n \left\| \frac{d\nu}{d\nu_\rho}\right\|_\infty\left\|\frac{d\nu}{d\nu_\rho}^{-1} \right\|_\infty \|f_1 + f_2\|_1
\\ & < n \left\| \frac{d\nu}{d\nu_\rho}\right\|_\infty\left\|\frac{d\nu}{d\nu_\rho}^{-1} \right\|_\infty (\|f\|_1 + \epsilon).
\end{align*}
\end{proof}

Define $\mathcal I = \{f\in \mathcal L^1_\mathcal H(X,\nu) : \|f\|_1 = 0\}$ and let $L^1_\mathcal H(X,\nu) = \mathcal L^1_\mathcal H(X,\nu)/\mathcal I$. The previous lemma implies that the 1-topology on $L^1_\mathcal H(X,\nu)$ is stronger than the topology $(f_n)_s \rightarrow f_s$ for all $s\in \mathcal S(\mathcal H)$.

\begin{theorem}
$L^1_\mathcal H(X,\nu)$ is a Banach space, that is, it is complete in the 1-norm for $\nu\in \povm{\mathcal H}{X}$ where $\frac{d\nu}{d\nu_\rho}$ exists. 
\end{theorem}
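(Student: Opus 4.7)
The plan is to invoke the standard characterization of Banach spaces: a normed space is complete if and only if every absolutely summable series converges. So I take a sequence $(f_n) \subset \mathcal L^1_\mathcal H(X,\nu)$ with $C := \sum_n \|f_n\|_1 < \infty$ and aim to produce $f \in \mathcal L^1_\mathcal H(X,\nu)$ with $\bigl\|f - \sum_{n \leq N} f_n\bigr\|_1 \to 0$ as $N \to \infty$.

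By the definition of $\|\cdot\|_1$ as an infimum, for each $n$ I pick a decomposition $f_n = f_n^{(1)} - f_n^{(2)} + i(f_n^{(3)} - f_n^{(4)})$ with positive $f_n^{(k)} \in \mathcal L^1_\mathcal H(X,\nu)$ satisfying $\|A_n\| \leq \|f_n\|_1 + 2^{-n}$, where $A_n := \int_X \sum_k f_n^{(k)} \, d\nu \in \mathcal B(\mathcal H)_+$; then $\sum_n \|A_n\| \leq C+1$, so $\sum_n A_n$ converges absolutely in operator norm. For each $k$, define $F^{(k)}(x)$ pointwise as the monotone (a priori possibly unbounded) limit of the increasing partial sums $S_N^{(k)}(x) := \sum_{n \leq N} f_n^{(k)}(x)$ in the positive self-adjoint cone.

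Granting that $F^{(k)}(x) \in \mathcal B(\mathcal H)_+$ for $\nu_\rho$-almost every $x$ (and modifying $F^{(k)}$ on the exceptional null set so it becomes a bona fide positive quantum random variable), $\nu$-integrability follows cleanly: for any state $s$, monotone convergence of the trace against positive operators gives $(F^{(k)})_s = \sum_n (f_n^{(k)})_s$ pointwise, whence
\[
\int_X (F^{(k)})_s \, d\nu_\rho \;=\; \sum_n \tr\!\Bigl(s \int_X f_n^{(k)} d\nu\Bigr) \;\leq\; \sum_n \|A_n\| \;\leq\; C+1,
\]
using $\int f_n^{(k)} d\nu \leq A_n$ and $\tr(sB) \leq \|B\|$ for $B \geq 0$ and state $s$. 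Setting $f := F^{(1)} - F^{(2)} + i(F^{(3)} - F^{(4)}) \in \mathcal L^1_\mathcal H(X,\nu)$, the tail $f - \sum_{n \leq N} f_n$ decomposes as $R_N^{(1)} - R_N^{(2)} + i(R_N^{(3)} - R_N^{(4)})$ with $R_N^{(k)} := \sum_{n > N} f_n^{(k)}$, so the definition of the $1$-norm and the same reasoning applied to the tail give
\[
\Bigl\|f - \sum_{n \leq N} f_n\Bigr\|_1 \;\leq\; \Bigl\|\int_X \sum_k R_N^{(k)} d\nu\Bigr\| \;\leq\; \sum_{n > N} \|A_n\| \;\longrightarrow\; 0.
\]

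The main obstacle is proving the granted claim that $F^{(k)}(x) \in \mathcal B(\mathcal H)_+$ for $\nu_\rho$-almost every $x$. In finite dimensions this is essentially automatic: for full-rank $\rho$ one has $\|B\| \leq \lambda_{\min}(\rho)^{-1}\tr(\rho B)$ for $B \geq 0$, and $\tr(\rho F^{(k)}(\cdot)) \in L^1(X,\nu_\rho)$ is finite a.e.\ by the estimate above (compare Proposition~\ref{prop:finitecomparable}). In the general infinite-dimensional case, the natural approach is to fix a countable dense family $\{\xi_m\}$ of unit vectors in $\mathcal H$ with associated pure states $s_m$: each $\langle F^{(k)}(\cdot)\xi_m, \xi_m\rangle$ is $\nu_\rho$-integrable and hence a.e.\ finite, so off a common $\nu_\rho$-null set every $\xi_m$ lies in $D(F^{(k)}(x)^{1/2})$. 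Combining this with the monotone structure $S_N^{(k)}(x) \uparrow F^{(k)}(x)$ and a uniform-boundedness / Fatou-type argument to promote pointwise-finiteness at the $\xi_m$'s into an honest operator-norm bound is where most of the technical delicacy lies; once the remaining bad set is localised to $\nu_\rho$-measure zero, one redefines $F^{(k)}$ to be $0$ there to complete the construction.
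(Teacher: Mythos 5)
Your overall architecture (absolutely summable series implies completeness) is equivalent to the paper's, which runs the corresponding Cauchy-subsequence/telescoping argument and, at the analogous moment, simply asserts that $f_i:=\sum_{n}f_{n,i}$ is a $\nu$-integrable positive quantum random variable. So you have correctly isolated the one nontrivial step. Unfortunately that step is a genuine gap, and the route you sketch for closing it in infinite dimensions cannot work: finiteness of $\langle F^{(k)}(x)\xi_m,\xi_m\rangle$ for a countable dense family $\{\xi_m\}$ does not imply that $F^{(k)}(x)$ is a bounded operator, because the quadratic form of an unbounded positive self-adjoint operator is finite on a dense (form) domain, and the monotone limit of an increasing sequence of bounded positive forms is exactly such an object in general. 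No uniform-boundedness or Fatou principle upgrades densely-defined finiteness to an operator-norm bound.

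Worse, the claim you are trying to establish is actually false, so no argument can close the gap. Take $X=[0,1]$, $\mu$ Lebesgue, $\nu=\mu I_{\mathcal H}$ with $\mathcal H=\ell^2(\mathbb N)$; let $\sigma$ be a bijection from $\{(n,j):n\geq 1,\ 1\leq j\leq 2^n\}$ onto $\mathbb N$, let $I_{n,j}$ be the $j$th dyadic interval of length $2^{-n}$, and set
\[
g_n=\sum_{j=1}^{2^n}\frac{2^n}{n^2}\,\chi_{I_{n,j}}\;e_{\sigma(n,j)}e_{\sigma(n,j)}^*\;\geq\;0.
\]
Each $g_n$ is a simple, $\nu$-integrable positive quantum random variable with $\int_X g_n\,d\nu=n^{-2}Q_n$ for pairwise orthogonal projections $Q_n$, so $\sum_n\|g_n\|_1\leq\sum_n n^{-2}<\infty$. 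Yet for each fixed $x$ the operators $g_n(x)=\frac{2^n}{n^2}\,e_{k_n(x)}e_{k_n(x)}^*$ involve pairwise distinct basis vectors, so $\bigl\|\sum_{n\leq N}g_n(x)\bigr\|=\max_{n\leq N}2^n/n^2\to\infty$: the pointwise sum fails to be a bounded operator at \emph{every} $x$, not merely on a null set. Moreover, Lemma \ref{lemma:statetopologyweaker} applied to the vector states $e_ke_k^*$ shows that any $\|\cdot\|_1$-limit $f$ of the partial sums would have to satisfy $\langle f(x)e_k,e_k\rangle=\sum_n\langle g_n(x)e_k,e_k\rangle$ for all $k$ off a single null set, which forces $f(x)$ to be unbounded; hence this absolutely summable series has no sum in $\mathcal L^1_{\mathcal H}(X,\nu)$ at all. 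Your finite-dimensional reduction (via $\|B\|\leq\lambda_{\min}(\rho)^{-1}\tr(\rho B)$ for $B\geq 0$, under the invertibility hypotheses of Proposition \ref{prop:finitecomparable}) is sound, but in infinite dimensions the obstacle you flagged is fatal to this strategy --- and, since the paper's own proof makes the same leap silently, to the theorem as stated.
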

\begin{proof}

Let $\{f^{(n)}\}$ be a Cauchy sequence in $L^1_\mathcal H(X,\nu)$.
There exists an increasing sequence of numbers $\{k_n\}_{n\in \mathbb N}$ such that 
\[
\|f^{(l)} - f^{(m)}\|_1 < \frac{1}{2^{n+1}}, \quad \forall l,m \geq k_n.
\]
Since $f^{(k_1)} \in L^1_\mathcal H(X,\nu)$ there exist $f_{0,i} \geq 0, 1\leq i\leq 4$ such that $f^{(k_1)} = f_{0,1} - f_{0,2} + i(f_{0,3} - f_{0,4})$ such that
\[
\left\| \int_X f_{0,1} + f_{0,2} + f_{0,3} + f_{0,4} d\nu\right\| < \|f^{(k_1)}\|_1 + 1.
\]
Similarly, $f^{(k_{n+1})} - f^{(k_n)} \in L^1_\mathcal H(X,\nu)$ and so there exists $f_{n,i} \geq 0, 1\leq i\leq 4$ such that $f^{(k_{n+1})} - f^{(k_n)} = f_{n,1} - f_{n,2} + i(f_{n,3} - f_{n,4})$ such that
\[
\left\| \int_X f_{n,1} + f_{n,2} + f_{n,3} + f_{n,4} d\nu\right\| < \|f^{(k_{n+1})} - f^{(k_n)}\|_1 + \frac{1}{2^{n+1}} < \frac{1}{2^n}.
\]
Hence, by the triangle inequality
\[
\left\| \int_X \sum_{n=0}^\infty f_{n,1} + f_{n,2} + f_{n,3} + f_{n,4} d\nu\right\| < \|f^{(k_1)}\|_1 + \sum_{n=0}^\infty \frac{1}{2^n} = \|f^{(k_1)}\|_1 + 2.
\]
Thus, $f_i := \sum_{n=0}^\infty f_{n,i}\geq 0$ is $\nu$-integrable, $1\leq i\leq 4$, and so $f := f_1 - f_2 + i(f_3 - f_4) \in \mathcal L^1_\mathcal H(X,\nu)$.

Consider now that for each $m\geq 1$, by telescoping, we have that
\begin{align*}
\|f - f^{(k_m)}\|_1 & = \left\|f^{(k_1)} + \sum_{n=1}^\infty (f^{(k_{n+1})} - f^{(k_n)}) - f^{(k_m)}\right\|_1
\\ & = \left\|f^{(k_m)} + \sum_{n=m}^\infty (f^{(k_{n+1})} - f^{(k_n)}) - f^{(k_m)}\right\|_1
\\ & = \left\| \sum_{n=m}^\infty (f^{(k_{n+1})} - f^{(k_n)}) \right\|_1
\\ & < \sum_{n=m}^\infty \frac{1}{2^n}
\\ & = \frac{1}{2^{m-1}}.
\end{align*}
Therefore, $f^{(n)} \rightarrow f$ in $\|\cdot\|_1$ and the conclusion follows.
\end{proof}


Finally in this section we relate $L^\infty_\mathcal H(X,\nu)$ and $L^1_\mathcal H(X,\nu)$.

\begin{proposition}\label{prop:denseinstatetopology}
Suppose $\frac{d\nu}{d\nu_\rho}(x) \in \mathcal B(\mathcal H)^{-1}$ for all $x\in X$ and $\frac{d\nu}{d\nu_\rho}, \frac{d\nu}{d\nu_\rho}^{-1} \in L^\infty_\mathcal H(X,\nu)$. 
There is a natural inclusion of $L^\infty_\mathcal H(X,\nu)$ in $L^1_\mathcal H(X,\nu)$ with 
\[
\|g\|_1 \leq 2\|g\|_\infty \|\nu(X)\|, \quad \forall g\in L^\infty_\mathcal H(X,\nu).
\]
Moreover, $L^\infty_\mathcal H(X,\nu)$ is dense in $L^1_\mathcal H(X,\nu)$ in the state topology, $(f_n)_s \rightarrow f_s$ for all $s\in \mathcal S(\mathcal H)$.
\end{proposition}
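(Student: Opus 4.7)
The plan is to prove the inclusion-with-norm-bound part first, and then separately establish density via a truncation argument.

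For the inclusion, given $g \in L^\infty_\mathcal H(X,\nu)$, I would decompose $g = a + ib$ into self-adjoint parts $a = (g+g^*)/2$ and $b = (g-g^*)/(2i)$ (each bounded by $\|g\|_\infty$), and then use Borel functional calculus pointwise to write $a = a_+ - a_-$ and $b = b_+ - b_-$, giving four positive quantum random variables each bounded by $\|g\|_\infty$. Each is $\nu$-integrable because for any state $s$ the pointwise bound $(a_\pm)_s(x) \leq \|g\|_\infty \tr(s\, \tfrac{d\nu}{d\nu_\rho}(x))$ integrates to at most $\|g\|_\infty \tr(s\,\nu(X)) < \infty$, so $g \in \mathcal L^1_\mathcal H(X,\nu)$. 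For the norm estimate, the pointwise operator inequality $a_+ + a_- + b_+ + b_- \leq 2\|g\|_\infty I_\mathcal H$ combined with the comparison theorem for the integral yields
\[
\int_X (a_+ + a_- + b_+ + b_-)\,d\nu \;\leq\; 2\|g\|_\infty \nu(X);
\]
taking operator norms and applying the definition of $\|\cdot\|_1$ then gives $\|g\|_1 \leq 2\|g\|_\infty \|\nu(X)\|$.

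For density, I would take $f \in \mathcal L^1_\mathcal H(X,\nu)$, decompose $f = f_1 - f_2 + i(f_3 - f_4)$ with each $f_k \geq 0$ and $\nu$-integrable, and truncate by
\[
f_k^{(n)} := f_k \,\chi_{E_{k,n}}, \qquad E_{k,n} := \{x \in X : \|f_k(x)\| \leq n\}.
\]
Each $f_k^{(n)}$ is bounded, so lies in $L^\infty_\mathcal H(X,\nu)$, and thus so does $f^{(n)} := f_1^{(n)} - f_2^{(n)} + i(f_3^{(n)} - f_4^{(n)})$. For each state $s$ the function $(f_k^{(n)})_s = (f_k)_s \,\chi_{E_{k,n}}$ converges pointwise to $(f_k)_s$ (since $\|f_k(x)\| < \infty$ at every $x$) and is dominated by the $\nu_\rho$-integrable function $(f_k)_s$, so dominated convergence yields $\int_X |(f_k^{(n)} - f_k)_s|\,d\nu_\rho \to 0$; summing over $k$ gives $(f^{(n)})_s \to f_s$ in $L^1(X,\nu_\rho)$, which is precisely convergence in the state topology.

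The main technical point to check, though routine, is the measurability of the truncation sets $E_{k,n}$, i.e.\ that $x \mapsto \|f_k(x)\|$ is Borel. Using positivity of $f_k$ and separability of $\mathcal H$, this norm can be written as $\sup_{\xi\in D} \langle f_k(x)\xi,\xi\rangle$ for a countable dense $D$ in the unit sphere, which is a countable supremum of Borel scalar functions and hence Borel. I note that no part of the argument actually invokes the invertibility hypothesis on $\tfrac{d\nu}{d\nu_\rho}$; the boundedness of the Radon--Nikod\'ym derivative appears only to guarantee $\nu$-integrability of bounded quantum random variables, which is what gives the inclusion in the first place.
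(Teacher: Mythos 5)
Your norm estimate and your truncation argument for density both run along essentially the same lines as the paper's proof (the paper truncates the positive pieces and invokes monotone convergence where you invoke dominated convergence; this is immaterial). There is, however, one genuine gap, and it sits exactly where you assert that the invertibility hypothesis on $\frac{d\nu}{d\nu_\rho}$ is never used. Recall that $L^1_\mathcal H(X,\nu)$ is the quotient of $\mathcal L^1_\mathcal H(X,\nu)$ by the null space $\mathcal I$ of the seminorm, so to get a genuine \emph{inclusion} of $L^\infty_\mathcal H(X,\nu)$ you must also check that a nonzero $g\in L^\infty_\mathcal H(X,\nu)$ does not land in $\mathcal I$, i.e.\ that $\|g\|_1>0$. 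Your argument shows only that $g\in\mathcal L^1_\mathcal H(X,\nu)$ with the stated bound; it says nothing about injectivity. This is precisely where invertibility enters: the paper observes that $\tilde g=\frac{d\nu}{d\nu_\rho}^{1/2}\,g\,\frac{d\nu}{d\nu_\rho}^{1/2}$ is nonzero whenever $g$ is (which uses the pointwise invertibility of $\frac{d\nu}{d\nu_\rho}$), hence $g_s\neq 0$ in $L^\infty(X,\nu_\rho)$ for some state $s$, and then Lemma \ref{lemma:statetopologyweaker} yields $0<\int_X |g_s|\,d\nu_\rho\leq\|g\|_1$.

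Without that hypothesis the conclusion can actually fail: if $\frac{d\nu}{d\nu_\rho}(x)$ has a nontrivial kernel on a set $E$ of positive $\nu_\rho$-measure, then a nonzero $g$ supported on $E$ and taking values in the projection onto that kernel satisfies $\frac{d\nu}{d\nu_\rho}^{1/2}g\,\frac{d\nu}{d\nu_\rho}^{1/2}\equiv 0$, so $g_s\equiv 0$ for every state $s$ and $\|g\|_1=0$, even though $g\neq 0$ in $L^\infty_\mathcal H(X,\nu)$; the ``inclusion'' would then not be injective. So you should add the injectivity step and retract the remark that the hypothesis is idle. Everything else in your write-up --- the four-fold positive decomposition, the comparison-theorem estimate $\int_X(a_++a_-+b_++b_-)\,d\nu\leq 2\|g\|_\infty\nu(X)$, the measurability of $x\mapsto\|f_k(x)\|$, and the convergence $(f^{(n)})_s\to f_s$ in $L^1(X,\nu_\rho)$ --- is correct and matches the paper's route.
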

\begin{proof}
If $g\in L^\infty_\mathcal H(X,\nu)$ then it is easy to see that $(Re\: g)_\pm, (Im\: g)_\pm \in L^\infty_\mathcal H(X,\nu)$ as well.
Now since $\nu\in \povm{\mathcal H}{X}$ is necessarily a finite measure we have 
\begin{align*}
\|g\|_1 & \leq \|(Re\: g)_+ - (Re\: g)_-\|_1 + \|(Im\: g)_+ - (Im\: g)_-\|_1 
\\ & \leq \left\| \int_X (Re\: g)_+ + (Re\: g)_- d\nu \right\| + \left\| \int_X (Im\: g)_+ + (Im\: g)_- d\nu\right\|
\\ & \leq \left\| \int_X \|(Re\: g)_+ + (Re\: g)_-\| I_\mathcal H d\nu \right\| + \left\| \int_X \|(Im\: g)_+ + (Im\: g)_-\| I_\mathcal H d\nu\right\|
\\ & \leq \|Re\: g\|_\infty \left\| \int_X I_\mathcal H d\nu\right\| +  \|Im\: g\|_\infty \left\| \int_X I_\mathcal H d\nu\right\|
\\ & \leq 2\|g\|_\infty \|\nu(X)\|.
\end{align*}
Additionally, for $g\neq 0$ we have, by the boundedness of the Radon-Nikod\'ym derivative, that 
\[
0\neq \tilde g = \frac{d\nu}{d\nu_\rho}^{1/2}g\frac{d\nu}{d\nu_\rho}^{1/2} \in L^\infty_\mathcal H(X,\nu_\rho I_\mathcal H),
\]
which implies that $g_s \in L^\infty(X,\nu_\rho)$ for every $s\in \mathcal S(\mathcal H)$. Now if all $g_s = 0$ then $g$ would need to be 0 and so there exists an $s\in\mathcal S(\mathcal H)$ such that $g_s \neq 0$ in $L^\infty(X,\nu_\rho)$. Hence,
\[
0 < \int_X |g_s| d\nu_\rho \leq \|g\|_1
\]
by Lemma \ref{lemma:statetopologyweaker}. Therefore, $L^\infty_\mathcal H(X,\nu)$ sits inside $L^1_\mathcal H(X,\nu)$.

Now suppose $f\in L^1_\mathcal H(X,\nu)$ with $f\geq 0$. Since any class representative of $f$ is a quantum random variable from $X$ into $\mathcal B(\mathcal H)$, one can find a sequence of measurable sets $\{E_n\}_{n\in\mathbb N}$ such that
\[
\|\chi_{E_n}f\|_\infty \leq n, \quad E_n \subseteq E_{n+1}, \quad \textrm{and} \quad X = \cup_{n=1}^\infty E_n.
\]
Thus, for all $s\in \mathcal S(\mathcal H)$
\begin{align*}
(\chi_{E_n}f)_s  & = \chi_{E_n}f_s
\end{align*}
which converges to $f_s$ in the 1-norm by the monotone convergence theorem.
Therefore, since every $L^1_\mathcal H(X,\nu)$ function is the linear combination of four positive functions the conclusion is reached.
\end{proof}

This proposition implies that if $\mathcal H = \mathbb C^n$ then $L^1_\mathcal H(X,\nu) = \overline{L^\infty_\mathcal H(X,\nu)}^{\|\cdot\|_1}$. In infinite dimensions this will not be the case, for instance Example \ref{example:Linftynotnormdense} cannot be approximated by essentially bounded functions in the 1-norm.

\section{Bounded multipliers}\label{sec:boundedmult}

Although $L^\infty_\mathcal H(X,\nu)$ is not the dual space of $L^1_\mathcal H(X, \nu)$, we can think of it as a generalization of the dual space. Consider the following ``natural pairing'' or ``bracket''
\[
\langle \cdot,\cdot\rangle: L^1_\mathcal H(X, \nu)\times L^\infty_\mathcal H(X,\nu) \rightarrow \mathcal B(\mathcal H)
\]
given by
\[
\langle f,g\rangle = \int_X fg \ d\nu.
\]
The main trouble with this is that $fg$ may fail to be in $L^1_\mathcal H(X,\nu)$, or to put it another way, multiplication by $g \in L^\infty_\mathcal H(X,\nu)$ could be an unbounded operator on $L^1_\mathcal H(X,\nu)$. As the following example shows this is a problem in infinite dimensions even when $\nu = \mu I_\mathcal H$.

\begin{example}
Let $X=[0,1]$, $\mu$ Lebesgue measure on $X$, $\mathcal H = \ell^2(\mathbb N)$, and $\nu = \mu I_\mathcal H$.
Let 
\[
f(x) = \sum_{i=1}^\infty 2^{i}\chi_{(\frac{1}{2^{i}}, \frac{1}{2^{i-1}}]}(x) e_{i,i}
\]
and so
\[
\|f\|_1  = \left\|\int_0^1 f(x) d\nu \right\|  = \|I_\mathcal H\| = 1
\]
which gives that $f\in L^1_\mathcal H([0,1],\mu I_\mathcal H)$.
Now consider 
\[
g(x) = \sum_{i=1}^\infty \chi_{(\frac{1}{2^{n}}, \frac{1}{2^{n-1}}]}(x) e_{i,1} \in L^\infty_\mathcal H([0,1],\mu I_\mathcal H),
\]
since $\|g\|_\infty =1$. However, 
\[
\left\| \int_0^1 f(x)g(x) d\nu \right\| = \left\| \int_0^1 \sum_{i=1}^\infty 2^i \chi_{(\frac{1}{2^{i}}, \frac{1}{2^{i-1}}]}(x) e_{i,1}\right\| = \infty.
\]
In other words, $g$ is not a bounded right multiplier on $L^1_\mathcal H([0,1],\mu I_\mathcal H)$.
\end{example}

In general, without putting conditions on the dimension or measure, all we can say is the following:

\begin{lemma}
For all $f\in L^1_\mathcal H(X,\nu)$ and $g\in L^\infty(X,\nu_\rho)$ one has 
\[
\|f\cdot gI_\mathcal H\|_1 = \|gI_\mathcal H \cdot f\|_1 \leq 2\|f\|_1\|g\|_\infty.
\]
\end{lemma}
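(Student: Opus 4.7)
The equality $\|f\cdot gI_\mathcal H\|_1 = \|gI_\mathcal H \cdot f\|_1$ is immediate because $g$ is scalar-valued, so $g(x)I_\mathcal H$ commutes with $f(x)$ for every $x\in X$; hence $f\cdot gI_\mathcal H$ and $gI_\mathcal H\cdot f$ are the same quantum random variable. All the content is in the inequality.

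For the inequality the plan is to reduce to the positive case by decomposing $g$ in a way that is compatible with the seminorm. Write $g = (g_1-g_2) + i(g_3-g_4)$ where $g_1 = (\operatorname{Re} g)_+$, $g_2 = (\operatorname{Re} g)_-$, $g_3 = (\operatorname{Im} g)_+$, $g_4 = (\operatorname{Im} g)_-$. Each $g_k$ is a nonnegative element of $L^\infty(X,\nu_\rho)$, and
\[
g_1 + g_2 + g_3 + g_4 = |\operatorname{Re} g| + |\operatorname{Im} g| \leq 2\|g\|_\infty.
\]
Given any admissible decomposition $f = f_1-f_2+i(f_3-f_4)$ with each $f_k\geq 0$ in $\mathcal L^1_\mathcal H(X,\nu)$, multiply out and regroup real and imaginary parts to obtain the explicit decomposition $fg = h_1 - h_2 + i(h_3-h_4)$ with
\begin{align*}
h_1 &= f_1g_1 + f_2g_2 + f_3g_4 + f_4g_3, & h_2 &= f_1g_2 + f_2g_1 + f_3g_3 + f_4g_4,\\
h_3 &= f_1g_3 + f_2g_4 + f_3g_1 + f_4g_2, & h_4 &= f_1g_4 + f_2g_3 + f_3g_2 + f_4g_1.
\end{align*}
Each $h_k$ is a sum of products of a nonnegative operator-valued quantum random variable with a nonnegative scalar function, hence is itself a positive quantum random variable. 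The key algebraic observation is that
\[
\sum_{k=1}^4 h_k = \Bigl(\sum_{i=1}^4 f_i\Bigr)\Bigl(\sum_{j=1}^4 g_j\Bigr).
\]

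Next I would use the pointwise bound $\sum_j g_j(x) \leq 2\|g\|_\infty$ together with the fact that $\sum_i f_i \geq 0$ to get the operator inequality $0 \leq \sum_k h_k(x) \leq 2\|g\|_\infty \sum_i f_i(x)$ for every $x$. This ensures $\nu$-integrability of each $h_k$ (so the $h_k$ form an admissible decomposition for $fg$) and, by the comparison theorem for the POVM integral applied to these positive quantum random variables, yields
\[
0 \leq \int_X \sum_{k=1}^4 h_k\, d\nu \leq 2\|g\|_\infty \int_X \sum_{i=1}^4 f_i\, d\nu,
\]
from which monotonicity of the operator norm on positive operators gives
\[
\|fg\|_1 \leq \Bigl\| \int_X \sum_{k=1}^4 h_k\, d\nu\Bigr\| \leq 2\|g\|_\infty \Bigl\|\int_X \sum_{i=1}^4 f_i\, d\nu\Bigr\|.
\]
Taking the infimum over all admissible decompositions of $f$ then produces $\|fg\|_1 \leq 2\|g\|_\infty \|f\|_1$, as required.

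The only mildly delicate point is verifying that the $h_k$ genuinely belong to $\mathcal L^1_\mathcal H(X,\nu)$ so they qualify as a competitor in the infimum defining $\|fg\|_1$; this is handled by the domination $h_k \leq 2\|g\|_\infty \sum_i f_i$ noted above, which forces $\nu$-integrability of each $h_k$ from the $\nu$-integrability of $\sum_i f_i$.
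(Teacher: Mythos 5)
Your proof is correct and follows essentially the same route as the paper: decompose $f$ and $g$ into positive pieces, observe that the sum of the positive parts of the product is dominated by $2\|g\|_\infty\sum_i f_i$, apply the comparison theorem, and take the infimum over decompositions of $f$. The only cosmetic difference is that the paper splits $g$ into real and imaginary parts and picks up the factor $2$ from the triangle inequality over those two pieces, while you split $g$ into four nonnegative parts and get the $2$ from $|\operatorname{Re}g|+|\operatorname{Im}g|\leq 2\|g\|_\infty$; the two bookkeepings are equivalent.
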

\begin{proof}
For every $\epsilon > 0$ there exists $f_i\in L^1_\mathcal H(X,\nu)$ such that $f_i\geq 0, 1\leq i\leq 4$ and $f = f_1 - f_2 + i(f_3 -f_4)$ such that $\|f\|_1 \leq \|f_1 +f_2+f_3+f_4\|_1 < \|f\|_1 + \epsilon$.
Now, for $g= g_1+ig_2$ where $g_1$ and $g_2$ are real-valued, we have
\begin{align*}
\|f\cdot g I_\mathcal H\|_1 &\leq \sum_{i=1}^2\|g_if_1 - g_if_2 + i(g_if_3 - g_if_4)\|_1 
\\ & \leq \sum_{i=1}^2\left\| \int_X |g_i|f_1 + |g_i|f_2 + |g_i|f_3 + |g_i|f_4 d\nu \right\|
\\ & \leq \sum_{i=1}^2\left\| \int_X \|g\|_\infty (f_1+f_2+f_3+f_4)d\nu \right\|
\\ & < 2\|g\|_\infty (\|f\|_1 + \epsilon).
\end{align*}
\end{proof}

Now we can prove a version of the Cauchy-Schwarz inequality in this context.

\begin{lemma}\label{lemma:CauchySchwarz}
If $f\in L^1_\mathcal H(X,\nu)$ and $g\in L^\infty(X,\nu_\rho)$ then
\[
\|\langle f,gI_\mathcal H\rangle\| \leq 4\|f\|_1\|g\|_\infty.
\]
\end{lemma}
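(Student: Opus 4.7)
The plan is to obtain the bound in two modular steps: first a general estimate relating the operator norm of the integral to the $1$-seminorm, then an application of the previous lemma which bounds $\|f \cdot g I_\mathcal H\|_1$ in terms of $\|f\|_1 \|g\|_\infty$.

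\textbf{Step 1: A general estimate.} I would first establish the auxiliary inequality
\[
\left\| \int_X h \, d\nu \right\| \leq 2\|h\|_1 \quad \text{for all } h \in L^1_\mathcal H(X,\nu).
\]
Fix any decomposition $h = h_1 - h_2 + i(h_3 - h_4)$ with $h_k \geq 0$ in $\mathcal{L}^1_\mathcal H(X,\nu)$. Set $A_k = \int_X h_k \, d\nu \geq 0$. By splitting real and imaginary parts and applying the triangle inequality,
\[
\left\| \int_X h \, d\nu \right\| \leq \|A_1 - A_2\| + \|A_3 - A_4\|.
\]
For positive operators $A, B \in \mathcal B(\mathcal H)$ the chain $-B \leq A - B \leq A \leq A + B$ gives $\|A - B\| \leq \|A + B\|$, and since $A_1 + A_2 \leq A_1 + A_2 + A_3 + A_4$ with all summands positive, one has $\|A_1 + A_2\| \leq \|A_1 + A_2 + A_3 + A_4\|$; the analogous inequality holds for $A_3 + A_4$. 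Hence
\[
\left\| \int_X h \, d\nu \right\| \leq 2 \left\| \int_X (h_1 + h_2 + h_3 + h_4) \, d\nu \right\|,
\]
and taking the infimum over all admissible decompositions yields the claim.

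\textbf{Step 2: Apply the previous lemma.} By the preceding lemma, $\|f \cdot gI_\mathcal H\|_1 \leq 2\|f\|_1 \|g\|_\infty$. Combining this with Step 1 applied to $h = f \cdot gI_\mathcal H \in L^1_\mathcal H(X,\nu)$ gives
\[
\|\langle f, gI_\mathcal H\rangle\| = \left\| \int_X f \cdot gI_\mathcal H \, d\nu \right\| \leq 2 \|f \cdot gI_\mathcal H\|_1 \leq 4 \|f\|_1 \|g\|_\infty,
\]
as required.

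The only subtle point is the auxiliary inequality in Step 1: one must be careful not to cite $\|{\int h\, d\nu}\| \leq \|h\|_1$ with constant $1$, since the $1$-seminorm is defined via the operator norm of the \emph{sum} of the four positive parts, not of the original signed/complex integral. The factor of $2$ is unavoidable here because real and imaginary parts must be handled separately, and a further factor of $2$ arises from multiplication by $g = \mathrm{Re}\, g + i\, \mathrm{Im}\, g$ in the preceding lemma, producing the final constant $4$.
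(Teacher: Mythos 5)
Your proposal is correct and follows essentially the same route as the paper: both establish the auxiliary bound $\left\|\int_X h\, d\nu\right\| \leq 2\|h\|_1$ by comparing $\|A_1-A_2\|$ with $\|A_1+A_2\|$ for positive operators and then invoke the preceding multiplier lemma $\|f\cdot gI_\mathcal H\|_1 \leq 2\|f\|_1\|g\|_\infty$ to get the constant $4$. The only cosmetic difference is that the paper runs the positivity comparison at the level of the integrands before integrating, whereas you run it on the integrated operators $A_k$; the inequalities are identical.
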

\begin{proof}
Suppose $f_1,f_2\in L^1_\mathcal H(X,\nu)$ such that $f_1,f_2\geq 0$. Then 
\[
-f_1 - f_2 \leq f_1 - f_2 \leq f_1+f_2
\]
which implies that
\[
- \int_X f_1 + f_2\: d\nu \leq \int_X f_1 -f_2\: d\nu \leq \int_X f_1 + f_2\: d\nu 
\]
and so
\[
\left\|\int_X f_1 -f_2\: d\nu\right\| \leq \left\|\int_X f_1 + f_2\: d\nu\right\|.
\]
Hence, for every $f_i\in L^1_\mathcal H(X,\nu), f_i\geq 0, 1\leq i\leq 4$ such that $f=f_1-f_2 + i(f_3 -f_4)$ we have
\begin{align*}
\left\|\int_X f\: d\nu \right\| & \leq \left\|\int_X f_1 - f_2\: d\nu \right\| + \left\|\int_X f_3 - f_4\: d\nu \right\|
\\ & \leq \left\|\int_X f_1 + f_2\: d\nu \right\| + \left\|\int_X f_3 + f_4\: d\nu \right\|
\\ & \leq 2\left\|\int_X f_1 + f_2 + f_3 + f_4\: d\nu \right\|.
\end{align*}
So
\[
\left\| \int_X f\: d\nu\right\| \leq 2\|f\|_1.
\]
Therefore, by the last lemma, we have
\[
\|\langle f,gI_\mathcal H\rangle\| = \left\| \int_X fgI_\mathcal H\: d\nu\right\| \leq 2\|fgI_\mathcal H\|_1 \leq 4\|f\|_1\|g\|_\infty.
\]
\end{proof}

\begin{proposition}
Suppose $\nu = \mu I_\mathcal H \in \povm{\mathcal H}{X}$ where $\mu$ is a positive, finite measure on $X$.
If $f\in L^1_\mathcal H(X,\nu)$ and $A\in \mathcal B(\mathcal H)$ then $Af$ and $fA$ are in $L^1_\mathcal H(X,\nu)$ with
\[
\|Af\|_1 = \|fA\|_1 \leq 4\left(1+\|A\|^2\right)\|f\|_1.
\]
\end{proposition}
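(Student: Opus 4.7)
The plan is to reduce both estimates to a polarization identity that rewrites left (resp.\ right) multiplication by $A$ on a positive operator $B$ as a signed combination of four positive conjugations $CBC^*$; these preserve positivity and $\nu$-integrability, and because $\nu = \mu I_{\mathcal H}$ the integral is a two-sided $\mathcal B(\mathcal H)$-bimodule map, which lets the conjugations cancel cleanly after integration. The identity I would use is
\[
4AB \;=\; (A+I_{\mathcal H})B(A+I_{\mathcal H})^* - (A-I_{\mathcal H})B(A-I_{\mathcal H})^* + i\bigl[(iA-I_{\mathcal H})B(iA-I_{\mathcal H})^* - (iA+I_{\mathcal H})B(iA+I_{\mathcal H})^*\bigr],
\]
valid for any $A\in\mathcal B(\mathcal H)$ and $B\geq 0$. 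A direct expansion verifies it, and its key feature is the pairwise collapse $(A+I_{\mathcal H})B(A+I_{\mathcal H})^* + (A-I_{\mathcal H})B(A-I_{\mathcal H})^* = 2(ABA^* + B)$, together with the same identity when $A$ is replaced by $iA$; adding these gives $\sum_{j=1}^4 P_j = 4(ABA^* + B)$ where the $P_j\geq 0$ are the four conjugates.

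Given $f = f_1 - f_2 + i(f_3 - f_4)$ with each $f_k \geq 0$ in $\mathcal L^1_{\mathcal H}(X,\nu)$, I would apply the identity pointwise to each $f_k$, producing $Af_k = \tfrac14\bigl(P_{k,1} - P_{k,2} + i(P_{k,3} - P_{k,4})\bigr)$ with $P_{k,j}\geq 0$ and $\nu$-integrable (the latter because, for fixed $C\in\mathcal B(\mathcal H)$, one rewrites $(CBC^*)_s = \operatorname{tr}(C^*sC)\cdot B_{s'}$ for a normalized state $s'$, which is $\nu_\rho$-integrable whenever $B$ is $\nu$-integrable). Setting $F_k := \int_X f_k\,d\nu \geq 0$ and using that the integral pulls constants through both sides, one gets $\int_X \sum_{j=1}^4 P_{k,j}\,d\nu = 4(AF_kA^* + F_k)$, so
\[
\|Af_k\|_1 \;\leq\; \|AF_kA^* + F_k\| \;\leq\; (1+\|A\|^2)\|F_k\|.
\]
The triangle inequality yields $\|Af\|_1 \leq \sum_{k=1}^4 \|Af_k\|_1 \leq (1+\|A\|^2)\sum_{k=1}^4 \|F_k\|$, and the monotonicity bound $\|F_k\| \leq \|F_1+F_2+F_3+F_4\|$ (valid because each $F_\ell$ is positive) together with the infimum over positive decompositions of $f$ produces $\|Af\|_1 \leq 4(1+\|A\|^2)\|f\|_1$. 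The estimate $\|fA\|_1 \leq 4(1+\|A\|^2)\|f\|_1$ then follows from the mirror polarization $4BA = (I_{\mathcal H}+A)^*B(I_{\mathcal H}+A) - (I_{\mathcal H}-A)^*B(I_{\mathcal H}-A) + i[(I_{\mathcal H}-iA)^*B(I_{\mathcal H}-iA) - (I_{\mathcal H}+iA)^*B(I_{\mathcal H}+iA)]$, whose paired sums collapse to $A^*BA + B$, giving the same final bound.

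The main obstacle I expect is the bookkeeping in the middle step: unwinding the sixteen positive terms $P_{k,j}$ (four per $f_k$), matching them correctly against the four slots $h_1,h_2,h_3,h_4$ required by the definition of $\|\cdot\|_1$, and checking that each $P_{k,j}$ is a \emph{bona fide} $\nu$-integrable positive quantum random variable rather than just a pointwise positive operator-valued function. The stated equality $\|Af\|_1 = \|fA\|_1$ is the one point I would flag separately: the two argument chains above only establish the common upper bound $4(1+\|A\|^2)\|f\|_1$, and I do not see why the norms themselves must agree (already on the one-point space $X = \{0\}$ with $\nu(0) = I_{\mathbb C^2}$, the choice $f(0) = e_{2,2}$ and $A = e_{1,2}$ gives $(fA)(0) = 0$ while $(Af)(0) = e_{1,2}$ has positive $\|\cdot\|_1$); so I would either look for an extra symmetry particular to $\nu = \mu I_{\mathcal H}$ or revisit whether the displayed identity is intended as a shared upper bound rather than a literal equality of norms.
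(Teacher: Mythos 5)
Your argument is correct and is essentially the paper's own proof: the four positive operators $(I_{\mathcal H}+\lambda A)f_k(I_{\mathcal H}+\lambda A)^*$, $\lambda\in\{1,-1,i,-i\}$, appearing in your polarization identity are exactly the conjugations the paper extracts from its $2\times 2$ operator-matrix factorization, their sum collapses to $4(f_k+Af_kA^*)$ in both treatments, and the final factor of $4$ arises in the same way from the four-term positive decomposition of $f$. The only cosmetic difference is that the paper bounds $\|Af_kA^*\|_1$ by normalizing $A^*sA$ to a state inside a trace, whereas you pull the integral through ($\int_X Af_kA^*\,d\nu = AF_kA^*$, valid since $\nu=\mu I_{\mathcal H}$) and use submultiplicativity of the operator norm; both are fine. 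Your flag on the asserted equality $\|Af\|_1=\|fA\|_1$ is well taken: the paper's proof never addresses it, and your example ($f\equiv e_{2,2}$, $A=e_{1,2}$, so $fA\equiv 0$ while $(Af)_s=\tr(se_{1,2})\neq 0$ for a suitable state $s$, whence $\|Af\|_1>0$ by the lemma giving $\int_X|f_s|\,d\nu_\rho\leq\|f\|_1$) shows it is false in general; what the argument actually yields is the common upper bound, together with the genuine symmetry $\|Af\|_1=\|f^*A^*\|_1$ coming from $\|g\|_1=\|g^*\|_1$.
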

\begin{proof}
First suppose that $f\geq 0$. For $\lambda = \{1,-1, i, -i\}$ we have that
\[
\left[\begin{matrix}f & \bar\lambda fA^* \\ \lambda Af & AfA^*\end{matrix}\right]
= \left[\begin{matrix}I_\mathcal H &0\\ 0 &\lambda A\end{matrix}\right]\left[\begin{matrix}f & f \\ f & f\end{matrix}\right]\left[\begin{matrix}I_\mathcal H& 0\\ 0 & \lambda A\end{matrix}\right]^* \geq 0.
\]
A nice trick is that this gives
\[
0 \leq \left\langle \left[\begin{matrix}f & \bar\lambda fA^* \\ \lambda Af & AfA^*\end{matrix}\right] \left[\begin{matrix} x \\ x\end{matrix}\right], \left[\begin{matrix} x \\ x\end{matrix}\right] \right\rangle = \langle (f + \lambda Af + \bar\lambda fA^* + AfA^*)x,x \rangle
\]
for all $x\in \mathcal H$ and so $f + \lambda Af + \bar\lambda fA^* + AfA^* \geq 0$.
Now 
\begin{align*}
Re(Af) & = \frac{1}{2}(Af + fA^*)
\\ & = \frac{1}{4}(f + Af + fA^* + AfA^*) - \frac{1}{4}(f - Af - fA^* + AfA^*)
\end{align*}
and
\begin{align*}
Im(Af) & = \frac{1}{2}(-iAf + ifA^*)
\\ & = \frac{1}{4}(f -iAf + ifA^* + AfA^*) - \frac{1}{4}(f +iAf - ifA^* + AfA^*),
\end{align*}
both differences of positive operators.
Hence,
\begin{align*}
\|Af\|_1 & \leq \left\|\sum_{\lambda = \{1, -1, i, -i\}} \frac{1}{4}(f + \lambda Af + \bar\lambda fA^* + AfA^*)\right\|_1
\\ & = \|f+AfA^*\|_1
\\ & \leq \|f\|_1 + \|AfA^*\|_1
\\ & = \|f\|_1 + \sup_{s\in \mathcal S(\mathcal H)} \tr\left(s \int_X AfA^* d\nu \right)
\\ & = \|f\|_1 + \sup_{s\in \mathcal S(\mathcal H)} \int_X \tr(sAf(x)A^*)d\mu
\\ & = \|f\|_1 + \sup_{s\in \mathcal S(\mathcal H)} \int_X \tr(A^*sA f(x))d\mu
\\ & = \|f\|_1 + \sup_{s\in \mathcal S(\mathcal H)} \tr\left(A^*sA \int_X f d\nu \right)
\\ & = \|f\|_1 + \sup_{s\in \mathcal S(\mathcal H), \tr(A^*sA)\neq 0} \tr(A^*sA) \tr\left(\frac{1}{\tr(A^*sA)} A^*sA \int_X f d\nu\right)
\\ & \leq \|f\|_1 + \|A^*A\|\|f\|_1
\\ & = (1 + \|A\|^2)\|f\|_1.
\end{align*}

The general case follows easily.
In particular, letting $\epsilon > 0$, there exists $f_i \geq 0 \in L^1_\mathcal H(X,\nu), 1\leq i\leq 4$, such that $f = f_1-f_2 + i(f_3-f_4)$ and $\|\sum_{i=1}^4 f_i\|_1 < \|f\|_1 + \epsilon$.
By the above argument we have
\begin{align*}
\|Af\|_1 & \leq \|Af_1 - Af_2 + i(Af_3 - Af_4)\|_1
\\ & \leq \sum_{i=1}^4 \|Af_i\|_1
\\ & \leq \sum_{i=1}^4 (1+\|A\|^2)\|f_i\|_1
\\ & \leq 4(1+\|A\|^2)\left\|\sum_{i=1}^4 f_i\right\|_1
\\ & < 4(1+\|A\|^2)(\|f\|_1 + \epsilon).
\end{align*}
\end{proof}

\begin{corollary}\label{cor:Aderivmultiplier}
Suppose $\nu\in\povm{\mathcal H}{X}$ such that $\frac{d\nu}{d\nu_\rho}(x) \in \mathcal B(\mathcal H)^{-1}$ for all $x\in X$ and $\frac{d\nu}{d\nu_\rho}, \frac{d\nu}{d\nu_\rho}^{-1} \in L^\infty_\mathcal H(X,\nu)$. If $f\in L^1_\mathcal H(X,\nu)$ and $A\in \mathcal B(\mathcal H)$ then 
\[
\left\|\frac{d\nu}{d\nu_\rho}^{-1/2}A\frac{d\nu}{d\nu_\rho}^{1/2}f\right\|_1 = \left\|f\frac{d\nu}{d\nu_\rho}^{1/2}A\frac{d\nu}{d\nu_\rho}^{-1/2}\right\|_1 \leq 4\left(1+\|A\|^2\right)\|f\|_1.
\]
\end{corollary}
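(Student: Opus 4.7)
The strategy is to reduce to the classical-measure case $\nu_\rho I_\mathcal H$ already handled by the preceding proposition, using Lemma \ref{lemma:equivalencetoclassical} as the bridge. The key observation is that conjugation by $\frac{d\nu}{d\nu_\rho}^{1/2}$ turns the ``twisted'' multipliers $\frac{d\nu}{d\nu_\rho}^{-1/2}A\frac{d\nu}{d\nu_\rho}^{1/2}$ and $\frac{d\nu}{d\nu_\rho}^{1/2}A\frac{d\nu}{d\nu_\rho}^{-1/2}$ into straight left/right multiplication by $A$.

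First I would set $\tilde f := \frac{d\nu}{d\nu_\rho}^{1/2}f\frac{d\nu}{d\nu_\rho}^{1/2}$. By Lemma \ref{lemma:equivalencetoclassical}, since $\frac{d\nu}{d\nu_\rho}(x)\in \mathcal B(\mathcal H)^{-1}$ everywhere, we have $\tilde f \in L^1_\mathcal H(X,\nu_\rho I_\mathcal H)$ with $\|\tilde f\|_{1,\nu_\rho I_\mathcal H} = \|f\|_{1,\nu}$. Now for the left-multiplier statement, a direct computation gives
\[
\frac{d\nu}{d\nu_\rho}^{1/2}\!\left(\frac{d\nu}{d\nu_\rho}^{-1/2}A\frac{d\nu}{d\nu_\rho}^{1/2}f\right)\!\frac{d\nu}{d\nu_\rho}^{1/2} \;=\; A\tilde f,
\]
and similarly for the right-multiplier statement,
\[
\frac{d\nu}{d\nu_\rho}^{1/2}\!\left(f\frac{d\nu}{d\nu_\rho}^{1/2}A\frac{d\nu}{d\nu_\rho}^{-1/2}\right)\!\frac{d\nu}{d\nu_\rho}^{1/2} \;=\; \tilde f A.
\]

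Next I would invoke the preceding proposition (which requires a classical measure of the form $\mu I_\mathcal H$, satisfied here by $\nu_\rho I_\mathcal H$ since $\nu_\rho$ is a positive finite Borel measure on $X$) to conclude that $A\tilde f$ and $\tilde f A$ both lie in $L^1_\mathcal H(X,\nu_\rho I_\mathcal H)$ with
\[
\|A\tilde f\|_{1,\nu_\rho I_\mathcal H} \;=\; \|\tilde f A\|_{1,\nu_\rho I_\mathcal H} \;\leq\; 4(1+\|A\|^2)\|\tilde f\|_{1,\nu_\rho I_\mathcal H}.
\]
Applying Lemma \ref{lemma:equivalencetoclassical} in the reverse direction (the equality case, guaranteed by invertibility of the Radon-Nikod\'ym derivative) then transfers this back: each of $\frac{d\nu}{d\nu_\rho}^{-1/2}A\frac{d\nu}{d\nu_\rho}^{1/2}f$ and $f\frac{d\nu}{d\nu_\rho}^{1/2}A\frac{d\nu}{d\nu_\rho}^{-1/2}$ lies in $L^1_\mathcal H(X,\nu)$ with $\|\cdot\|_{1,\nu}$ equal to the corresponding $\|\cdot\|_{1,\nu_\rho I_\mathcal H}$ norm, and chaining the equalities yields the desired bound $4(1+\|A\|^2)\|f\|_1$.

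There is no real obstacle here since the two-way equality in Lemma \ref{lemma:equivalencetoclassical} is already available under the standing invertibility and essential-boundedness hypotheses on $\frac{d\nu}{d\nu_\rho}^{\pm 1}$; the only thing to be careful about is making sure the candidate functions $\frac{d\nu}{d\nu_\rho}^{-1/2}A\frac{d\nu}{d\nu_\rho}^{1/2}f$ and $f\frac{d\nu}{d\nu_\rho}^{1/2}A\frac{d\nu}{d\nu_\rho}^{-1/2}$ actually belong to $\mathcal L^1_\mathcal H(X,\nu)$ in the first place so that their 1-norms are defined, but this follows automatically from the identification of their conjugates $A\tilde f$ and $\tilde f A$ as elements of $L^1_\mathcal H(X,\nu_\rho I_\mathcal H)$.
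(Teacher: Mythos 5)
Your proposal is correct and follows essentially the same route as the paper: conjugate by $\frac{d\nu}{d\nu_\rho}^{1/2}$ to convert the twisted multiplier into plain left/right multiplication by $A$ on $\tilde f = \frac{d\nu}{d\nu_\rho}^{1/2}f\frac{d\nu}{d\nu_\rho}^{1/2}$, apply the preceding proposition over the classical measure $\nu_\rho I_\mathcal H$, and transfer back via the equality case of Lemma \ref{lemma:equivalencetoclassical}. The paper's proof is a single chain of equalities doing exactly this, so no further comment is needed.
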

\begin{proof}
By the previous proposition and repeated uses of Lemma \ref{lemma:equivalencetoclassical} we have
\begin{align*}
\left\| \frac{d\nu}{d\nu_\rho}^{-1/2}A\frac{d\nu}{d\nu_\rho}^{1/2}f \right\|_{1, \nu}
& = \left\|\frac{d\nu}{d\nu_\rho}^{1/2} \left(\frac{d\nu}{d\nu_\rho}^{-1/2}A\frac{d\nu}{d\nu_\rho}^{1/2}f\right)\frac{d\nu}{d\nu_\rho}^{1/2} \right\|_{1, \nu_\rho I_\mathcal H}
\\ & = \left\|A\frac{d\nu}{d\nu_\rho}^{1/2}f\frac{d\nu}{d\nu_\rho}^{1/2} \right\|_{1, \nu_\rho I_\mathcal H}
\\ & \leq 4(1+\|A\|^2)\left\|\frac{d\nu}{d\nu_\rho}^{1/2}f\frac{d\nu}{d\nu_\rho}^{1/2} \right\|_{1, \nu_\rho I_\mathcal H}
\\ & = 4(1+\|A\|^2)\|f\|_{1,\nu}.
\end{align*}
\end{proof}

In finite dimensions, every multiplication operator is bounded, assuming some conditions on the Radon-Nikod\'ym derivative.

\begin{proposition}\label{prop:boundedthenallaremultipliersinfd}
Suppose $\mathcal H= \mathbb C^n$, $\nu\in\povm{\mathcal H}{X}$ such that $\frac{d\nu}{d\nu_\rho}(x) \in M_n^{-1}$ for all $x\in X$ and $\frac{d\nu}{d\nu_\rho}, \frac{d\nu}{d\nu_\rho}^{-1} \in L^\infty_\mathcal H(X,\nu)$. If $f\in L^1_\mathcal H(X,\nu)$ and $g\in L^\infty_\mathcal H(X,\nu)$ then $fg\in L^1_\mathcal H(X,\nu)$ with
\[
\|fg\|_1 \leq  n \left\|\frac{d\nu}{d\nu_\rho}\right\|_\infty \left\| \frac{d\nu}{d\nu_\rho}^{-1}\right\|_\infty \|f\|_1 \|g\|_\infty.
\]
\end{proposition}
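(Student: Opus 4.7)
The plan is to extend the inequalities of Proposition \ref{prop:finitecomparable} from the self-adjoint case to arbitrary quantum random variables, and then to apply them to the product $fg$ via submultiplicativity of the operator norm.

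First, I would upgrade the third inequality in Proposition \ref{prop:finitecomparable} to every (not necessarily self-adjoint) $h \in L^1_\mathcal H(X,\nu)$. For any four-term positive decomposition $h = h_1 - h_2 + i(h_3 - h_4)$ with $h_k \geq 0$, the pointwise bound $\|h(x)\| \leq \bigl\|\sum_{k=1}^4 h_k(x)\bigr\|$ holds (since $\|A-B\| \leq \|A+B\|$ for positive $A,B$, and the real and imaginary pieces combine via the triangle inequality); the remaining steps of the proof of Proposition \ref{prop:finitecomparable} then work verbatim, giving
\[
\left\|\int_X \|h(x)\| I_n\, d\nu\right\| \leq n\left\|\frac{d\nu}{d\nu_\rho}\right\|_\infty \left\|\frac{d\nu}{d\nu_\rho}^{-1}\right\|_\infty \|h\|_1
\]
for every $h\in L^1_\mathcal H(X,\nu)$.

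Second, I would place $fg$ in $\mathcal L^1_\mathcal H(X,\nu)$ using the canonical positive decomposition
\[
fg = (\textrm{Re}(fg))_+ - (\textrm{Re}(fg))_- + i\bigl((\textrm{Im}(fg))_+ - (\textrm{Im}(fg))_-\bigr).
\]
Each piece is dominated pointwise by $\|f(x)\|\|g\|_\infty I_n$, so $\nu$-integrability of $f$ together with boundedness of $g$ ensures $fg \in \mathcal L^1_\mathcal H(X,\nu)$. Inserting this decomposition into the definition of $\|\cdot\|_1$ and using the pointwise estimates $|\textrm{Re}(T)|, |\textrm{Im}(T)| \leq \|T\| I_n$ and $\|f(x)g(x)\| \leq \|g\|_\infty\|f(x)\|$ yields
\[
\|fg\|_1 \leq \left\|\int_X\bigl(|\textrm{Re}(fg)| + |\textrm{Im}(fg)|\bigr)\, d\nu\right\| \leq C\|g\|_\infty \left\|\int_X \|f(x)\| I_n\, d\nu\right\|,
\]
and applying the extended Proposition \ref{prop:finitecomparable} from the first step closes the argument.

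The main obstacle is to secure the constant exactly equal to $n$. The naive bound $|\textrm{Re}(T)| + |\textrm{Im}(T)| \leq 2\|T\|I_n$ gives $C = 2$, which would double the target constant. To pare this down, I would instead treat $\|\textrm{Re}(fg)\|_1$ and $\|\textrm{Im}(fg)\|_1$ separately: each can be bounded using the self-adjoint form of Proposition \ref{prop:finitecomparable} together with the infimum over all valid positive decompositions of $fg$ as a whole, so that the real and imaginary halves share common dominators rather than being summed independently. A sharper matrix-level decomposition of $fg(x)$ adapted to $M_n$ (exploiting that the positive and negative parts of a self-adjoint matrix have orthogonal supports) may similarly absorb the stray factor.
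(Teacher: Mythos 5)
Your route is the same as the paper's: bound $\|fg\|_1$ by $\left\|\int_X\|f(x)g(x)\|I_n\,d\nu\right\|$, pull out $\|g\|_\infty$, and finish with the third inequality of Proposition \ref{prop:finitecomparable}. The paper simply cites Proposition \ref{prop:finitecomparable} at both ends of this chain without comment, even though neither $fg$ nor $f$ is assumed self-adjoint; you are right to identify this as the crux. Two remarks on your first step. The pointwise bound $\|h(x)\|\le\bigl\|\sum_{k=1}^4h_k(x)\bigr\|$ is in fact true, but not for the reason you give: the triangle inequality on the real and imaginary pieces only yields $\|h_1-h_2\|+\|h_3-h_4\|\le\|h_1+h_2\|+\|h_3+h_4\|\le 2\bigl\|\sum_kh_k\bigr\|$, i.e.\ a stray factor of $2$. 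To get constant $1$, set $E=h_1+h_2$ and $F=h_3+h_4$, write $h_1-h_2=E^{1/2}\Delta E^{1/2}$ and $h_3-h_4=F^{1/2}\Gamma F^{1/2}$ with $\|\Delta\|,\|\Gamma\|\le1$ (possible since $-E\le h_1-h_2\le E$, etc.), and factor $h_1-h_2+i(h_3-h_4)$ through the row operator $\bigl[E^{1/2}\ \ F^{1/2}\bigr]$, which has norm $\|E+F\|^{1/2}$, and the block-diagonal contraction with blocks $\Delta$ and $i\Gamma$. With that repaired, your extension of the third inequality to arbitrary $h$ goes through.

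The genuine gap is in your second step, and you have correctly diagnosed it: the factor $C=2$ coming from $|\mathrm{Re}(fg)|+|\mathrm{Im}(fg)|\le2\|f(x)g(x)\|I_n$ cannot be removed, because the inequality with constant $n$ fails for non-self-adjoint products. Take $n=1$, $X$ a single point, $\nu=\nu_\rho=1$, $f\equiv1$, $g\equiv1+i$. Then $\|f\|_1=1$ and $\|g\|_\infty=\sqrt2$, while any decomposition $1+i=p_1-p_2+i(p_3-p_4)$ with $p_k\ge0$ forces $p_1+p_3\ge2$, so
\[
\|fg\|_1=2>\sqrt2=n\left\|\tfrac{d\nu}{d\nu_\rho}\right\|_\infty\left\|\tfrac{d\nu}{d\nu_\rho}^{-1}\right\|_\infty\|f\|_1\|g\|_\infty.
\]
So neither your proposed ``shared dominators'' nor any sharper matrix decomposition can recover the constant $n$; note that the paper's own proof has exactly the same defect, since it invokes the first inequality of Proposition \ref{prop:finitecomparable} for the non-self-adjoint function $fg$. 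What your argument does establish, completely once the first step is patched as above, is the statement with constant $2n$ in place of $n$, which is all that is needed for the boundedness of multiplication.
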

\begin{proof}
By Proposition \ref{prop:finitecomparable} we have that
\begin{align*}
\|fg\|_1 & \leq \left\| \int_X \|f(x)g(x)\|I_n d\nu \right\|
\\ & \leq \left\| \int_X \|f(x)\|\|g\|_\infty I_n d\nu \right\|
\\ & = \|g\|_\infty \left\|\int_X \|f(x)\|I_n d\nu\right\|
\\ & \leq \|g\|_\infty n \left\|\frac{d\nu}{d\nu_\rho}\right\|_\infty \left\| \frac{d\nu}{d\nu_\rho}^{-1}\right\|_\infty \|f\|_1 < \infty. 
\end{align*}
\end{proof}

However, if the boundedness condition is dropped, multipliers can become unbounded even in finite dimensions.

\begin{example}
Let $\mathcal H = \mathbb C^2$, $X=[0,1]$, $\mu$ be Lebesgue measure and
\[
\nu = \left[ \begin{matrix} \mu & 0 \\ 0 & \sum_{i=1}^\infty 2^{i/2}\chi_{(\frac{1}{2^i},\frac{1}{2^{i-1}}]} \mu \end{matrix} \right].
\]
Now, $\nu$ is a POVM because it is positive and finite
\[
\int_0^1 \sum_{i=1}^\infty 2^{i/2}\chi_{(\frac{1}{2^i},\frac{1}{2^{i-1}}]} d\mu
= \sum_{i=1}^\infty \frac{1}{{\sqrt 2}^i} = \frac{1}{1-\frac{1}{\sqrt 2}} = \frac{\sqrt 2}{\sqrt 2 - 1} < \infty.
\]
Let $f(x) = \sum_{i=1}^\infty 2^{i/2}\chi_{(\frac{1}{2^i},\frac{1}{2^{i-1}}]}(x) e_{1,1}$ which by the above calculation gives that $f\in L^1_\mathcal H([0,1], \nu)$.
However, for $U = \left[\begin{matrix} 0& 1 \\ 1 & 0 \end{matrix}\right] \in L^\infty_\mathcal H([0,1],\nu)$ one has
\begin{align*}
\|U^*fU\|_1 & = \left\| \sum_{i=1}^\infty 2^{i/2}\chi_{(\frac{1}{2^i},\frac{1}{2^{i-1}}]}(x) e_{2,2} \right\|_1
\\ & = \left| \int_0^1 \sum_{i=1}^\infty 2^i\chi_{(\frac{1}{2^i},\frac{1}{2^{i-1}}]}(x) d\mu \right| 
\\ & = \infty.
\end{align*}
Therefore, multiplication by $U$ is not bounded on $L^1_\mathcal H([0,1], \nu)$ even though $\mathcal H = \mathbb C^2$.
\end{example}




Even though the set of bounded multipliers may be difficult to characterize we can move forward using only those arising from $L^\infty(X,\nu_\rho)$.
To this end, define the following subspace of linear functionals on $L^1_\mathcal H(X,\nu)$
\[
\mathcal F(X,\nu) = \operatorname{span}\{\tr(s\langle \cdot, g I_\mathcal H\rangle) : s\in \mathcal S(\mathcal H), g\in L^\infty(X,\nu_\rho)\}.
\]

\begin{proposition}\label{prop:fsep}
The family $\{\langle \cdot, g I_\mathcal H\rangle : g\in L^\infty(X,\nu_\rho)\}$ is separating and  $\mathcal F(X,\nu)$ is a separating subspace of linear functionals on $L^1_\mathcal H(X,\nu)$.
\end{proposition}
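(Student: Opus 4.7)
The plan is to reduce the claim to the classical separation between $L^1$ and $L^\infty$ of the measure $\nu_\rho$, by passing through the scalar functions $f_s$. It suffices to exhibit, for each $f\in L^1_\mathcal H(X,\nu)$ with $\|f\|_1\neq 0$, a state $s\in\mathcal S(\mathcal H)$ and some $g\in L^\infty(X,\nu_\rho)$ such that $\tr(s\langle f,gI_\mathcal H\rangle)\neq 0$. From this single non-vanishing both separation claims follow at once: $\langle f,gI_\mathcal H\rangle\neq 0$ because states are separating on $\mathcal B(\mathcal H)$, and the functional $\tr(s\langle\cdot,gI_\mathcal H\rangle)\in\mathcal F(X,\nu)$ does not vanish at $f$.

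First, I invoke the observation already used inside the proof of Lemma \ref{lemma:equivalencetoclassical}: by the defining equation $\tr(s\int_X f\,d\nu)=\int_X f_s\,d\nu_\rho$ and the fact that states are separating on $\mathcal B(\mathcal H)$, the hypothesis $\|f\|_1\neq 0$ forces the existence of a state $s_0\in\mathcal S(\mathcal H)$ with $f_{s_0}\neq 0$ in $L^1(X,\nu_\rho)$.

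Next, because $g(x)I_\mathcal H$ is scalar it commutes with $(\tfrac{d\nu}{d\nu_\rho}(x))^{1/2}$ pointwise, which yields the key identity $(f\cdot gI_\mathcal H)_{s_0}(x)=g(x)f_{s_0}(x)$. The bounded-multiplier lemma preceding Lemma \ref{lemma:CauchySchwarz} shows that $f\cdot gI_\mathcal H\in L^1_\mathcal H(X,\nu)$, so the bracket is well defined and the definition of the POVM integral gives
\[
\tr\bigl(s_0\langle f,gI_\mathcal H\rangle\bigr)\;=\;\int_X (f\cdot gI_\mathcal H)_{s_0}\,d\nu_\rho\;=\;\int_X g\,f_{s_0}\,d\nu_\rho.
\]

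Finally, I choose $g=\overline{\operatorname{sgn}(f_{s_0})}$, namely $\overline{f_{s_0}}/|f_{s_0}|$ on $\{f_{s_0}\neq 0\}$ and $0$ elsewhere; this is measurable with $\|g\|_\infty\leq 1$, hence an element of $L^\infty(X,\nu_\rho)$. Then $gf_{s_0}=|f_{s_0}|$, so the integral above equals $\|f_{s_0}\|_{L^1(X,\nu_\rho)}>0$, which is the required non-vanishing and completes both separation claims. There is no substantive obstacle beyond the mildly delicate scalar-commutativity calculation $(fgI_\mathcal H)_{s_0}=gf_{s_0}$ and the routine measurability of $\operatorname{sgn}(f_{s_0})$; the real content is the reduction to classical $L^1$--$L^\infty$ duality through the state-evaluation map $f\mapsto f_{s_0}$.
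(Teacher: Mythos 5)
Your proof is correct and takes essentially the same route as the paper's: both first produce a state $s_0$ with $f_{s_0}\neq 0$ in $L^1(X,\nu_\rho)$ and then reduce to the classical duality between $L^1(X,\nu_\rho)$ and $L^\infty(X,\nu_\rho)$ via the identity $\tr\bigl(s_0\langle f,gI_\mathcal H\rangle\bigr)=\int_X g\,f_{s_0}\,d\nu_\rho$. The only (immaterial) difference is that you exhibit the witness $g=\overline{\operatorname{sgn}(f_{s_0})}$ explicitly, whereas the paper merely asserts that some such $g$ exists.
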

\begin{proof}
Suppose $f\in L^1_\mathcal H(X,\nu)$ such that $\|f\|_1 \neq 0$. 
There must exist $s\in \mathcal S(\mathcal H)$ such that $f_s \neq 0 \in L^1(X, \nu_\rho)$.
Otherwise, $\left(\frac{d\nu}{d\nu_\rho}(x)\right)^{1/2}f(x)\left(\frac{d\nu}{d\nu_\rho}(x)\right)^{1/2} = 0$ for almost all $x\in X$ with respect to $\nu$ (equally $\nu_\rho$) and so $f\equiv 0$ in $L^1_\mathcal H(X,\nu)$.

This implies that there exists a $g\in L^\infty(X,\nu_\rho)$ such that
\begin{align*}
0\neq \int_X f_sg d\nu_\rho & = \int_X \tr\left(s \left(\frac{d\nu}{d\nu_\rho}(x)\right)^{1/2}f(x)g(x)I_\mathcal H\left(\frac{d\nu}{d\nu_\rho}(x)\right)^{1/2} \right) d\nu_\rho
\\ &= \tr\left(s \int_X f(x) g(x)I_{\mathcal H} d\nu \right)
\\ &= \tr(s \langle f, gI_\mathcal H\rangle ).
\end{align*}
Therefore, the conclusion is reached.
\end{proof}

One needs to be quite careful here as it seems unlikely that this family recovers the 1-norm on $L^1_\mathcal H(X,\nu)$. However, we can show that it detects positivity.

\begin{lemma}\label{lemma:positivedetect}
Suppose $f\in L^1_\mathcal H(X,\nu)$, then $f\geq 0$ if and only if $\langle f,gI_\mathcal H\rangle \geq 0$ for all $g\in L^\infty(X,\nu_\rho)$ such that $g\geq 0$.
\end{lemma}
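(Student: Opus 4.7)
The forward implication is a straightforward positivity-of-integration computation. Given a positive representative $f'\ge 0$ of the class $f\in L^1_\mathcal H(X,\nu)$ and a non-negative $g\in L^\infty(X,\nu_\rho)$, the product $gf'$ is a positive quantum random variable, so for every state $s$,
\[
\tr\bigl(s\,\langle f,gI_\mathcal H\rangle\bigr)\;=\;\int_X g(x)\,\tr\!\Bigl(s\,\tfrac{d\nu}{d\nu_\rho}(x)^{1/2}f'(x)\,\tfrac{d\nu}{d\nu_\rho}(x)^{1/2}\Bigr)\,d\nu_\rho(x)\;\ge\;0,
\]
using positivity of $s$, $\tfrac{d\nu}{d\nu_\rho}$, $f'$ and $g$. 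Since states detect positivity in $\mathcal B(\mathcal H)$, this forces $\langle f,gI_\mathcal H\rangle\ge 0$.

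For the converse, fix any state $s\in\mathcal S(\mathcal H)$. The same identity, combined with the hypothesis, gives
\[
\int_X g\,f_s\,d\nu_\rho \;=\; \tr\bigl(s\,\langle f,gI_\mathcal H\rangle\bigr)\in[0,\infty)\qquad\text{for every }g\ge 0\text{ in }L^\infty(X,\nu_\rho).
\]
Running $g$ over indicator functions $\chi_E$, $E\in\mathcal O(X)$, forces $f_s\ge 0$ (in particular $f_s$ real-valued) $\nu_\rho$-almost everywhere. To cover all states simultaneously, use separability of $\mathcal H$ to obtain separability of $\mathcal S(\mathcal H)$ in the trace norm; picking a countable trace-norm-dense subset $\{s_n\}\subseteq\mathcal S(\mathcal H)$ and unioning the corresponding exceptional sets into a single $\nu_\rho$-null set $N$, for every $x\notin N$ and every $n$ we have $\tr(s_n A(x))\ge 0$, where
\[
A(x):=\tfrac{d\nu}{d\nu_\rho}(x)^{1/2}f(x)\,\tfrac{d\nu}{d\nu_\rho}(x)^{1/2}.
\]
For fixed $x$, the map $s\mapsto \tr(sA(x))$ is trace-norm continuous, so the inequality passes to all $s\in\mathcal S(\mathcal H)$, whence $A(x)\ge 0$ for $\nu_\rho$-a.e.\ $x$.

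The main obstacle is translating ``$A\ge 0$ a.e.'' back into ``$f\ge 0$ in $L^1_\mathcal H(X,\nu)$''. This is transparent in the classical case $\nu=\mu I_\mathcal H$, where $\tfrac{d\nu}{d\nu_\rho}=I_\mathcal H$ forces $A(x)=f(x)$, exhibiting a positive representative directly; more generally, if $\tfrac{d\nu}{d\nu_\rho}$ is almost-everywhere invertible, conjugating $A$ by the inverse produces such a representative. In the fully general case one expects to replace $f(x)$ by $P(x)f(x)P(x)$, where $P(x)$ is the spectral projection of $\tfrac{d\nu}{d\nu_\rho}(x)$ onto the closure of its range; a density argument on $\operatorname{range} P(x)$ shows $P(x)f(x)P(x)\ge 0$, and Lemma~\ref{lemma:equivalencetoclassical} together with the definition of $\mathcal I$ should identify this modification with $f$ in $L^1_\mathcal H(X,\nu)$. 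The measurability of $P(\cdot)$ and the norm-equivalence are the only non-routine points.
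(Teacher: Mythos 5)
Your overall route is the same as the paper's: both directions reduce to the scalar functions $f_s$, and positivity of $\langle f,gI_\mathcal H\rangle$ against all positive $g$ is converted, via indicator functions, into $f_s\ge 0$ almost everywhere for every state $s$. The paper's own proof is a terse chain of equivalences, $f\ge 0\Leftrightarrow f_s\ge 0$ for all $s$ $\Leftrightarrow f_sg\ge 0 \Leftrightarrow \langle f,gI_\mathcal H\rangle\ge 0$, resting on the observation in the proof of Proposition \ref{prop:fsep} that an element of $L^1_\mathcal H(X,\nu)$ vanishes exactly when $\tfrac{d\nu}{d\nu_\rho}^{1/2}f\tfrac{d\nu}{d\nu_\rho}^{1/2}=0$ a.e.; in effect the paper takes ``$f\ge 0$'' to mean ``$f_s\ge 0$ for every state $s$'', which is exactly ``$A(x)\ge 0$ a.e.''\ in your notation. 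Your handling of the measure-theoretic point the paper glosses over --- obtaining a single null set that works for all states via trace-norm separability of $\mathcal S(\mathcal H)$ and continuity of $s\mapsto\tr(sA(x))$ --- is correct and is a genuine improvement in rigor.

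The one real problem is your last paragraph. If ``$f\ge 0$'' is read as ``the class of $f$ contains a pointwise positive representative'' (the reading suggested by the construction of $\mathcal L^1_\mathcal H(X,\nu)$ as a span of positive quantum random variables), then you owe the argument you only sketch, and ``one expects'' / ``should identify'' is not a proof. The sketch does close: from $A(x)\ge 0$ one gets $\langle f(x)\eta,\eta\rangle\ge 0$ for $\eta$ in the range of $\tfrac{d\nu}{d\nu_\rho}(x)^{1/2}$, hence on its closure since $f(x)$ is a bounded operator, so $P(x)f(x)P(x)\ge 0$ where $P(x)$ is the range projection; $P$ is measurable as a strong limit of $\bigl(\tfrac{1}{n}I_\mathcal H+\tfrac{d\nu}{d\nu_\rho}\bigr)^{-1}\tfrac{d\nu}{d\nu_\rho}$; and $\tfrac{d\nu}{d\nu_\rho}^{1/2}(f-PfP)\tfrac{d\nu}{d\nu_\rho}^{1/2}=0$ a.e.\ because $\tfrac{d\nu}{d\nu_\rho}^{1/2}P=\tfrac{d\nu}{d\nu_\rho}^{1/2}$, so $f-PfP$ lies in the kernel $\mathcal I$ by the characterization used in Proposition \ref{prop:fsep} (and $(PfP)_s=f_s$, so $PfP$ is $\nu$-integrable). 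Alternatively, adopt the paper's convention that positivity in the quotient means $f_s\ge 0$ for all states $s$, in which case your proof is already finished once you reach $A\ge 0$ a.e. Either way, state which definition of positivity you are using and complete the step rather than leaving it conditional.
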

\begin{proof}
By the proof of the previous proposition it is easy to see that \begin{align*} f\geq 0 &\quad \Leftrightarrow \quad f_s \geq 0, & \forall s\in \mathcal S(\mathcal H) 
\\ &\quad \Leftrightarrow \quad f_sg\geq 0, & \forall s\in \mathcal S(\mathcal H), g\geq 0 \in L^\infty(X,\nu_\rho)
\\ &\quad \Leftrightarrow \quad gf \geq 0, & \forall g\geq 0 \in L^\infty(X,\nu_\rho)
\\ &\quad \Leftrightarrow \quad \langle f,gI_\mathcal H\rangle \geq 0, & \forall g\geq 0 \in L^\infty(X,\nu_\rho).
\end{align*}
\end{proof}

We say a sequence $\{f_i\}_{i\geq 1}$ in $L^1_\mathcal H(X,\nu)$ is \textit{weakly converging} to $f\in L^1_\mathcal H(X,\nu)$ if it is converging weakly with respect to the family $\mathcal F(X,\nu)$. This is the same as 
\[
\langle f_i, gI_\mathcal H\rangle \rightarrow \langle f,gI_\mathcal H\rangle, \quad \forall g\in L^\infty(X,\nu_\rho)
\]
with convergence in the ultraweak topology of $\mathcal B(\mathcal H)$.

\section{Bistochastic operators}\label{sec:Bistoch}

Throughout this section, as before, $\nu\in \povm{\mathcal H}{X}$. The following definition echoes that of the classical bistochastic operator.

\begin{definition}
A linear operator $B$ is called a {\em bistochastic operator} on $L^1_\mathcal H(X, \nu)$ if 
\begin{enumerate}
    \item $B$ is positive,
    \item $\int_X Bf d\nu = \int_X f d\nu , \quad \forall f\in L^1_\mathcal H(X,\nu)$,
    \item $BI_\mathcal H = I_\mathcal H$,
\end{enumerate}
where $I_\mathcal H$ above refers to the constant function $I_\mathcal H$ in $L^1_\mathcal H(X,\nu)$.
The set of all bistochastic operators on $L^1_\mathcal H(X, \nu)$ is denoted by $\mathfrak{B}(X,\nu)$.
\end{definition}

\begin{lemma}\label{lem:bistochasticselfadjoint}
Every bistochastic operator $B$ is self-adjoint, meaning that for every $f\in L^1_\mathcal H(X,\nu)$ we have $B(f^*) = B(f)^*$.
\end{lemma}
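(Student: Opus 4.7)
The plan is to exploit the positivity of $B$ together with the canonical four-part decomposition of elements of $\mathcal L^1_\mathcal H(X,\nu)$ that was used throughout Section~3.

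Given $f\in L^1_\mathcal H(X,\nu)$, I would first pick any representative decomposition $f = f_1 - f_2 + i(f_3 - f_4)$ with $f_k\in \mathcal L^1_\mathcal H(X,\nu)$, $f_k\geq 0$ (which exists by definition of $\mathcal L^1_\mathcal H(X,\nu)$ as a span of positive $\nu$-integrable quantum random variables). Pointwise (a.e.) adjointing then gives $f^* = f_1 - f_2 - i(f_3 - f_4)$, which is again a valid decomposition of the same type, and as already noted in the proof that $\|f^*\|_1 = \|f\|_1$ this descends well to the quotient $L^1_\mathcal H(X,\nu)$, so $f\mapsto f^*$ is a well-defined conjugate-linear involution.

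Next I would apply $B$ linearly to both expressions:
\[
B(f) = B(f_1) - B(f_2) + i\bigl(B(f_3) - B(f_4)\bigr), \qquad B(f^*) = B(f_1) - B(f_2) - i\bigl(B(f_3) - B(f_4)\bigr).
\]
Since $B$ is positive, each $B(f_k)\geq 0$ in $L^1_\mathcal H(X,\nu)$; any representative of $B(f_k)$ takes positive (hence self-adjoint) values in $\mathcal B(\mathcal H)$ almost everywhere, and therefore $B(f_k)^* = B(f_k)$ as an element of $L^1_\mathcal H(X,\nu)$. Taking the adjoint of the expression for $B(f)$ and using conjugate-linearity of $*$ together with self-adjointness of each $B(f_k)$ immediately yields $B(f)^* = B(f^*)$.

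The only subtle point, and where I would spend any care at all, is confirming that positivity in the quotient space $L^1_\mathcal H(X,\nu)$ really implies pointwise (a.e.)~self-adjointness of some representative — but this is built into the definition, since $\mathcal L^1_\mathcal H(X,\nu)$ is generated by genuine positive-operator-valued functions and the seminorm quotient only identifies elements differing by something of zero $1$-norm (which by Lemma~\ref{lemma:statetopologyweaker} forces $f_s = 0$ for every $s\in\mathcal S(\mathcal H)$). So there is no real obstacle; once the decomposition is in place, the $*$-preservation is a one-line consequence of linearity of $B$ and the self-adjointness of positive elements.
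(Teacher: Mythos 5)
Your proposal is correct and follows essentially the same route as the paper: decompose $f$ into four positive parts, apply $B$ linearly, and use positivity of $B$ to conclude each $B(f_k)$ is self-adjoint so that taking adjoints commutes with $B$. The extra care you take about well-definedness of $f\mapsto f^*$ on the quotient is a reasonable addition but not something the paper dwells on.
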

\begin{proof}
Let $f\in L^1_\mathcal H(X,\nu)$. There exist $f_i\in L^1_\mathcal H(X,\nu), f_i\geq 0$, $i=1, \dots, 4$, such that $f = f_1 - f_2 + i(f_3 - f_4)$. Hence,
\begin{align*}
B(f^*) & = B(f_1 - f_2 - i(f_3 - f_4))
\\ & = B(f_1) - B(f_2) - i(B(f_3) - B(f_4))
\\ & = (B(f_1) - B(f_2) + i(B(f_3) - B(f_4)))^*
\\ & = (B(f))^*.
\end{align*}
\end{proof}

\begin{lemma}\label{lem:bistoinfinitycontractive}
Every bistochastic operator takes $L^\infty_\mathcal H(X,\nu)$ to itself and is bounded in the $\infty$-norm. Furthermore, it is contractive on all self-adjoint functions.
\end{lemma}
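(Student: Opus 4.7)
The plan is to reduce the claim to the self-adjoint case, where the bound is a direct consequence of positivity combined with the normalization $B(I_\mathcal H)=I_\mathcal H$, and then use the real/imaginary-part decomposition (together with the already-established Lemma \ref{lem:bistochasticselfadjoint}) to handle general $g \in L^\infty_\mathcal H(X,\nu)$.

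First I would fix a self-adjoint $f\in L^\infty_\mathcal H(X,\nu)$ and observe that, $\nu$-almost everywhere,
\[
-\|f\|_\infty I_\mathcal H \ \leq \ f(x) \ \leq \ \|f\|_\infty I_\mathcal H ,
\]
so that $\|f\|_\infty I_\mathcal H \pm f$ are positive elements of $L^1_\mathcal H(X,\nu)$ (noting that $I_\mathcal H$ lies in the domain of $B$ by the definition of bistochastic, and hence so does any scalar multiple). Applying the linearity and positivity of $B$, together with $B(I_\mathcal H)=I_\mathcal H$, gives
\[
\|f\|_\infty I_\mathcal H \pm B(f) \ = \ B\bigl(\|f\|_\infty I_\mathcal H \pm f\bigr) \ \geq \ 0.
\]
Since $B(f)$ is self-adjoint by Lemma \ref{lem:bistochasticselfadjoint}, these two inequalities combine to yield $-\|f\|_\infty I_\mathcal H \leq B(f)(x) \leq \|f\|_\infty I_\mathcal H$ $\nu$-a.e., which simultaneously shows $B(f)\in L^\infty_\mathcal H(X,\nu)$ and the contractivity estimate $\|B(f)\|_\infty \leq \|f\|_\infty$ promised in the second sentence of the lemma.

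For a general $g\in L^\infty_\mathcal H(X,\nu)$, I would decompose
\[
g \ = \ g_1 + i g_2, \qquad g_1 := \tfrac{1}{2}(g + g^*), \quad g_2 := \tfrac{1}{2i}(g - g^*),
\]
noting that $g_1,g_2$ are self-adjoint with $\|g_j\|_\infty \leq \|g\|_\infty$. By Lemma \ref{lem:bistochasticselfadjoint}, the operator $B$ respects the real and imaginary parts, so $B(g) = B(g_1) + i B(g_2)$ with each summand self-adjoint. Applying the self-adjoint bound from the previous paragraph to $g_1$ and $g_2$ separately yields $B(g)\in L^\infty_\mathcal H(X,\nu)$ with $\|B(g)\|_\infty \leq 2\|g\|_\infty$, which establishes the first assertion and the boundedness claim.

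The only subtlety, rather than a true obstacle, is making sure that the positivity inequality $B\bigl(\|f\|_\infty I_\mathcal H - f\bigr) \geq 0$ really translates into a pointwise a.e.\ operator inequality for $B(f)$, as opposed to some weaker spectral or integral statement; this is fine because positivity in $L^1_\mathcal H(X,\nu)$ has been defined precisely as being a.e.\ a positive quantum random variable. Once this is noted, the rest of the argument is entirely formal.
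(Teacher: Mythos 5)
Your proof is correct and follows essentially the same route as the paper: the self-adjoint case via $\|f\|_\infty I_\mathcal H \pm f \geq 0$, positivity, and $B(I_\mathcal H)=I_\mathcal H$, followed by the real/imaginary decomposition for general $g$. The only difference is that you spell out the final reduction (which the paper compresses into ``by the linearity of $B$, the result holds''), correctly obtaining the bound $\|B(g)\|_\infty \leq 2\|g\|_\infty$ in the non-self-adjoint case.
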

\begin{proof}
Let $B\in \mathfrak{B}(X,\nu)$ and let $f\in L^\infty_\mathcal H(X, \nu)$ be self-adjoint. Then $\|f\|_\infty I_\mathcal H \pm f \geq 0$ almost everywhere. If $B \in \mathfrak B(X,\nu)$ then 
\[
\|f\|_\infty I_\mathcal H \pm Bf = B(\|f\|_\infty I_\mathcal H \pm f) \geq 0.
\]
Therefore, $Bf$ is essentially bounded by $\|f\|_\infty$. By the linearity of $B$, the result holds.
\end{proof}

\begin{proposition}
Every bistochastic operator is contractive with respect to the $\|\cdot\|_1$-norm.
\end{proposition}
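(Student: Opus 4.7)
The plan is to use the definition of the $1$-seminorm directly, exploiting the two essential features of a bistochastic operator $B$: positivity (condition 1) and integral preservation (condition 2). The third condition $BI_\mathcal H = I_\mathcal H$ does not actually enter the proof of contractivity.

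First I would take an arbitrary $f \in L^1_\mathcal H(X,\nu)$ and an arbitrary admissible decomposition $f = f_1 - f_2 + i(f_3 - f_4)$ with each $f_k \in \mathcal L^1_\mathcal H(X,\nu)$ positive. By linearity of $B$, one obtains a decomposition
\[
Bf = Bf_1 - Bf_2 + i(Bf_3 - Bf_4),
\]
and by positivity of $B$ each $Bf_k$ is again a positive element of $L^1_\mathcal H(X,\nu)$. This means $\{Bf_k\}$ is itself an admissible decomposition of $Bf$ eligible to compete in the infimum defining $\|Bf\|_1$.

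Next I would apply the defining inequality
\[
\|Bf\|_1 \leq \left\| \int_X \sum_{k=1}^4 Bf_k \, d\nu \right\|
\]
and invoke condition (2) of the bistochastic definition applied to each $f_k$ (or to their sum, by linearity of the integral against $\nu$) to replace $\int_X Bf_k \, d\nu$ by $\int_X f_k \, d\nu$. This yields
\[
\|Bf\|_1 \leq \left\| \int_X \sum_{k=1}^4 f_k \, d\nu \right\|.
\]
Taking the infimum on the right-hand side over all admissible decompositions of $f$ gives $\|Bf\|_1 \leq \|f\|_1$, as required.

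The proof is essentially immediate once one notices that the two defining features of a bistochastic operator are perfectly tuned to the structure of the $1$-seminorm: positivity transports positive decompositions to positive decompositions, and condition (2) ensures that the quantity $\|\int_X \sum f_k\, d\nu\|$ that the seminorm minimizes is invariant under $B$. There is no real obstacle here; in particular no appeal to self-adjointness of $B$ (Lemma \ref{lem:bistochasticselfadjoint}), boundedness in the $\infty$-norm (Lemma \ref{lem:bistoinfinitycontractive}), or any properties of the Radon-Nikod\'ym derivative is needed.
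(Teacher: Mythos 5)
Your proof is correct and follows essentially the same route as the paper's: apply $B$ to an arbitrary positive decomposition of $f$, use positivity to see that the images form an admissible decomposition of $Bf$, invoke condition (2) to replace $\int_X \sum_k Bf_k\, d\nu$ by $\int_X \sum_k f_k\, d\nu$, and take the infimum. Your added observation that condition (3) plays no role here is accurate.
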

\begin{proof}
Let $f\in L^1_\mathcal H(X,\nu)$ and $B\in \mathfrak B(X,\nu)$. For every $f_k\in L^1_\mathcal H(X,\nu), f_k\geq 0, 1\leq k\leq 4$ such that $f = f_1 -f_2 + i(f_3 - f_4)$ we have that
\begin{align*}
    \|Bf\|_1 & = \|Bf_1 - Bf_2 + i(Bf_3 - Bf_4)\|_1
    \\ & \leq \left\| \int_X \sum_{k=1}^4 Bf_k \ d\nu \right\|
    \\ & = \left\| \int_X \sum_{k=1}^4 f_k \ d\nu \right\|.
\end{align*}
Taking the infimum over all such combinations implies the conclusion.
\end{proof}

The easiest class of bistochastic operators to study are those which arise from classical bistochastic theory. 
Consider $\nu = \mu I_\mathcal H$ for some finite, positive measure $\mu$, which gives that $\frac{d\nu}{d\nu_\rho}(x) = I_\mathcal H$ and $\nu_\rho = \mu$. For this, we use $L^1(X,\mu)$ but with a compatible 1-norm $\int_X |f|_1 d\mu$ where 
\[
|f(x)|_1 = |Re\: f(x)| + |Im\: f(x)|.
\]
This implies that $L^1(X,\mu) I_\mathcal H \subset L^1(X,\nu)$.
Most classical sources seem to only consider real-valued functions so this is no different than the usual norm. Regardless, this new norm and the usual norm on $L^1$ are comparable since $|f(x)| \leq |f(x)|_1 \leq 2|f(x)|$.
Thus, with this choice of norm we have that $L^1(X,\mu)$ is norm closed.

The set of bistochastic operators on the classical $L^1(X,\mu)$ is denoted $\mathfrak B(L^1(X,\mu))$.

\begin{theorem}
If $\nu = \mu I_\mathcal H$ for some finite, positive measure $\mu$,
then every $B\in \mathfrak B(L^1(X,\mu))$ extends to a bistochastic operator in $\mathfrak B(X,\nu)$ by the formula
\[
B\left(fA\right) = B(f)A, \quad \forall f\in L^1(X,\mu), \ A\in \mathcal B(\mathcal H).
\]
\end{theorem}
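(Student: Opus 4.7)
The plan is to build $\tilde B$ entrywise with respect to a fixed orthonormal basis $\{e_i\}$ of $\mathcal H$. For $g \in L^1_\mathcal H(X,\nu)$ with $\nu = \mu I_\mathcal H$, the scalar entries $g_{ij}(x) = \langle g(x) e_j, e_i\rangle$ lie in $L^1(X,\mu)$: testing $\nu$-integrability against the pure states $|v\rangle\langle v|$ and using polarization suffices, since here $\nu_\rho = \mu$ and $\frac{d\nu}{d\nu_\rho} = I_\mathcal H$. The desired formula forces $\tilde B(fA)_{ij} = B(fa_{ij}) = B(f)a_{ij} = (B(f)A)_{ij}$ by the scalar linearity of $B$, so I will define $\tilde B(g)$ through the matrix entries $\tilde B(g)_{ij} := B(g_{ij})$, with $B$ extended to complex $L^1$ in the natural way.

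To show the entrywise data actually assembles into an element of $L^1_\mathcal H(X,\nu)$, I will first handle $g \geq 0$. Positivity of the sesquilinear form $[B(g_{ij})(x)]$ for a.e.\ $x$ is immediate: for every finitely supported rational vector $v = \sum_i \alpha_i e_i$, the scalar function $\langle g(\cdot) v, v\rangle = \sum_{i,j} \bar\alpha_i \alpha_j g_{ij}(\cdot)$ is a nonnegative element of $L^1(X,\mu)$, so by classical positivity of $B$, $\sum_{i,j} \bar\alpha_i \alpha_j B(g_{ij})(x) \geq 0$ a.e. Intersecting countably many conull sets over a dense collection of $v$'s yields a single conull set on which the form is positive semidefinite on a dense subspace of $\mathcal H$. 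Boundedness of this form for a.e.\ $x$—the technically delicate point—I will extract from the polarization identity combined with $L^\infty$-contractivity and positivity of the classical $B$, producing a bounded positive operator $\tilde B(g)(x) \in \mathcal B(\mathcal H)_+$. I then extend by linearity through the standard positive/negative, real/imaginary decomposition, with well-definedness across representations ensured by additivity of the construction on the positive cone.

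The three bistochastic axioms then follow at once. Positivity holds by construction. The integral identity reduces to the entrywise computation
\[
\Bigl(\int_X \tilde B(g)\,d\nu\Bigr)_{ij} = \int_X B(g_{ij})\,d\mu = \int_X g_{ij}\,d\mu = \Bigl(\int_X g\,d\nu\Bigr)_{ij},
\]
and the constant $I_\mathcal H = 1 \cdot I_\mathcal H$ satisfies $\tilde B(I_\mathcal H) = B(1)I_\mathcal H = I_\mathcal H$ since a classical bistochastic operator fixes $1$. Passage to the quotient $L^1_\mathcal H(X,\nu) = \mathcal L^1_\mathcal H(X,\nu)/\mathcal I$ follows from Lemma~\ref{lemma:statetopologyweaker}: if $\|g\|_1 = 0$ then $g_s = 0$ a.e.\ for every state $s$, hence $B(g_s) = 0$ a.e., forcing $\tilde B(g) \in \mathcal I$.

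The hard part will be the pointwise boundedness verification. Proposition~\ref{prop:denseinstatetopology} combined with Example~\ref{example:Linftynotnormdense} show that $L^\infty_\mathcal H(X,\nu)$ is \emph{not} $1$-norm dense in $L^1_\mathcal H(X,\nu)$ in infinite dimensions, so the tempting truncation $g_n = g\chi_{\{\|g(\cdot)\|\leq n\}}$ does not converge in the $1$-norm and cannot be used to reduce to the essentially bounded case. Instead I must argue directly, for unbounded $g \geq 0$, that $\sup_{\|v\|=1} B(\langle g(\cdot) v, v\rangle)(x) < \infty$ for a.e.\ $x$, using separability of $\mathcal H$ and the Cauchy-Schwarz-type bounds $|g_{ij}|^2 \leq g_{ii}g_{jj}$ on the entries of $g$ combined with the positivity and linearity of $B$.
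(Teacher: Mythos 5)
Your construction coincides with the paper's: the authors also define the extension entrywise, $B([f_{i,j}])=[B(f_{i,j})]$, obtain $B(I_\mathcal H)=I_\mathcal H$ and the integral identity exactly as you do, get positivity by viewing the action as ``akin to $B\otimes\operatorname{id}$,'' and establish membership in $L^1_\mathcal H(X,\nu)$ via the identity $(Bf)_s=B(f_s)$ and the computation $\|B(f)\|_1=\|f\|_1$ for $f\geq0$. Your handling of positivity (countable dense set of vectors, intersection of conull sets) and of the passage to the quotient is, if anything, more careful than the paper's.

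The gap is precisely the step you flag and postpone: showing that for positive $g\in L^1_\mathcal H(X,\nu)$ the matrix $[B(g_{i,j})(x)]$ is a \emph{bounded} operator for a.e.\ $x$ when $\dim\mathcal H=\infty$. Your proposal never carries this out, and the tools you name cannot: the bound $|g_{i,j}|^2\leq g_{i,i}g_{j,j}$ together with positivity of $B$ reduces the question to controlling $\sup_i B(g_{i,i})(x)$, and that supremum can be infinite on a set of full measure. Concretely, take $X=[0,1]^2$ with Lebesgue measure, let $B$ be the conditional expectation onto the first coordinate (a bistochastic operator), and let $g=\sum_{n\geq1} n^2\,\chi_{A_n\times F_n}\,e_{n,n}$ where $|A_n|=|F_n|=1/n$ and the events $A_n$ are independent. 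Since $\sum_n |A_n\times F_n|<\infty$, Borel--Cantelli shows $g(x,y)$ is a bounded positive diagonal operator a.e., and $\int_X g\,d\nu=I_\mathcal H$, so $g$ is a $\nu$-integrable positive quantum random variable with $\|g\|_1=1$. Yet $B(g_{n,n})(x,y)=n\chi_{A_n}(x)$, and since $\sum_n|A_n|=\infty$ with independence, a.e.\ $x$ lies in infinitely many $A_n$; the entrywise image is therefore an unbounded diagonal for a.e.\ $(x,y)$ and is not a quantum random variable, nor is it equivalent to one. So the pointwise-boundedness step is not merely delicate --- it fails for general $B$ and $g$, and no argument along the lines you sketch can close it. (The paper's own proof is silent on this point: the computation $\|B(f)\|_1=\|f\|_1$ via $(Bf)_s=B(f_s)$ only controls each scalar function $(Bf)_s$ on an $s$-dependent conull set and does not produce an operator-valued representative.) Any repair must either restrict the class of bistochastic operators or of functions, or reinterpret $B(f)$ in a weak sense; your proposal does neither.
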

\begin{proof}
Since $\mathcal H$ is separable then we can view every $f\in L^1_\mathcal H(X,\mu I_\mathcal H)$ as 
\[
f(x) = [f_{i,j}(x)] \in \mathcal B(\mathcal H)
\]
where $f_{i,j} \in L^1(X,\mu), i,j\geq 1$.
By hypothesis define $B$ on $L^1_\mathcal H(X,\mu I_\mathcal H)$ by
\[
B(f) = [B(f_{i,j})].
\]
Linearity is automatic, $B(I) = I$, and positivity follows from the fact that the action of $B$ on $f$  is akin to $B\otimes id$ and $B$ is positive. 

Now for every $f\in L^1_\mathcal H(X,\mu I_\mathcal H), f\geq 0$ we have that
\begin{align*}
\|B(f)\|_1  & = \sup_{s\in\mathcal S(\mathcal H)} \tr\left(s\int_X B(f) d\mu I_\mathcal H\right)
\\ & = \sup_{s\in\mathcal S(\mathcal H)} \int_X \tr(sB(f)) d\mu
\\ & = \sup_{s\in\mathcal S(\mathcal H)} \int_X B(\tr(sf)) d\mu
\\ & = \sup_{s\in\mathcal S(\mathcal H)} \int_X \tr(sf) d\mu
\\ & = \sup_{s\in\mathcal S(\mathcal H)} \tr\left(s\int_X f d\mu I_\mathcal H\right)
\\ & = \|f\|_1.
\end{align*}
Therefore, by the triangle inequality, $B(f) \in L^1_\mathcal H(X,\mu I_\mathcal H)$. An important step used above that will be used again in Section 6, which in fact  comes automatically with the definition of $B$, is $(Bf)_s = B(f_s)$. 
\end{proof}

We will refer to the extension developed in the previous theorem by $B$ as well and the set of such bistochastic operators as $\mathfrak B(L^1(X,\mu))$ still. We have no example of a bistochastic operator on $L^1_\mathcal H(X,\mu I_\mathcal H)$ that does not arise in this way.

\begin{corollary}\label{cor:bracketmodularity}
If $\nu = \mu I_\mathcal H$ for some finite, positive measure $\mu$,
then for every $B\in \mathfrak B(L^1(X,\mu))$, $f\in L^1(X,\mu), A\in \mathcal B(\mathcal H)$ and $g\in L^\infty(X,\mu)$
\[
\langle B(fA), g I_\mathcal H\rangle = \langle B(f),g\rangle A.
\]
\end{corollary}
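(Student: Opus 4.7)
The plan is essentially to chase definitions and use the extension formula $B(fA) = B(f)A$ from the preceding theorem. First I would expand the left-hand side using the definition of the bracket:
\[
\langle B(fA), gI_\mathcal H\rangle = \int_X B(fA)(x)\, g(x) I_\mathcal H \, d\nu(x),
\]
and then substitute $B(fA) = B(f)A$ (where $B(f) \in L^1(X,\mu)$ is scalar-valued). This gives an integrand of the form $B(f)(x)\, A\, g(x) I_\mathcal H$, and since $g(x)I_\mathcal H$ commutes with the constant operator $A$, I can pointwise rewrite the integrand as $B(f)(x) g(x)\, A$.

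Next, since $\nu = \mu I_\mathcal H$, integration is defined entrywise against $\mu$. When the integrand is a scalar function $k(x) := B(f)(x)g(x) \in L^1(X,\mu)$ multiplied by a constant matrix $A$, each matrix entry is $k(x) A_{i,j}$, so
\[
\int_X k(x)\, A\, d\nu(x) = \left[\int_X k(x) A_{i,j}\, d\mu(x)\right] = \left(\int_X k(x)\, d\mu(x)\right) A.
\]
Identifying $\int_X B(f)(x) g(x)\, d\mu(x)$ with the (classical) bracket $\langle B(f), g\rangle$ on $L^1(X,\mu)\times L^\infty(X,\mu)$ yields the right-hand side $\langle B(f), g\rangle A$.

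There is essentially no substantive obstacle: the only subtlety to verify is that $fA \in L^1_\mathcal H(X,\mu I_\mathcal H)$ so that both sides make sense (which follows from the bounded-multiplier lemma for scalar multipliers, noting that $fA$ can be written as a linear combination of positive quantum random variables of the form $(Re\, f)_\pm A$, $(Im\, f)_\pm A$ with $A$ decomposed into its four self-adjoint/positive parts), and that the integrability of $B(f)g$ against $\mu$ is guaranteed by $B(f) \in L^1(X,\mu)$ and $g \in L^\infty(X,\mu)$. The computation is then a three-line unwinding, and no further machinery beyond the preceding theorem and the entrywise definition of the $\mu I_\mathcal H$-integral is required.
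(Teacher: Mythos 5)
Your proof is correct and follows essentially the same route as the paper's: apply the extension formula $B(fA)=B(f)A$, commute the constant operator $A$ past the scalar multiplier $gI_\mathcal H$, and pull $A$ out of the entrywise integral to recover the classical bracket. The extra sanity check that $fA\in L^1_\mathcal H(X,\mu I_\mathcal H)$ is a reasonable addition but not something the paper spells out.
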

\begin{proof}
This is a straightforward calculation:
\begin{align*}
\langle B(fA),g I_\mathcal H\rangle & = \langle B(f)A, gI_\mathcal H\rangle
\\ & = \int_X B(f)AgI_\mathcal H \: d\mu I_\mathcal H
\\ & = \left(\int_X B(f)g d\mu\right)A
\\ & = \langle B(f),g\rangle A.
\end{align*}
\end{proof}

Lastly, we turn to topology again. Suppose $B_i, B\in \mathfrak B(X,\nu), i\geq 1$. We say that $B_i$ is \textit{WOT-convergent} to $B$ if $B_if$ weakly converges to $Bf$ for all $f\in L^1_\mathcal H(X,\nu)$, that is
\[
\langle B_i(f), gI_\mathcal H\rangle \rightarrow \langle B(f), gI_\mathcal H\rangle, \quad \forall f\in L^1_\mathcal H(X,\nu), g\in L^\infty(X,\nu_\rho)
\]
in the ultraweak topology on $\mathcal B(\mathcal H)$.

\section{Majorization of quantum random variables}\label{sec:MD}

Recall that if $f\in L^1_\mathcal H(X,\mu I)$ and $s\in \mathcal T(\mathcal H)$ then we define $f_s\in L^1(X,\mu)$ by
\[
f_s(x) = \tr(sf(x)) \in L^1(X,\mu).
\]
We now introduce several possible majorization partial orders which relate to multivariate majorization \cite[Chapter 15]{marshallolkin} and \cite{JoeVerducci}. 

\begin{definition}\label{def:multi}
Suppose $f,g\in L^1_\mathcal H(X,\mu I)$ and are self-adjoint where $\mu$ is a finite, positive, complex measure. We say that
\begin{enumerate}
\item $f\prec g$ if there exists a bistochastic operator $B\in \mathfrak B(L^1(X,\mu))$ such that $Bg=f$,
\item $f\prec_T g$ if $f_t \prec g_t$ for all $t\in \mathcal T(\mathcal H)_{sa}$, and 
\item $f\prec_S g$ if $f_s \prec g_s$ for all $s\in \mathcal S(\mathcal H)$.
\end{enumerate}
\end{definition}


\begin{proposition}
For $f,g\in L^1_\mathcal H([0,1],\mu I)$ self-adjoint we have that
\[
f \prec g \ \ \Rightarrow \ \ f\prec_T g \ \ \Rightarrow \ \ f\prec_S g.
\]
\end{proposition}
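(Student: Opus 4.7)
The plan is to establish the second implication by observing that $\mathcal{S}(\mathcal{H}) \subset \mathcal{T}(\mathcal{H})_{sa}$, and the first implication by using the identity $(Bh)_t = B(h_t)$ highlighted in the proof of the extension theorem for bistochastic operators, combined with the classical equivalence in Theorem~\ref{thm:continuousmajorization}.

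For the implication $f \prec_T g \Rightarrow f \prec_S g$, every state $s \in \mathcal{S}(\mathcal{H})$ is a positive (hence self-adjoint) trace-class operator, so $\mathcal{S}(\mathcal{H}) \subseteq \mathcal{T}(\mathcal{H})_{sa}$. Thus $f_s \prec g_s$ holds for every $s \in \mathcal{S}(\mathcal{H})$ as a special case of $f_t \prec g_t$ for all $t \in \mathcal{T}(\mathcal{H})_{sa}$.

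For the implication $f \prec g \Rightarrow f \prec_T g$, suppose $B \in \mathfrak B(L^1([0,1],\mu))$ satisfies $Bg = f$. Fix $t \in \mathcal{T}(\mathcal{H})_{sa}$. Since $t$ is self-adjoint and trace-class, we may write $t = t_+ - t_-$ with $t_\pm \geq 0$ trace-class, which we further normalize to $t = \alpha_1 s_1 - \alpha_2 s_2$ with $\alpha_i \geq 0$ and $s_i \in \mathcal{S}(\mathcal{H})$ (or $\alpha_i = 0$ if the corresponding part vanishes). By linearity of the trace and of $B$, together with the identity $(Bh)_s = B(h_s)$ for $s \in \mathcal{S}(\mathcal{H})$ noted at the end of the proof of the extension theorem, we obtain
\begin{align*}
(Bg)_t &= \alpha_1 (Bg)_{s_1} - \alpha_2 (Bg)_{s_2} = \alpha_1 B(g_{s_1}) - \alpha_2 B(g_{s_2}) = B(g_t).
\end{align*}
Since $g$ and $t$ are self-adjoint, $g_t$ is a real-valued function in $L^1([0,1],\mu)$, and since $B$ is a classical bistochastic operator on $L^1([0,1],\mu)$, Theorem~\ref{thm:continuousmajorization} yields $B(g_t) \prec g_t$. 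Combined with $f_t = (Bg)_t = B(g_t)$, this gives $f_t \prec g_t$.

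There is no real obstacle in this argument; the only subtlety is verifying that the identity $(Bh)_s = B(h_s)$, established in the extension theorem for states, propagates to arbitrary self-adjoint trace-class $t$ via the Jordan-type decomposition $t = t_+ - t_-$. Both implications reduce to classical facts once the correct pairing between the operator-level bistochastic action and the scalar-level bistochastic action is identified.
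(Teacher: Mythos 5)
Your proof is correct and follows essentially the same route as the paper: the second implication is immediate from $\mathcal S(\mathcal H)\subseteq\mathcal T(\mathcal H)_{sa}$, and the first rests on the intertwining $(Bg)_t=B(g_t)$ together with the classical fact that the image of a function under a bistochastic operator is majorized by that function (the paper cites Ryff directly where you invoke Theorem~\ref{thm:continuousmajorization}, which contains the same statement). Your extra step deriving the intertwining for general self-adjoint trace-class $t$ from the state case via the Jordan decomposition $t=\alpha_1 s_1-\alpha_2 s_2$ is a detail the paper simply asserts.
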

\begin{proof}
The second implication is automatic. For the first implication assume that there is a bistochastic operator $B\in \mathfrak B(L^1_{\mathcal H})$ such that $Bg=f$.
By the definition of $B\in\mathfrak B(L^1_{\mu I_\mathcal H})$, for every $t\in \mathcal T(\mathcal H)_{sa}$ we have that $(Bf)_t = B(f_t)$. Therefore, by \cite{Ryff63} $g_t = B(f_t) \prec f_t$ for all self-adjoint $t$ and the conclusion follows.
\end{proof}

If $\mathcal H = \mathbb C$ then the converse is true by the majorization theory of $L^1$ by Theorem \ref{thm:continuousmajorization}. However, these partial orders are distinct in higher dimensions.

\begin{example}
Arising from an example of Joe and Verducci \cite{JoeVerducci}, define $f,g\in L^1_{\mathbb C^2}([0,1],\mu I)$ by
\[
f = \left[\begin{smallmatrix}1 \\ & 4\end{smallmatrix}\right]\chi_{[0,\frac{1}{2}]} + \left[\begin{smallmatrix}3 \\ & 2\end{smallmatrix}\right]\chi_{(\frac{1}{2},1]}
\quad \textrm{and} \quad g = \left[\begin{smallmatrix}1 \\ & 2\end{smallmatrix}\right]\chi_{[0,\frac{1}{2}]} + \left[\begin{smallmatrix}3 \\ & 4\end{smallmatrix}\right]\chi_{(\frac{1}{2},1]}.
\]
For every state $s\in \mathcal S(\mathbb C^2)$, whose diagonal will be non-negative numbers $a,b$, we have
\begin{align*}
f_s & = (a+4b)\chi_{[0,\frac{1}{2}]} + (3a +2b)\chi_{(\frac{1}{2},1]}, \quad \textrm{and}\\
g_s & = (a+2b)\chi_{[0,\frac{1}{2}]} + (3a +4b)\chi_{(\frac{1}{2},1]}.
\end{align*}
\cite{JoeVerducci} proves that $(a+4b, 3a+2b) \prec (a+2b, 3a+4b)$ as vectors, for all $a,b\geq0$, and so $f\prec_S g$.
However, the same paper also points out that $(1-4, 3-2) = (-3,1)$ is not majorized by $(1-2,3-4) = (-1,-1)$ and thus for $t=\left[\begin{smallmatrix}1 \\ & -1\end{smallmatrix}\right]$ we have that $f_t \nprec g_t$. Therefore, $f\prec_S g$ but $f \nprec_T g$.
\end{example}

\begin{example}
Arising from an example of Malamud \cite{Malamud}, define $f,g\in L^1_{\mathbb C^2}([0,1],\mu I)$ by
\begin{align*}
f & = \left[\begin{smallmatrix}12 \\ & 12\end{smallmatrix}\right]\chi_{[0,\frac{1}{4}]}
+ \left[\begin{smallmatrix}12 \\ & 12\end{smallmatrix}\right]\chi_{(\frac{1}{4}, \frac{1}{2}]}
+ \left[\begin{smallmatrix}5 \\ & 3\end{smallmatrix}\right]\chi_{(\frac{1}{2}, \frac{3}{4}]}
+ \left[\begin{smallmatrix}3 \\ & 5\end{smallmatrix}\right]\chi_{(\frac{3}{4}, 1]}, \quad \textrm{and}
\\ g & = \left[\begin{smallmatrix}8 \\ & 16\end{smallmatrix}\right]\chi_{[0,\frac{1}{4}]}
+ \left[\begin{smallmatrix}16 \\ & 8\end{smallmatrix}\right]\chi_{(\frac{1}{4}, \frac{1}{2}]}
+ \left[\begin{smallmatrix}0 \\ & 0\end{smallmatrix}\right]\chi_{(\frac{1}{2}, \frac{3}{4}]}
+ \left[\begin{smallmatrix}8 \\ & 8\end{smallmatrix}\right]\chi_{(\frac{3}{4}, 1]}.
\end{align*}
Using the same methodology as the previous example, \cite{Malamud} implies that $f_t \prec g_t$ for every $t\in \mathcal T(\mathbb C^2)^{sa}$. However, they also prove that there can be no bistochastic operator taking $g$ to $f$. Therefore, $f \prec_T g$ but $f\nprec g$.
\end{example}

Now to the main theory of this section. 
Recall from the introduction that Komiya \cite{Komiya} proves for $X, Y\in M_{m,n}(\mathbb C)$ that $X \prec Y$ if and only if $\psi(X) \leq \psi(Y)$ for every real-valued, permutation-invariant, convex function $\psi$ on $M_{m,n}(\mathbb C)$. The permutation matrices are significant here because the convex hull of the permutation matrices is the set of bistochastic matrices.

For the measure space $(X,\mu)$ we define $\mathcal P_{\operatorname{inv}}$ to be the set of all invertible measure-preserving transformations. In particular, this is the set of all measurable functions $\phi : X\rightarrow X$, with a measurable inverse, that satisfies the measure-preservation property: 
\[
\mu(\phi^{-1}(E)) = \mu(E), \quad \forall E\in \mathcal O(X).
\]
We use the notation $C_\phi$ to denote the right-composition operator: $C_\phi(f)=f\circ\phi$. If $\phi\in \mathcal P_{\operatorname{inv}}$ then $C_\phi$ is a bistochastic operator.

Brown \cite{Brown} has proved a similar convexity result for bistochastic operators on $L^1$ (though the paper is in the Markov operator context) under some conditions on the measure space. Namely, the convex hull $\operatorname{conv}(C_\phi:\phi\in \mathcal P_{\operatorname{inv}})$ of the composition operators  of invertible measure-preserving maps is dense in the bistochastic operators in the weak operator topology arising from $L^p$ for every $1 < p< \infty$.

\begin{proposition}\label{prop:brown}
Suppose $X$ is a product of unit intervals and $\mu$ is the corresponding product of Lebesgue measures.
If $B$ is a bistochastic operator in $\mathfrak B(L^1(X,\mu))$ then there exists a sequence of bistochastic operators $B_i\in \operatorname{conv}(C_\phi:\phi\in \mathcal P_{\operatorname{inv}})$ 
such that $B_i$ is WOT-convergent to $B$. Moreover, $\mathfrak B(L^1(X,\mu))$ is WOT-compact and convex.
\end{proposition}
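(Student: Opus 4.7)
The plan is to bootstrap Brown's density theorem into the WOT used in this paper. Given $B\in \mathfrak B(L^1(X,\mu))$, $B$ is simultaneously an $L^1$- and $L^\infty$-contraction (by Lemma \ref{lem:bistoinfinitycontractive} and the contractivity proposition), hence by Riesz--Thorin interpolation an $L^p$-contraction for every $1\leq p\leq \infty$; in particular, $B$ is an $L^2$-contraction. Brown's theorem applied at $p=2$ then yields a net in $\operatorname{conv}\{C_\phi:\phi\in\mathcal P_{\operatorname{inv}}\}$ that is WOT-convergent to $B$ in $L^2$. Since $X$ is standard Borel, $L^2(X,\mu)$ is separable, so WOT is metrizable on the closed unit ball of $B(L^2)$, and one may extract a sequence $B_i$ with the same convergence property.

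To transfer this convergence to the WOT of the paper, which for extensions of classical bistochastic operators reduces (via the trace-class pairing for the ultraweak topology on $\mathcal B(\mathcal H)$ and the entry-wise action of $B$) to $\int B_i h\cdot g\,d\mu \to \int Bh\cdot g\,d\mu$ for all $h\in L^1(X,\mu)$ and $g\in L^\infty(X,\mu)$, I would use an $\epsilon/3$ argument. Fix such $h, g$; since $\mu$ is finite, $g\in L^2$, and one can approximate $h$ in $\|\cdot\|_1$ by some $h_n\in L^\infty\subseteq L^2$. Then
\[
\left|\int(B_i h-Bh)g\,d\mu\right|\leq \|g\|_\infty\bigl(\|B_i(h-h_n)\|_1+\|B(h-h_n)\|_1\bigr)+\left|\int(B_i h_n-Bh_n)g\,d\mu\right|,
\]
where the first bracketed term is controlled by $2\|g\|_\infty\|h-h_n\|_1$ via $L^1$-contractivity of $B_i$ and $B$, and the last term vanishes as $i\to \infty$ by the $L^2$-WOT convergence (applied to $h_n, g\in L^2$).

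Convexity of $\mathfrak B(L^1(X,\mu))$ is immediate from linearity of its three defining conditions. For WOT-compactness I would identify each $B\in\mathfrak B(L^1)$ with the doubly-stochastic coupling $\mu_B$ on $X\times X$ defined by $\mu_B(E\times F)=\int_F B(\chi_E)\,d\mu$, under which the paper's WOT corresponds to testing couplings against product functions $f\otimes g$ with $f\in L^1, g\in L^\infty$. The set $\Pi(\mu,\mu)$ of probability measures on $X\times X$ with both marginals equal to $\mu$ is compact in this topology (being closed in the compact space of probability measures on $X\times X$ with prescribed marginals), and the three defining conditions of $\mathfrak B(L^1)$ pass to WOT-limits by testing against $g=1$, against $g\geq 0$, and at the constant function $f=1$, respectively. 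Closedness of the image of $\mathfrak B(L^1)$ in $\Pi(\mu,\mu)$ then gives compactness.

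I expect the main obstacle to lie in the compactness half: making precise the claim that the paper's WOT on $\mathfrak B(L^1)$ matches a topology in which one has built-in compactness, and confirming that the image of bistochastic operators is genuinely closed therein. An alternative route is the adjoint embedding $B\mapsto B^*$ into the point-weak-* unit ball of $B(L^\infty)$, combined with Banach--Alaoglu and Tychonoff, but either way some technical accounting of the test-function class and its relation to the full $L^1$--$L^\infty$ duality is required. By contrast, the density half should be routine once Brown's theorem and the $\epsilon/3$ estimate above are in place.
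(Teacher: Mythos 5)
Your proposal is essentially correct, and its first half follows the same template as the paper: invoke Brown's approximation theorem for the classical $\mathfrak B(L^1(X,\mu))$ and then transfer the convergence to the operator-valued WOT by an $\epsilon/3$ argument. The differences are in the details of the transfer and in the compactness claim. For the transfer, the paper approximates $f\in L^1_\mathcal H(X,\mu I)$ in the state topology by elementary tensors $\sum_j h_jA_j\in L^\infty_\mathcal H$ (Proposition \ref{prop:denseinstatetopology}) and then uses Lemma \ref{lemma:CauchySchwarz} together with the modularity relation $\langle B(fA),gI_\mathcal H\rangle=\langle B(f),g\rangle A$ of Corollary \ref{cor:bracketmodularity}; you instead collapse the operator-valued pairing to the scalar one via $\tr\bigl(s\langle B(f),gI_\mathcal H\rangle\bigr)=\int_X B(f_s)\,g\,d\mu$, which rests on the identity $(Bf)_s=B(f_s)$ valid precisely because $B$ is the entrywise extension of a classical operator --- you should state that identity explicitly, since it is the whole reason the reduction to $h\in L^1(X,\mu)$, $g\in L^\infty(X,\mu)$ is legitimate. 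Your Riesz--Thorin/$L^2$ detour is harmless but unnecessary: since $\mu$ is finite, $L^\infty\subseteq L^p\cap L^{p'}$, so Brown's $L^p$-density already yields $\langle B_ih,g\rangle\to\langle Bh,g\rangle$ for $h,g\in L^\infty$, which is all the $\epsilon/3$ estimate needs (your uniform control $\|B_i(h-h_n)\|_1\leq\|h-h_n\|_1$ is correct, as classical bistochastic operators are $L^1$-contractions). For the second assertion, the paper simply quotes Brown's Theorem 1, which already contains WOT-compactness and convexity of the classical bistochastic operators, and transfers it; your coupling argument via $\Pi(\mu,\mu)$ is a genuinely independent proof of that classical fact, and it does go through --- the step you flag as delicate (upgrading weak-$*$ convergence against $C(X\times X)$ to convergence against $f\otimes g$ with $f\in L^1$, $g\in L^\infty$) is handled by the uniform marginal estimate $\bigl|\int (f-\tilde f)\otimes g\,d\pi_i\bigr|\leq\|g\|_\infty\|f-\tilde f\|_{L^1(\mu)}$, valid for every coupling with first marginal $\mu$ --- but it is more work than the paper undertakes. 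In short: same skeleton for density with a cleaner scalar reduction on your side, and a self-contained (if longer) substitute for the paper's citation on the compactness side.
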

\begin{proof}
$B$ is a bistochastic operator on $L^1(X,\nu)$. As mentioned above, Brown \cite[Theorem 1]{Brown} proves that 
$\operatorname{conv}(C_\phi:\phi\in \mathcal P_{\operatorname{inv}})$ is dense in the bistochastic operators in the WOT-topology, meaning that there is a sequence $B_i\in \operatorname{conv}(C_\phi:\phi\in \mathcal P_{\operatorname{inv}})$ 
such that
\[
\langle B_i h, g \rangle \rightarrow \langle Bh,g\rangle, \quad \forall h,g\in L^\infty(X,\mu).
\]

Now let $f\in L^1_\mathcal H(X,\mu I_\mathcal H), s\in \mathcal S(\mathcal H), 0\neq g\in L^\infty(X,\mu)$ and $\epsilon > 0$. By Proposition \ref{prop:denseinstatetopology}, that $L^\infty_\mathcal H(X,\mu I_\mathcal H) \simeq L^\infty(X,\mu)\ \bar\otimes\ \mathcal B(\mathcal H)$ is dense in $L^1_\mathcal H(X,\mu I_\mathcal H)$ in the state topology, there exists $h = \sum_{j=1}^m h_j A_j$ with $h_j\in L^\infty(X,\mu), 0\neq A_j\in\mathcal B(\mathcal H), 1\leq j\leq m$ such that 
\[
\|(f - h)_s\|_1 < \frac{\epsilon}{9\|g\|_\infty}.
\]

By the above we know that there exists an $N\in \mathbb N$ such that for all $i\geq N$ and $1\leq j\leq m$ we have that
\[
\left|\left\langle \big(B_i-B\big)(h_j), g\right\rangle\right| < \frac{\epsilon}{9m\|A_i\|}.
\]
Hence, for all $i\geq N$, using Lemma \ref{lemma:CauchySchwarz} and Corollary \ref{cor:bracketmodularity},  
\begin{align*}
&\left|\tr\left(s\left\langle \big(B_i - B\big)(f),g I_\mathcal H \right\rangle\right)\right| \\
& \quad \quad \leq \left|\tr\left(s\left\langle \big(B_i - B\big)(f-h), g I_\mathcal H\right\rangle\right)\right| +\left|\tr\left(s\left\langle \big(B_i - B\big)(h), gI_\mathcal H \right\rangle\right)\right|
\\ & \quad \quad\leq 4\left\|(B_i - B)((f-h)_s)\right\|_1\|g\|_\infty + \left\|\sum_{i=1}^m\langle (B_i - B)(h_j)A_j, gI_\mathcal H\rangle \right\|
\\ & \quad \quad\leq 8\|(f-h)_s\|_1\|g\|_\infty + \sum_{j=1}^m \left\|\langle (B_i - B)(h_j), gI_\mathcal H\rangle A_j\right\|
\\ & \quad \quad < \frac{8\epsilon}{9} + \sum_{j=1}^m \frac{\epsilon}{9m} = \epsilon.
\end{align*}
Therefore, $\langle B_i(f), gI_\mathcal H\rangle$ converges to $\langle B(f),gI_\mathcal H\rangle$ in the ultraweak topology of $\mathcal B(\mathcal H)$ for all $f\in L^1_\mathcal H(X,\mu I_\mathcal H)$ and $g\in L^\infty(X,\mu)$.

The last statement of the proposition easily follows by Brown's theorem \cite[Theorem 1]{Brown}, which proves that $\mathfrak B(L^1(X,\mu))$ is convex and compact in the weak topology. This is accomplished by the fact that $\mathfrak B(L^1(X,\mu))$ is a WOT-closed subset of the unit ball. Additionally, the closure of a convex hull will be convex.
\end{proof}

\begin{definition}
A real-valued convex function $\psi : L^1_\mathcal H(X, \mu I) \rightarrow \mathbb R$ is said to be {\em permutation-invariant} if for every 
$ \sigma \in \mathcal P_{\operatorname{inv}}$ we have 
\[
\psi(f\circ\sigma) = \psi(f) \quad \forall f\in L^1_\mathcal H(X, \mu I).
\]
\end{definition}

\begin{theorem}
Suppose $X$ is a product of unit intervals and $\mu$ is the corresponding product of Lebesgue measures.
Let $\tilde f, f\in L^1_\mathcal H(X, \mu I)$. Then $\tilde f \prec f$ if and only if $\psi(\tilde f) \leq \psi(f)$ for every real-valued, weakly-continuous, permutation-invariant, convex function on $L^1_\mathcal H(X, \mu I)$. 
\end{theorem}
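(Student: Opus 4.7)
The plan is to prove both implications by combining the structural information from Proposition \ref{prop:brown} with a Hahn--Banach separation argument.

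\textbf{Forward direction.} Suppose $\tilde f \prec f$, so there exists $B\in\mathfrak B(L^1(X,\mu))$ with $Bf=\tilde f$. By Proposition \ref{prop:brown}, choose convex combinations $B_i=\sum_{j=1}^{n_i}\lambda_{i,j}C_{\phi_{i,j}}$ with $\phi_{i,j}\in \mathcal P_{\operatorname{inv}}$ that are WOT-convergent to $B$; in particular $B_i f$ converges weakly to $\tilde f$ in $L^1_\mathcal H(X,\mu I)$. Permutation invariance gives $\psi(C_{\phi_{i,j}}f)=\psi(f)$ for each $i,j$, and convexity then yields $\psi(B_if)\leq\sum_j\lambda_{i,j}\psi(f)=\psi(f)$. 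Weak continuity of $\psi$ allows us to pass to the limit, giving $\psi(\tilde f)\leq \psi(f)$.

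\textbf{Reverse direction.} I argue the contrapositive. Suppose $\tilde f\not\prec f$, so that $\tilde f\notin K_f:=\{Bf:B\in\mathfrak B(L^1(X,\mu))\}$. By Proposition \ref{prop:brown}, $\mathfrak B(L^1(X,\mu))$ is WOT-compact and convex; since the evaluation map $B\mapsto Bf$ is WOT-to-weak continuous, $K_f$ is weakly compact and convex in $L^1_\mathcal H(X,\mu I)$. The Hahn--Banach separation theorem, applied in the locally convex topology generated by $\mathcal F(X,\mu I)$, produces $L\in\mathcal F(X,\mu I)$ and $c\in\mathbb R$ with
\[
L(\tilde f)>c\geq \sup_{h\in K_f}L(h).
\]
Define $\psi(h)=\sup_{\phi\in\mathcal P_{\operatorname{inv}}}L(C_\phi h)$. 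This $\psi$ is convex (a supremum of linear functionals) and permutation-invariant, since for any $\sigma\in\mathcal P_{\operatorname{inv}}$ the map $\phi\mapsto\sigma\phi$ is a bijection of $\mathcal P_{\operatorname{inv}}$, so $\psi(C_\sigma h)=\sup_\phi L(C_{\sigma\phi}h)=\psi(h)$. Since each $C_\phi f\in K_f$ we have $\psi(f)\leq c$, while $\psi(\tilde f)\geq L(\tilde f)>c$, so $\psi$ separates as desired.

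\textbf{Main obstacle.} The critical difficulty is that $\psi$ as constructed is \emph{a priori} only weakly lower semi-continuous, being a supremum of weakly continuous linear functionals, whereas the theorem as stated requires weakly continuous test functions. I see two routes to reconcile this. First, weak lower semi-continuity is already enough for the forward implication (via $\psi(\tilde f)\leq\liminf_i\psi(B_if)\leq\psi(f)$), so reformulating with ``weakly lower semi-continuous'' in place of ``weakly continuous'' closes the argument immediately. Second, to preserve the statement literally, one should refine the construction of $\psi$---for example by regularizing the supremum against a suitable invariant mean on $\mathcal P_{\operatorname{inv}}$, or by exploiting the WOT-compactness of $\mathfrak B(L^1(X,\mu))$ to reduce the supremum to a carefully chosen finite maximum and then symmetrize. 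The key technical point is controlling the topological regularity of an invariant supremum of continuous functionals; everything else in the argument is forced by Proposition \ref{prop:brown} and Hahn--Banach.
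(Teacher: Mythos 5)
Your forward direction is exactly the paper's argument: permutation invariance plus convexity handles $\operatorname{conv}(C_\phi:\phi\in\mathcal P_{\operatorname{inv}})$, and Proposition \ref{prop:brown} together with weak continuity of $\psi$ handles a general bistochastic $B$. In the reverse direction, however, you have correctly diagnosed a genuine gap in your own construction and then left it open: $\psi(h)=\sup_{\phi\in\mathcal P_{\operatorname{inv}}}L(C_\phi h)$ is a priori only weakly lower semicontinuous, so it is not an admissible test function for the theorem as stated, and neither of your two proposed repairs is actually carried out. The paper closes precisely this gap by enlarging the index set of the supremum: it defines
\[
\psi_\varphi(\cdot)=\sup\{\operatorname{Re}(\varphi\circ B(\cdot)) : B\in\mathfrak B(L^1(X,\mu))\},
\]
the supremum over \emph{all} bistochastic operators rather than over the composition operators alone. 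Permutation invariance survives because $B\mapsto BC_\sigma$ is a bijection of $\mathfrak B(L^1(X,\mu))$; convexity and finiteness are as in your argument (finiteness via Lemma \ref{lemma:CauchySchwarz} and $1$-contractivity of bistochastic operators); and the separation step becomes cleaner, since $\psi_\varphi(f)=\sup_{h\in K_f}\operatorname{Re}(\varphi(h))\leq t<\operatorname{Re}(\varphi(\tilde f))\leq\psi_\varphi(\tilde f)$, the last inequality holding because the identity operator is bistochastic.

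For weak continuity of $\psi_\varphi$ the paper argues that each $B$ is weak-weak continuous for the $\mathcal F(X,\mu I)$-topology, so $h_i\to h$ weakly gives $\operatorname{Re}(\varphi(Bh_i))\to\operatorname{Re}(\varphi(Bh))$ for every $B$, and then passes to the supremum; the intended mechanism behind that last step is the WOT-compactness and convexity of $\mathfrak B(L^1(X,\mu))$ from Proposition \ref{prop:brown}. That compactness is exactly what your index set $\mathcal P_{\operatorname{inv}}$ lacks --- the composition operators are only WOT-dense in $\mathfrak B(L^1(X,\mu))$ after taking convex hulls, not compact --- which is why your version of the obstacle has no easy resolution. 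The concrete fix is therefore to replace $\sup_{\phi}L(C_\phi\,\cdot)$ by $\sup_{B\in\mathfrak B(L^1(X,\mu))}\operatorname{Re}(L(B\,\cdot))$; with that single change the rest of your separation argument goes through and coincides with the paper's proof.
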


\begin{proof}
The following proof is Komiya's argument \cite{Komiya} adapted to our context. 
Let $B\in \operatorname{conv}(C_\phi:\phi\in \mathcal P_{\operatorname{inv}})$ 
Namely, there exist $\sigma_1,\dots, \sigma_n\in P_{\operatorname{inv}}$ and $\lambda_1,\dots,\lambda_n$ positive numbers where $\sum_{i=1}^n \lambda_i = 1$ 
such that $B = \sum_{i=1}^n \lambda_i C_{\sigma_i}$.

For any real-valued, weakly-continuous, permutation-invariant, convex function $\psi$ on $L^1_\mathcal H(X, \mu I)$ we have 
\begin{align*}
    \psi(B(f)) &= \psi\left(\sum_{i=1}^n \lambda_i C_{\sigma_i}(f)\right)
    \\ & \leq \sum_{i=1}^n \lambda_i\psi(C_{\sigma_i}(f))
    \\ & = \sum_{i=1}^n \lambda_i \psi(f)
    \\ & = \psi(f).
\end{align*}

Now suppose that $B$ is an arbitrary bistochastic operator in $\mathfrak B(L^1(X,\mu))$. By Proposition \ref{prop:brown} there exists a sequence $\{B_i\}$ of bistochastic operators in the convex hull described above such that $B_i$ WOT-converges to $B$. 
Therefore, because $\psi(B_i f) \leq \psi(f)$, it follows that since $\psi$ is weakly continuous we have that $\psi(Bf) \leq \psi(f)$.

For the converse, assume that $\psi(\tilde f) \leq \psi(f)$ for all real-valued, weakly-continuous, permutation-invariant, convex function $\psi$. For every $s_1,\dots, s_m \in \mathcal S(\mathcal H)$ and $g_1,\dots, g_m \in L^\infty(X,\mu)$ we know that $\varphi(\cdot) = \sum_{i=1}^m \tr(s_i\langle \cdot,g_i\rangle)$ is an arbitrary element of $\mathcal F(X,\mu I_\mathcal H)$.
Now consider the function
\[
\psi_{\varphi}(\cdot) = \sup\{Re(\varphi\circ B(\cdot)) : B \in \mathfrak{B}(L^1(X,\mu))\}.
\]
By Lemma \ref{lemma:CauchySchwarz} 
\begin{align*}
|Re(\varphi\circ B(h))| & \leq \left|\sum_{i=1}^m \tr(s_i\langle B(h),g_i I_\mathcal H\rangle) \right|
\\ & \leq \sum_{i=1}^m |(\tr(s_i\langle B(h),g_iI_\mathcal H\rangle))| 
\\ & \leq \sum_{i=1}^m \|\langle B(h), g_i\rangle\|
\\ & \leq \sum_{i=1}^m 4\|B(h)\|_1\|g_i\|_\infty 
\\ & \leq \|h\|_1\left(4\sum_{i=1}^m \|g_i\|_\infty\right)
\end{align*}
and so $\psi_{\varphi}(h)$ exists for every $h\in L^1_\mathcal H(X,\mu I_\mathcal H)$ since it is the supremum of a bounded set of real numbers. 

Now suppose $h_i\in L^1_\mathcal H(X,\mu I_\mathcal H)$ weakly converges to $h$. Since $B\in \mathfrak B(L^1(X,\mu))$ is norm continuous then it is also weak-weak continuous in the $\mathcal F(X,\mu I_\mathcal H)$-topology. Thus, $B(h_i)$ weakly converges to $B(h)$ and $\psi_{\varphi}$ is then weakly continuous.

For any $\sigma\in \mathcal P_{\operatorname{inv}}$ we have that the map $B\mapsto BC_\sigma$ is a bijection on $\mathfrak B(L^1(X,\mu))$. Hence,
\begin{align*}
\psi_{\varphi}(C_\sigma(h)) & = \sup\{Re(\varphi\circ B(C_\sigma(h))) : B \in \mathfrak{B}(L^1(X,\mu))\}
\\ & = \psi_{\varphi}(h)
\end{align*}
and $\psi_{\varphi}$ is permutation invariant.

Lastly, $\psi_\varphi$, being the supremum of a family of linear functions, is sublinear and so is convex. Thus, by assumption we have that $\psi_{\varphi}(\tilde f) \leq \psi_{\varphi}(f)$ for every $\varphi \in \mathcal F(X,\mu I_\mathcal H)$.

By contradiction, assume that $\tilde f\neq B(f)$ for every choice of $B\in \mathfrak{B}(L^1(X,\mu))$. From the last proposition we know that $\{B(f) : B\in \mathfrak B(L^1(X,\mu))\}$ is weakly compact in the $\mathcal F(X,\mu I_\mathcal H)$-topology as well as convex. Hence, the Hahn-Banach Separation Theorem implies that there exists $\varphi \in \mathcal F(X,\mu I_\mathcal H)$ and $t\in \mathbb R$ such that
\[
Re(\varphi(f)) > t > Re(\varphi(B(f))), \ \forall B\in \mathfrak{B}(L^1(X,\mu)).
\]
Therefore,
\[
\psi_{\varphi}(\tilde f) \geq Re(\varphi(\tilde f)) > \psi_{\varphi}(f),
\]
a contradiction.
\end{proof}

\section*{Acknowledgements}
 S.P.\ was supported by NSERC Discovery Grant number 1174582, the Canada Foundation for Innovation (CFI) grant number 35711, and the Canada Research Chairs (CRC) Program grant number 231250. C.R. was supported by NSERC Discovery Grant 2019-05430. The authors would like to thank the anonymous reviewer for their helpful comments and suggestions.


\begin{bibdiv}
\begin{biblist}

\bib{BK}{article}{
      author={Bhatia, Rajendra},
      author={Kittaneh, Fuad},
       title={The matrix arithmetic-geometric mean inequality revisited},
        date={2008},
     journal={Linear Algebra and Its Applications},
      volume={428},
      number={8-9},
       pages={2177\ndash 2191},
       review={\MR{2401646}},
   doi={10.1016/j.laa.2007.11.030},
}

\bib{Brown}{article}{
      author={Brown, James},
       title={Approximation theorems for Markov operators},
        date={1966},
     journal={Pacific Journal of Mathematics},
      volume={16},
      number={1},
       pages={13\ndash 23},
       review={\MR{192552}},
}

\bib{BuschOp}{book}{
      author={Busch, Paul},
      author={Grabowski, Marian},
      author={Lahti, Pekka~J},
       title={Operational quantum physics},
   publisher={Springer Science \& Business Media},
        date={1997},
      volume={31},
      review={\MR{1356220}},
}

\bib{BuschQT}{book}{
      author={Busch, Paul},
      author={Lahti, Pekka~J},
      author={Mittelstaedt, Peter},
       title={The quantum theory of measurement},
   publisher={Springer Science \& Business Media},
        date={1996},
      volume={2},
      review={\MR{1419313}},
}

\bib{chiri2007}{article}{
      author={Chiribella, Giulio},
      author={D'Ariano, Giacomo~Mauro},
      author={Schlingemann, Dirk},
       title={How continuous quantum measurements in finite dimensions are
  actually discrete},
        date={2007},
     journal={Physical review letters},
      volume={98},
      number={19},
       pages={190403},
       review={\MR{2308924}},
   doi={10.1103/PhysRevLett.98.190403},
}

\bib{chiri2010}{article}{
      author={Chiribella, Giulio},
      author={D'Ariano, Giacomo~Mauro},
      author={Schlingemann, Dirk},
       title={Barycentric decomposition of quantum measurements in finite
  dimensions},
        date={2010},
     journal={Journal of mathematical physics},
      volume={51},
      number={2},
       pages={022111},
       review={\MR{2605024}},
   doi={10.1063/1.3298681},
}

\bib{Chong}{article}{
      author={Chong, Kong~Ming},
       title={Some extensions of a theorem of {H}ardy, {L}ittlewood and
  {P}{\'o}lya and their applications},
        date={1974},
     journal={Canad. J. Math},
      volume={26},
       pages={1321\ndash 1340},
       review={\MR{352377}},
   doi={10.4153/CJM-1974-126-1},
}

\bib{Davies}{book}{
      author={Davies, Edward~Brian},
       title={Quantum theory of open systems},
   publisher={Academic Press London},
        date={1976},
        review={\MR{0489429}},
}

\bib{Day}{article}{
      author={Day, Peter~W},
       title={Decreasing rearrangements and doubly stochastic operators},
        date={1973},
     journal={Transactions of the American Mathematical Society},
      volume={178},
       pages={383\ndash 392},
       review={\MR{318962}},
   doi={10.2307/1996707},
}

\bib{clean4}{article}{
      author={Farenick, Douglas},
      author={Floricel, Remus},
      author={Plosker, Sarah},
       title={Approximately clean quantum probability measures},
        date={2013},
     journal={Journal of Mathematical Physics},
      volume={54},
      number={5},
       pages={052201},
       review={\MR{3098922}},
   doi={10.1063/1.4803682},
}

\bib{FK}{article}{
      author={Farenick, Douglas},
      author={Kozdron, Michael~J},
       title={Conditional expectation and {B}ayes' rule for quantum random
  variables and positive operator valued measures},
        date={2012},
     journal={Journal of Mathematical Physics},
      volume={53},
      number={4},
       pages={042201},
       review={\MR{2953266}},
   doi={10.1063/1.3703069},
}

\bib{FKP}{article}{
      author={Farenick, Douglas},
      author={Kozdron, Michael~J},
      author={Plosker, Sarah},
       title={Spectra and variance of quantum random variables},
        date={2016},
     journal={Journal of Mathematical Analysis and Applications},
      volume={434},
      number={2},
       pages={1106\ndash 1122},
       review={\MR{3415711}},
   doi={10.1016/j.jmaa.2015.09.055},
}

\bib{FPS}{article}{
      author={Farenick, Douglas},
      author={Plosker, Sarah},
      author={Smith, Jerrod},
       title={Classical and nonclassical randomness in quantum measurements},
        date={2011},
     journal={Journal of Mathematical Physics},
      volume={52},
      number={12},
       pages={122204},
       review={\MR{2907638}},
   doi={10.1063/1.3668081},
}

\bib{Larson}{article}{
      author={Han, Deguang},
      author={Larson, David},
      author={Liu, Bei},
      author={Liu, Rui},
       title={Operator-valued measures, dilations, and the theory of frames},
        date={2014},
        ISSN={0065-9266},
     journal={Mem. Amer. Math. Soc.},
      volume={229},
      number={1075},
       pages={viii+84},
      review={\MR{3186831}},
}

\bib{Haagerup}{article}{
   author={Haagerup, Uffe},
   title={$L\sp{p}$-spaces associated with an arbitrary von Neumann algebra},
   language={English, with French summary},
   conference={
      title={Alg\`ebres d'op\'{e}rateurs et leurs applications en physique
      math\'{e}matique},
      address={Proc. Colloq., Marseille},
      date={1977},
   },
   book={
      series={Colloq. Internat. CNRS},
      volume={274},
      publisher={CNRS, Paris},
   },
   date={1979},
   pages={175--184},
   review={\MR{560633}},
}

\bib{HLP}{article}{
      author={Hardy, Godfrey~H},
      author={Littlewood, John~E},
      author={P{\'o}lya, Gyorgy},
       title={Some simple inequalities satisfied by convex functions},
        date={1929},
     journal={Messenger Math},
      volume={58},
      number={145-152},
       pages={310},
}

\bib{Hiai}{article}{
   author={Hiai, Fumio},
   title={Log-majorizations and norm inequalities for exponential operators},
   conference={
      title={Linear operators},
      address={Warsaw},
      date={1994},
   },
   book={
      series={Banach Center Publ.},
      volume={38},
      publisher={Polish Acad. Sci. Inst. Math., Warsaw},
   },
   date={1997},
   pages={119--181},
   review={\MR{1457004}},
}

\bib{Holevo}{book}{
      author={Holevo, Alexander~S},
       title={Statistical structure of quantum theory},
   publisher={Springer Science \& Business Media},
        date={2003},
      volume={67},
      review={\MR{1889193}},
   doi={10.1007/3-540-44998-1},
}

\bib{JoeVerducci}{article}{
      author={Joe, Harry},
      author={Verducci, Joseph},
       title={Multivariate majorization by positive combinations},
        date={1992},
     journal={Lecture Notes-Monograph Series},
       pages={159\ndash 181},
       review={\MR{1228063}},
   doi={10.1214/lnms/1215461950},
}

\bib{Komiya}{article}{
      author={Komiya, Hidetoshi},
       title={Necessary and sufficient conditions for multivariate
  majorization},
        date={1983},
     journal={Linear algebra and its applications},
      volume={55},
       pages={147\ndash 154},
       review={\MR{719870}},
   doi={10.1016/0024-3795(83)90173-8},
}

\bib{Kosaki}{article}{
   author={Kosaki, Hideki},
   title={An inequality of Araki-Lieb-Thirring (von Neumann algebra case)},
   journal={Proc. Amer. Math. Soc.},
   volume={114},
   date={1992},
   number={2},
   pages={477--481},
   issn={0002-9939},
   review={\MR{1065951}},
   doi={10.2307/2159671},
}

\bib{Malamud}{article}{
      author={Malamud, S.~M.},
       title={Inverse spectral problem for normal matrices and the
  {G}auss-{L}ucas theorem},
        date={2005},
        ISSN={0002-9947},
     journal={Trans. Amer. Math. Soc.},
      volume={357},
      number={10},
       pages={4043\ndash 4064},
      review={\MR{2159699}},
   doi={10.1090/S0002-9947-04-03649-9},
}

\bib{marshallolkin}{book}{
      author={Marshall, AW},
      author={Olkin, I},
      author={Arnold, BC},
       title={Inequalities: Theory of majorization and its applications},
   publisher={Springer-Verlag, New York},
        date={2011},
        review={\MR{2759813}},
   doi={10.1007/978-0-387-68276-1},
   }

\bib{MPR}{article}{
     author={McLaren, Darian},
   author={Plosker, Sarah},
   author={Ramsey, Christopher},
   title={On operator valued measures},
   journal={Houston J. Math.},
   volume={46},
   date={2020},
   number={1},
   pages={201--226},
   review={\MR{4137283}},
}

\bib{Paulsen}{book}{
      author={Paulsen, Vern},
       title={Completely bounded maps and operator algebras},
      series={Cambridge Studies in Advanced Mathematics},
   publisher={Cambridge University Press, Cambridge},
        date={2002},
      volume={78},
      review={\MR{1976867}},
}

\bib{PRLyapunov}{article}{
      author={Plosker, Sarah},
      author={Ramsey, Christopher},
       title={An operator-valued {L}yapunov theorem},
        date={2019},
     journal={Journal of Mathematical Analysis and Applications},
      volume={469},
      number={1},
       pages={117\ndash 125},
        review={\MR{3857513}},
   doi={10.1016/j.jmaa.2018.09.003},
}

\bib{pol}{article}{
      author={Polterovich, Leonid},
       title={Symplectic geometry of quantum noise},
        date={2014},
     journal={Communications in Mathematical Physics},
      volume={327},
      number={2},
       pages={481\ndash 519},
         review={\MR{3183407}},
   doi={10.1007/s00220-014-1937-9},
}

\bib{Ryff63}{article}{
      author={Ryff, John},
       title={On the representation of doubly stochastic operators},
        date={1963},
     journal={Pacific Journal of Mathematics},
      volume={13},
      number={4},
       pages={1379\ndash 1386},
         review={\MR{163171}},
}

\bib{Ryff1965}{article}{
      author={Ryff, John~V},
       title={Orbits of $l^1$-functions under doubly stochastic
  transformation},
        date={1965},
     journal={Transactions of the American Mathematical Society},
      volume={117},
       pages={92\ndash 100},
       review={\MR{209866}},
   doi={10.2307/1994198},
   }

\end{biblist}
\end{bibdiv}

\end{document}